\newtheorem{thm}{Theorem}[section]
\newtheorem{theorem}[thm]{Theorem}
\newtheorem{lemma}[thm]{Lemma}
\newtheorem{claim}[thm]{Claim}
\newtheorem{corollary}[thm]{Corollary}
\newtheorem{conjecture}[thm]{Conjecture}
\theoremstyle{definition}
\newtheorem{example}{Example}
\DeclareMathOperator{\rank}{rank}
\newcommand{\lin}{\mathrm{lin}}
\newcommand{\aff}{\mathrm{aff}}
\newcommand{\ant}{\mathrm{Ast}}
\newcommand{\sm}{\setminus}
\newcommand{\cT}{{\cal T}}
\newcommand{\cS}{{\cal S}}
\newcommand{\cU}{{\cal U}}
\newcommand{\cP}{{\cal P}}
\newcommand{\cR}{{\cal R}}
\newcommand{\cV}{{\cal V}}
\newcommand{\cW}{{\cal W}}
\newcommand{\cB}{{\cal B}}
\newcommand{\Z}{{\mathbb Z}}
\newcommand{\R}{{\mathbb R}}
\newcommand{\csir}[1]{$\mathbb Z_2$-irreducible #1-cycle}
\title{Rigidity of Symmetric Simplicial Complexes and the Lower Bound Theorem}
\author{James Cruickshank, Bill Jackson, Shinichi Tanigawa}
\begin{document}

\maketitle

\begin{abstract} 
We show that, if $\Gamma$ is a point group of $\mathbb{R}^{k+1}$ of order two for some $k\geq 2$ and $\cS$ is  $k$-pseudomanifold which has a free automorphism of order two, then either $\cS$ has a $\Gamma$-symmetric infinitesimally rigid realisation in $\R^{k+1}$ or $k=2$ and $\Gamma$ is a half-turn rotation group.
This verifies a conjecture made by Klee, Nevo, Novik and Zhang for the case when $\Gamma$ is a point-inversion group. Our result implies that
Stanley's lower bound theorem for centrally symmetric polytopes extends to pseudomanifolds with a free simplicial involution, thus verifying (the inequality part) of another conjecture of  Klee, Nevo, Novik and Zheng.
Both results actually apply to a much larger class of simplicial complexes, namely the circuits of the simplicial matroid.  The proof of our rigidity result adapts earlier ideas of Fogelsanger to the setting of symmetric simplicial complexes.

\medskip \noindent {\bf Keywords:} infinitesimal rigidity, pseudomanifold, symmetric simplicial complex, lower bound theorem

\end{abstract}

\section{Introduction}
Let $\cS$ be a pure $k$-dimensional abstract simplicial complex with vertex set $V(\cS)$ and edge set $E(\cS)$.
The lower bound theorem concerns the invariant $g_2(\cS)$ defined by
\begin{equation}
    \label{eqn_g2}
    g_2(\cS) =  |E(\cS)| -(k+1)|V(\cS)| + \binom{k+2}2. 
\end{equation}
Barnette~\cite{B73} showed that 
$g_2(\cS) \geq 0$ if $\cS$ is the boundary complex of a $(k+1)$-dimensional
convex polytope.
This was later generalised to simplicial spheres (Stanley \cite{stanley_g_theorem}) and pseudomanifolds 
(Kalai \cite{kalai}, Tay \cite{tay}). 

Readers familiar with the rigidity theory of bar-joint frameworks will note that the
right hand side of (\ref{eqn_g2}) arises naturally in that theory and indeed is nonnegative
if the 1-skeleton of $\cS$ is a generically rigid graph in $\mathbb R^{k+1}$.
This connection between rigidity theory and polytopal combinatorics, noted by Kalai \cite{kalai} and 
Gromov \cite{Gromov}, has been fundamental to much of the 
work on lower bound theorems for various classes of simplicial complexes. 

Around the same time as \cite{kalai}, Fogelsanger proved that the 1-skeleton of 
a minimal homology $k$-cycle is generically rigid in $\mathbb R^{k+1}$ for $k\geq 2$ \cite{fog}. It is not 
difficult to see that a $k$-pseudomanifold is a minimal homology $k$-cycle in the sense of Fogelsanger. Thus Fogelsanger's result provides a generalisation and independent proof of lower bound theorem 
for pseudomanifolds.  Fogelsanger's proof technique is 
different to the previous  proofs of the lower bound theorem: it is a direct proof based on rigidity lemmas for edge contraction, vertex splitting 
and an ingenious decomposition result (more on this below). 

In the present paper we consider $\mathbb{Z}_2$-symmetric simplicial complexes i.e. simplicial complexes $\cS$ with  a free  
automorphism of order two. Such complexes were referred to as centrally symmetric simplicial complexes in \cite{KNNZ}. The term $\mathbb{Z}_2$-symmetric is more suitable for our purposes since we will consider realisations of complexes in $\R^{k+1}$ which have an arbitrary symmetry of order two, not just a point inversion through the origin.

Note that if $P$ is a centrally symmetric simplicial polytope in $\mathbb R^{k+1}$, that is,  $-P = P$, then the boundary complex of $P$ is a $\mathbb{Z}_2$-symmetric simplicial complex of dimension $k$.
Stanley \cite{Stanleycs} showed that if $\cS$ is the boundary complex of a centrally symmetric simplicial $(k+1)$-polytope for some $k \geq 3$, then 
\begin{equation}
    \label{eqn_cs_lb}
    g_2(\cS) \geq \binom{k+1}2 - (k+1)
\end{equation}
Later Sanyal, Werner and Ziegler \cite{SWZ} used the rigidity-based approach to obtain further properties of the $f$-vector of centrally symmetric polytopes. More recently Klee, Nevo, Novik and Zheng \cite{KNNZ} 
used techniques from rigidity theory to characterise the cases when equality can hold in Stanley's Theorem.
While centrally symmetric simplicial polytopes have been the topic of much research since Stanley's paper, the extension of this theory  from simplicial polytopes to simplicial complexes is less developed than in the non-symmetric case.
One significant result in that direction is the recent
proof by Novik and Zheng of the $\mathbb Z_2$-symmetric upper bound conjecture for simplicial spheres \cite{novik2020highly}.
In this paper we address the lower bound conjecture for a larger class of $\mathbb Z_2$-symmetric simplicial complexes, namely the circuits of the simplicial
matroid. Precise definitions will be given in Section \ref{sec_bk}, for now it suffices to note that a simplicial $k$-circuit is a minimal homology $k$-cycle over $\mathbb Z_2$ in Fogelsanger's terminology. 

Our approach is to extend Fogelsanger's rigidity theorem and proof 
techniques to the $\mathbb{Z}_2$-symmetric setting. 
The extension is not straightforward and, in particular, makes use of a new notion of frameworks with partial symmetry which we have not seen in the rigidity literature. 
Our extension of Fogelsanger's decomposition technique is powerful enough to show that the graph of any $\mathbb{Z}_2$-symmetric simplicial $k$-circuit can be realised as an infinitesimally rigid framework having a specified point group symmetry of order two in $\mathbb{R}^{k+1}$, unless $d=2$ and the point group is the half-turn rotation group in $\mathbb{R}^3$. 

We shall see that when $k=2$ and the point group is the half-turn rotation group in $\mathbb{R}^3$, every symmetric realisation of a $\mathbb{Z}_2$-symmetric planar graph is infinitesimally flexible.
The smallest such example is the famous Bricard octahedron~\cite{bricard1897memoire}, given in  Figure~\ref{fig:bricard}. 
The Bricard octahedron was the source of inspiration for Connelly's flexible polytope.
More generally, the rigidity  of  symmetric and non-convex realisations of 1-skeletons of polytopes has been one of the central topics in rigidity theory, see, e.g.,~\cite{servatius18}.
We believe that our rigidity theorem will have a substantial impact in rigidity theory because it gives the first 
extension of rigidity results on symmetric convex polytopes to a more general family including non-convex symmetric realisations of simplicial complexes.

\begin{figure}
\centering
\includegraphics[scale=0.6]{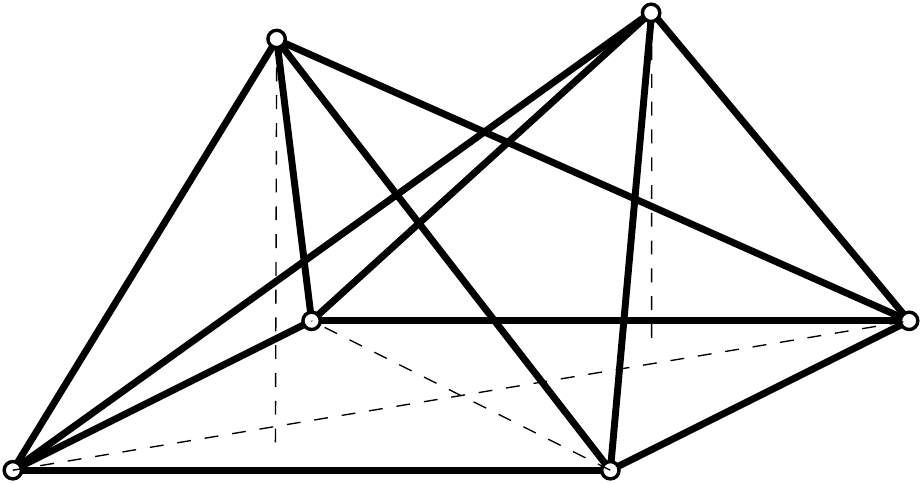}
\caption{The Bricard octahedron.}
\label{fig:bricard}
\end{figure}

The paper is organised as follows.
We first give preliminary facts on the infinitesimal rigidity of frameworks under a point group symmetry in Section 
\ref{sec_pre_rigidity}.
In Section~\ref{sec_bk}, we review Fogelsanger's decomposition 
technique for simplicial circuits using
an approach from \cite{CJT1}.
We then provide an extension of Fogelsanger's rigidity theorem 
to realizations of simplicial $k$-circuits in which several pairs of vertices are constrained to lie  symmetrically in $\R^{k+1}$.
In Section~\ref{sec_cs_comp}, we prove our main theorem on the rigidity of $\mathbb{Z}_2$-symmetric simplicial circuits.
The application to the lower bound theorem is given in Section~\ref{sec_lbt}.
We close the paper by giving some final remarks and open problems in Section~\ref{sec_conclusion}.

\section{Preliminaries on Rigidity and Symmetry}\label{sec_pre_rigidity}
In this section we introduce some basic  results on the infinitesimal rigidity of frameworks, and their extensions to the symmetric case.

Throughout the paper we use the following basic notation.
For a graph $G$, the vertex set  and the edge  set are denoted by $V(G)$ and $E(G)$, respectively.
For $X\subseteq V(G)$, let $E_G[X]=\{uv\in E(G): u, v\in X\}$.
The subgraph of $G$ {\em induced} by $X$ is given by $G[X]=(X, E_G[X])$.
For $v\in V(G)$, $N_G(v)$ denotes the set of vertices adjacent to $v$ in $G$.
\subsection{Infinitesimal rigidity}

A graph drawn in Euclidean space with straight line edges is called a {\em (bar-joint) framework}
and denoted by a pair $(G,p)$ of the graph $G$ and the point configuration $p: V(G)\rightarrow \mathbb{R}^d$. We will also refer to  $(G,p)$ as a {\em realisation} of $G$ in $\R^d$.

An {\em infinitesimal motion} of a framework $(G,p)$ is a map $\dot{p}: V(G)\rightarrow \mathbb{R}^d$ satisfying 
\begin{equation}\label{eq:inf}
(p(i)-p(j))\cdot (\dot{p}(i)-\dot{p}(j))=0\qquad (ij\in E(G)).
\end{equation}
For a skew-symmetric matrix $S$ and $t\in \mathbb{R}^d$, 
it is not difficult to check that $\dot{p}$ defined by $\dot{p}(i)=Sp(i)+t\ (i\in V(G))$ is an infinitesimal motion of $(G,p)$.
Such an infinitesimal motion is called {\em trivial}.
The framework $(G,p)$ is {\em infinitesimally rigid} if every infinitesimal motion of $(G,p)$ is trivial.
We say that the graph $G$ is {\em rigid} in $\mathbb{R}^d$ if $G$ has an  infinitesimally rigid realisation in $\mathbb{R}^d$.

Since (\ref{eq:inf}) is a system of linear equations in $\dot{p}$, we can represent it by a matrix of size $|E(G)|\times d|V(G)|$.
This matrix is called the {\em rigidity matrix} $R(G,p)$ of $(G,p)$.
When the affine span of $p(V(G))$ has dimension at least $d-1$, the dimension of the space of trivial motions is $\binom{d+1}{2}$, so
$(G,p)$ is infinitesimally rigid if and only if $\rank R(G,p)=d|V(G)|-\binom{d+1}{2}$. 
In particular, if $(G,p)$ is infinitesimally rigid, then 
\begin{equation}\label{eq:rigidity_bound}
|E(G)|-d|V(G)|+\binom{d+1}{2}\geq 0.
\end{equation}
Kalai \cite{kalai} and Gromov \cite{Gromov} exploited the close relationship between (\ref{eqn_g2}) and  (\ref{eq:rigidity_bound})  to use rigidity theory to prove their extensions of the Lower Bound Theorem.
We will use a similar approach to obtain our lower bound theorem for $\Z_2$-symmetric simplicial complexes.

\subsection{Rigidity under symmetry}
A {\em graph with a vertex pairing} is a pair $(G,\ast)$ of a graph $G$ and a free  involution $\ast$ acting on some  $X\subseteq V(G)$. (Thus $\ast: X\rightarrow X$ and satisfies  $\ast(\ast(u))=u$ and $\ast(u)\neq u$ for all $u\in X$.)
We will denote $\ast(u)$ by $u^*$ for each $u\in X$ and put $Y^*=\{u^*: u\in Y\}$ for all $Y\subseteq X$. In addition, for each  
$W \subset V(G)$ we put
\[ X_W= (X \cap W) \cap (X \cap W)^*.\] Then $\ast$ induces a free involution $\ast_W: X_W \rightarrow X_W$.
We will simply denote $\ast_W$ by $\ast$ when it is clear from the context.
Similarly, for a subgraph $H$ of $G$, $(H,\ast_{V(H)})$ is simply denoted by $(H,\ast)$.

We will use the free involution $\ast$ to force a  symmetry on the point configuration  in a possible realisation of $G$.
Let $\Gamma$ be a point group of $\mathbb{R}^d$ of order two, that is, a subgroup of $O(d)$ of order two.
A {\em $\Gamma$-framework} is a triple $(G,\ast,p)$ of a graph $G$, a free involution $\ast: X\rightarrow X$,
and a point-configuration $p$ such that $p(u^*)=\gamma (p(u))$ for all $u\in X$,
where $\gamma$ is the non-identity element in $\Gamma$.
By ignoring $\ast$, $(G,\ast,p)$ can be considered as  a realisation of $G$, 
and hence we can apply the above terminology for frameworks to $(G,\ast,p)$.
We will sometimes refer to a  $\Gamma$-framework $(G,\ast)$ as a {\em $\Gamma$-symmetric realisation} of $(G,\ast)$.
We say that $(G,\ast)$ is {\em $\Gamma$-rigid} if $(G,\ast)$ has an infinitesimally rigid $\Gamma$-symmetric realisation.

A $\Gamma$-framework $(G,\ast,p)$ is {\em generic} if the transcendence
degree of the set of coordinates of all the points in $p(V)$ over $\mathbb Q$ takes the maximum possible value  $d(|V| - |X|/2)$. Thus $(G,\ast)$ is $\Gamma$-rigid if and only if every (or equivalently, some) generic $\Gamma$-symmetric realisation of $(G,\ast)$ is infinitesimally rigid. 

\begin{example} This example illustrates that the
$\Gamma$-rigidity of a fixed $(G,\ast)$ may depend on the group $\Gamma$.
Consider the case when $d=3$. Then there are three different types of point groups of order two corresponding to point inversion, rotation, and reflection. Suppose that $G$ is the graph of the octahedron and $\ast$ maps each vertex to its antipodal vertex.
If $\Gamma$ is a rotation group, then any $\Gamma$-framework $(G,\ast,p)$ 
is the 1-skeleton of a Bricard octahedron, which is 
infinitesimally flexible, see Example \ref{ex:2} below. On the other hand, we will show that
every generic $\Gamma$-framework $(G,\ast,p)$ is infinitesimally rigid when $\Gamma$ is  a point inversion or  reflection group. 
\end{example}

We will concentrate our attention on graphs $G$ with a vertex pairing $*:X \rightarrow X$ 
with the property that  
$xx^* \not\in E$ for all $x\in X$. We will refer to such a vertex pairing as a {\em non-adjacent vertex pairing}.
Note that, if $\ast: X \rightarrow X$ is a non-adjacent vertex pairing of $G$,
then the induced bijection  $*_W: X_W \rightarrow X_W$
 is again a non-adjacent vertex pairing of $G[W]$ for all $W\subseteq V(G)$.

We emphasise that a non-adjacent vertex pairing $*:X\rightarrow X$ need not,
in general, induce an involution on the edge set $E[X]$. 
Our main concern, however, is the special case  when $X=V(G)$ and $\ast$ is an automorphism of $G$ without fixed edges.
In this case $(G,\ast)$ is said to be {\em a $\mathbb{Z}_2$-symmetric graph}.
Since $\ast$ is an automorphism, $\ast(e)$ is well defined and we abbreviate $\ast(e)$ by $e^*$ for each edge $e$. 
We consider the more general class of graphs with a non-adjacent vertex pairing because it arises naturally in our analysis of the $\Z_2$-symmetric case. 

We next derive a stronger inequality than (\ref{eq:rigidity_bound}) for 
the number of edges in a $\Gamma$-rigid realisation of a 
$\mathbb{Z}_2$-symmetric graph.
For integers $d \geq 1$ and $0 \leq t \leq d$ let $I_{t,d}$ be the
the diagonal matrix of size $d$ whose first $t$ diagonal entries are 1 and remaining $d-t$ diagonal entries are $-1$.
Let $\Gamma_{t,d}$ be the point group generated by $I_{t,d}$.
Observe that if $\Gamma$ is a point group of order 2 in $\mathbb R^d$
then we can always choose a coordinate system so that $\Gamma = \Gamma_{t,d}$
for some $0\leq t \leq d-1$. 

\begin{lemma}\label{lem:symmetric_maxwell}
Suppose that $0 \leq t \leq d-1$ and let $\Gamma=\Gamma_{t,d}$.
If a $\mathbb{Z}_2$-symmetric graph $(G,\ast)$ is $\Gamma$-rigid and $|V(G)|\geq 2d$, then
\begin{equation}\label{eq:symmetric_maxwell}
|E(G)|\geq d|V(G)|-2\min\left\{\binom{t+1}{2}+\binom{d-t}{2}, \binom{d+1}{2} - \binom{t+1}{2}-\binom{d-t}{2}\right\}.
\end{equation}
\end{lemma}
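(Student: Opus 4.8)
The plan is to exploit the $\Z_2$-action to block-decompose the rigidity matrix. Since $(G,\ast)$ is $\Gamma$-rigid, by the remark following the definition of $\Gamma$-rigidity we may take a \emph{generic} $\Gamma$-symmetric realisation $(G,\ast,p)$, which is then infinitesimally rigid. First I would check that the affine span of $p(V(G))$ has dimension $d$: the $|V(G)|/2\ge d$ orbit representatives have algebraically independent coordinates, and $\gamma:=I_{t,d}$ has at least one eigenvalue $-1$ because $t\le d-1$, so the $|V(G)|$ points of $p(V(G))$ do not all lie on a common affine hyperplane for at least one — hence for a generic — symmetric configuration. Consequently $\ker R(G,p)$ equals the space $T$ of trivial infinitesimal motions, and $\dim T=\binom{d+1}{2}$.

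Next, let $\sigma$ be the nontrivial element of $\Z_2$, acting on $\R^{d|V(G)|}$ by $(\sigma\cdot\dot p)(v)=\gamma(\dot p(v^*))$ and on $\R^{E(G)}$ by $(\sigma\cdot z)_e=z_{e^*}$; note that $\ast$ acts freely on $E(G)$, since a fixed edge would have to be of the form $vv^*$, which is excluded. Using that $\gamma\in O(d)$ and $p(v^*)=\gamma p(v)$, a direct computation gives the intertwining relation $\sigma\cdot(R(G,p)\dot p)=R(G,p)(\sigma\cdot\dot p)$. Writing $\R^{d|V(G)|}=V_+\oplus V_-$ and $\R^{E(G)}=W_+\oplus W_-$ for the $(\pm1)$-eigenspace decompositions, this relation forces $R(G,p)(V_\pm)\subseteq W_\pm$, so that $\rank R(G,p)=\rank R|_{V_+}+\rank R|_{V_-}$; moreover $T$ is $\sigma$-invariant, so $\ker(R|_{V_\pm})=T\cap V_\pm=:T_\pm$. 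Since $\ast$ is a free involution on both $V(G)$ and $E(G)$, one reads off $\dim V_\pm=\tfrac d2|V(G)|$ and $\dim W_\pm=\tfrac12|E(G)|$, whence, for each choice of sign,
\[\tfrac12|E(G)|=\dim W_\pm\;\ge\;\rank R|_{V_\pm}=\dim V_\pm-\dim T_\pm=\tfrac d2|V(G)|-\dim T_\pm.\]
Rearranging and taking the stronger of the two resulting bounds gives $|E(G)|\ge d|V(G)|-2\min\{\dim T_+,\dim T_-\}$.

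Finally I would compute $\dim T_\pm$. Parametrise $T$ by pairs $(S,w)$, where $S$ is a $d\times d$ skew-symmetric matrix and $w\in\R^d$, via $\dot p(v)=Sp(v)+w$; this is a bijection onto $T$ because the affine span of $p(V(G))$ is full-dimensional. Under this parametrisation the $\sigma$-action becomes $(S,w)\mapsto(\gamma S\gamma,\gamma w)$. Since $\gamma=I_{t,d}$, conjugation by $\gamma$ fixes the skew-symmetric matrices supported on the two diagonal blocks of sizes $t$ and $d-t$ and negates those supported on the off-diagonal block, while $\gamma$ fixes the first $t$ coordinates of $w$ and negates the last $d-t$; hence
\[\dim T_+=\binom t2+\binom{d-t}2+t=\binom{t+1}2+\binom{d-t}2,\qquad \dim T_-=t(d-t)+(d-t)=\binom{d+1}2-\binom{t+1}2-\binom{d-t}2.\]
Substituting into the displayed inequality yields (\ref{eq:symmetric_maxwell}).

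I expect the steps requiring the most care to be the affine-span genericity argument — which is exactly where the hypotheses $|V(G)|\ge 2d$ and $t\le d-1$ are used — together with keeping the two $\sigma$-actions and the eigenspace dimension counts straight; the remainder is routine linear algebra.
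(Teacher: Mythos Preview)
Your proof is correct and follows essentially the same approach as the paper: decompose both the domain and codomain of the rigidity matrix into $\pm 1$ eigenspaces for the $\Z_2$-action, observe that $R(G,p)$ respects this splitting, and apply rank--nullity on each block. The only notable difference is cosmetic: the paper computes $\dim T_\pm$ by exhibiting an explicit basis of trivial motions (axis-aligned rotations and translations) and checking which lie in each eigenspace, whereas you parametrise $T$ by pairs $(S,w)$ and read off the eigenspace dimensions from the action $(S,w)\mapsto(\gamma S\gamma,\gamma w)$ --- a slightly cleaner bookkeeping device that yields the same numbers.
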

\begin{proof}
Let $n=|V(G)|$ and $m=|E(G)|$. 
Since $(G,\ast)$ is $\Gamma$-rigid, there is a $\Gamma$-symmetric infinitesimally rigid realisation $(G, \ast,p)$ of $(G,\ast)$. Since $|V(G)|\geq 2d$, we can take such a realisation so that the affine span of $p(V(G))$ is at least $d-1$. 
In particular, the space of trivial motions of $(G,*,p)$ has dimension $\binom{d+1}{2}$.

Recall that the rigidity matrix $R(G,p)$ represents a linear map from $\mathbb{R}^{dn}$ to $\mathbb{R}^m$.
We shall decompose $\mathbb{R}^{dn}$ and $\mathbb{R}^m$ into two subspaces whose elements are either symmetric or anti-symmetric with respect to $\Gamma$ and $*$.
Specifically, let 
\begin{align*}
M_{\rm sym}&:=\{ \dot{p}:V(G)\rightarrow \mathbb{R}^d \mid \dot{p}(u^*)=I_{t,d} \dot{p}(u)\mbox{ for all } u\in V(G)\} \\
M_{\rm ant}&:=\{ \dot{p}:V(G)\rightarrow \mathbb{R}^d \mid \dot{p}(u^*)=-I_{t,d} \dot{p}(u)\mbox{ for all } u\in V(G)\} \\
S_{\rm sym}&:=\{ w:E(G)\rightarrow \mathbb{R} \mid w(e^*)=w(e) \mbox{ for all } e\in E(G)\} \\
S_{\rm ant}&:=\{ w:E(G)\rightarrow \mathbb{R} \mid w(e^*)=-w(e) \mbox{ for all } e\in E(G)\}. 
\end{align*}
Then $\mathbb{R}^{dn}=M_{\rm sym}\oplus M_{\rm ant}$ and $\mathbb{R}^m=S_{\rm sym}\oplus S_{\rm ant}$.
Observe further that the rigidity matrix maps  $M_{\rm sym}$  to $S_{\rm sym}$ and $M_{\rm ant}$  to $S_{\rm ant}$.
Indeed, if $\dot{p}\in M_{\rm sym}$, then for any edge $e=uv$,
we have 
$(p(u^*)-p(v^*))\cdot (\dot{p}(u^*)-\dot{p}(v^*))= (I_{t,d}(p(u^*)-p(v^*)))\cdot I_{t,d}((\dot{p}(u^*)-\dot{p}(v^*))=(p(u)-p(v))\cdot (\dot{p}(u)-\dot{p}(v))$, which implies that the image of $\dot{p}$ belongs to $S_{\rm sym}$.
A similar calculation shows the corresponding property for $M_{\rm ant}$.

Let $T$ be the space of trivial infinitesimal motions of $(G,p)$. 
A canonical basis of $T$ consists of $\binom{d}{2}$ infinitesimal rotations about the subspaces spanned by each set of $(d-2)$ axes and $d$ translations along each axis.
Due to the structure of $I_{t,d}$, it follows that the $\binom{t+1}{2}$-dimensional space of isometries in the subspace spanned by the first $t$ axes and the $\binom{d-t}{2}$-dimensional space of rotations rotating in the subspace spanned by the last $d-t$ axes are contained in $M_{\rm sym}$. 
One can also directly check that the remaining   $\binom{d+1}{2} - \binom{t+1}{2}-\binom{d-t}{2}$ elements of the canonical basis belong to $M_{\rm ant}$. Hence 
\begin{equation*}
\dim T\cap M_{\rm sym}= \binom{t+1}{2}+\binom{d-t}{2} \text{ and }
\dim T\cap M_{\rm ant} =  \binom{d+1}{2} - \binom{t+1}{2}-\binom{d-t}{2}.
\end{equation*}

The infinitesimal rigidity of $(G,p)$ implies that $\ker R(G,p)=T$.
Since the rigidity matrix maps  $M_{\rm sym}$  to $S_{\rm sym}$ and $M_{\rm ant}$  to $S_{\rm ant}$, 
this gives
\begin{align*}
\frac{m}{2}&=\dim S_{\rm sym}\geq \dim M_{\rm sym}-\dim T\cap M_{\rm sym}=\frac{dn}{2}- \left(\binom{t+1}{2}+\binom{d-t}{2}\right) \\
\frac{m}{2}&=\dim S_{\rm ant}\geq \dim M_{\rm ant}-\dim T\cap M_{\rm ant}=\frac{dn}{2}- \left( \binom{d+1}{2} - \binom{t+1}{2}-\binom{d-t}{2}\right)
\end{align*}
as required.
\end{proof}

\begin{example}\label{ex:2}
Suppose $(G,*)$ is $\Gamma$-rigid in $\R^3$. Then,  Lemma~\ref{lem:symmetric_maxwell}
implies 
\[
|E(G)|\geq 3|V(G)|-\begin{cases}
6 & \text{ if $t=0$ i.e. $\Gamma$ is generated by a point inversion, } \\
4 & \text{ if $t=1$ i.e. $\Gamma$ is generated by a half turn rotation, } \\
6 & \text{ if $t=2$ i.e. $\Gamma$ is generated by a reflection}.
\end{cases}
\]
If $G$ is the graph of the octahedron, $|E(G)|=12$ and $|V(G)|=6$.
So it cannot be $\Gamma$-rigid if $\Gamma$ is generated by a half turn rotation, which is the case of the Bricard octahedron.
\end{example}

Note that the case when $d=3$ is exceptional since
the right side of (\ref{eq:symmetric_maxwell}) is maximised at $t=1$ when $d=3$, and at $t=0$ when $d\geq 4$.

The infinitesimal rigidity of frameworks having a point group symmetry is an extensively studied topic in rigidity theory.
See, for example, \cite{ST,SW} for symmetric extensions of classical rigidity theorems.

\subsection{Gluing properties}

We write $\aff(Y)$ for the affine span of a subset $Y$ of $\mathbb R^d$ and $\lin(Y)$ for its linear span.
The gluing properties of rigid frameworks and graphs are important ingredients  in the proof of Fogelsanger's Rigidity Theorem. For completeness, and since we cannot find an authoritative source 
for these results in the literature, we include details of these 
gluing properties 
in the classical non-symmetric setting.

\begin{theorem}
    \label{thm_gluing_frmwks}
    Let $(G,p)$ be a framework in $\mathbb R^d$.
    Suppose that $G_1, G_2$
    are subgraphs of $G$ such that $(G_i,p|_{V(G_i)})$ is infinitesimally rigid for  $i=1,2$ and, $\aff(p(V(G_1) \cap V(G_2)))$ has dimension at least $d-1$. Then $(G,p)$ is infinitesimally rigid. 
\end{theorem}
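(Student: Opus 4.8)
The plan is to strip away the combinatorics at once and reduce everything to a statement about \emph{trivial} infinitesimal motions — affine maps $x\mapsto Sx+t$ with $S$ skew-symmetric — together with a single linear-algebra fact, namely that a skew-symmetric matrix has even rank. For definiteness I will work with the hypothesis $V(G)=V(G_1)\cup V(G_2)$ (which the conclusion forces anyway; adjoining any remaining edges of $G$ afterwards only shrinks the kernel of the rigidity matrix, so cannot destroy infinitesimal rigidity).

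First I would take an arbitrary infinitesimal motion $\dot p$ of $(G,p)$. Since $E(G_i)\subseteq E(G)$, the restriction $\dot p|_{V(G_i)}$ satisfies (\ref{eq:inf}) for $(G_i,p|_{V(G_i)})$, so it is an infinitesimal motion of that framework; by hypothesis it is trivial, and hence there are a skew-symmetric matrix $S_i$ and a vector $t_i\in\R^d$ with $\dot p(v)=S_ip(v)+t_i$ for all $v\in V(G_i)$, for $i=1,2$. (This uses only the definition of infinitesimal rigidity recalled above, and is valid regardless of the dimension of $\aff(p(V(G_i)))$.) The goal now becomes to show $S_1=S_2$ and $t_1=t_2$: granting this, $\dot p$ agrees on $V(G_1)\cup V(G_2)=V(G)$ with the single trivial motion $v\mapsto S_1p(v)+t_1$, which proves $(G,p)$ infinitesimally rigid.

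To obtain this, set $S=S_1-S_2$ (again skew-symmetric) and $t=t_1-t_2$. For every $v$ in $W:=V(G_1)\cap V(G_2)$ we have $Sp(v)+t=0$, so the affine map $x\mapsto Sx+t$ vanishes on $\aff(p(W))$, which by hypothesis has dimension at least $d-1$. Fixing some $v_0\in W$ and subtracting the relation $Sp(v_0)+t=0$ shows that $S$ annihilates the direction space of $\aff(p(W))$, a linear subspace of dimension at least $d-1$; hence $\rank S\le 1$. Since $S$ is skew-symmetric its rank is even, so $S=0$, and then $t=-Sp(v_0)=0$, as wanted.

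I do not expect a genuine obstacle: the whole content sits in the last paragraph. The two points needing a little care are (i) to work with the \emph{direction space} of $\aff(p(W))$ rather than with $\aff(p(W))$ itself when concluding $\rank S\le 1$, and (ii) to note that it is precisely skew-symmetry that upgrades ``$\rank S\le 1$'' to ``$S=0$'' — without symmetry the ``$d-1$'' hypothesis would be too weak, as a rank-one non-skew map can vanish on a hyperplane.
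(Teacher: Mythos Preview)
Your proof is correct and follows essentially the same route as the paper's: both restrict an infinitesimal motion to $G_1$ and $G_2$ to obtain two trivial motions $x\mapsto S_ix+t_i$, then use the $(d-1)$-dimensional affine span of the overlap to force $S_1-S_2=0$ via the even-rank property of skew-symmetric matrices. The only cosmetic difference is that the paper translates coordinates so that some $p(w)=0$ (making $t_1=t_2$ immediate), whereas you subtract the relation at a base point $v_0$; the paper also leaves the ``rank $\le 1$ and skew $\Rightarrow$ zero'' step implicit, which you spell out.
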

\begin{proof}
Suppose that $\dot{p}$ is an infinitesimal flex of $(G,p)$.
By assumption there exist skew symmetric matrices $A_i$ of size $d$ and $t_i \in \mathbb R^d$ for $ i =1,2$ such that $\dot{p}(v) = A_ip(v) +t_i$
for $v \in V(G_i)$. We can choose coordinates so that $p(w) = 0$ for 
some $w \in V(G_1) \cap V(G_2)$. It follows that $A_1 0 +t_1 = 
\dot{p}(w) = A_2 0 + t_2$. Therefore $t_1 = t_2$ and so $A_1p(v) =
A_2p(v)$ for all $v \in V(G_1) \cap V(G_2)$. Therefore the skew symmetric matrix $A_1 - A_2$ vanishes on a space of dimension $d-1$ and so
must be $0$ as required.
\end{proof}

\begin{corollary}
    \label{cor_gluing_graphs}
    Suppose that $G_1,G_2$ are rigid graphs in $\mathbb R^d$. If $|V(G_1) \cap V(G_1)| \geq d$ then $G_1 \cup G_2$
    is rigid in $\mathbb R^d$.
    \qed
\end{corollary}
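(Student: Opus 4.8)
The plan is to exhibit one realisation of $G=G_1\cup G_2$ to which Theorem~\ref{thm_gluing_frmwks} applies. Write $W=V(G_1)\cap V(G_2)$ and take $p\colon V(G)\to\R^d$ to be a \emph{generic} configuration, i.e.\ one whose $d|V(G)|$ coordinates are algebraically independent over $\mathbb{Q}$. I will use $G_1$ and $G_2$ as the two subgraphs of $G$ in Theorem~\ref{thm_gluing_frmwks}, with the restrictions of $p$ as their point configurations, so there are two things to verify.

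First, $(G_i,p|_{V(G_i)})$ should be infinitesimally rigid for $i=1,2$. Since a sub-tuple of algebraically independent reals is again algebraically independent, $p|_{V(G_i)}$ is a generic realisation of $G_i$. Now I invoke the (standard) fact that rigidity in $\R^d$ is a generic property: the rank of the rigidity matrix $R(G_i,\cdot)$ is lower semicontinuous and attains its maximum on a nonempty Zariski-open subset of configurations defined over $\mathbb{Q}$, hence at every generic configuration; combining this with the rank criterion for infinitesimal rigidity from Section~\ref{sec_pre_rigidity} (applicable here because $|V(G_i)|\ge|W|\ge d$, so all of the generic realisations and all of the infinitesimally rigid realisations of $G_i$ in sight have affine span of dimension at least $d-1$), one gets that, since $G_i$ has \emph{some} infinitesimally rigid realisation, every generic realisation of $G_i$ is infinitesimally rigid — in particular $(G_i,p|_{V(G_i)})$ is. Second, $\aff(p(W))$ should have dimension at least $d-1$; but $|W|\ge d$ and $p$ is generic, so any $d$ of the points of $p(W)$ are affinely independent and therefore $\dim\aff(p(W))=\min\{|W|-1,\,d\}\ge d-1$.

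Both hypotheses of Theorem~\ref{thm_gluing_frmwks} now hold, so $(G,p)$ is infinitesimally rigid and hence $G=G_1\cup G_2$ is rigid in $\R^d$. I do not expect a genuine obstacle here: the entire content is Theorem~\ref{thm_gluing_frmwks}, the only external ingredient is the genericity of rigidity (which, if a self-contained treatment is wanted, follows from the semicontinuity and rank-criterion remarks above together with the elementary fact that an infinitesimally rigid framework on at least $d$ vertices has affine span of dimension at least $d-1$), and the hypothesis $|V(G_1)\cap V(G_2)|\ge d$ is used exactly once — to force $\aff(p(W))$ to have dimension at least $d-1$, which is precisely the condition Theorem~\ref{thm_gluing_frmwks} requires.
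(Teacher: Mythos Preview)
Your proof is correct and is precisely the argument the paper intends: the corollary is marked \qed\ in the paper, meaning it is regarded as immediate from Theorem~\ref{thm_gluing_frmwks} once one passes to a generic realisation and uses that rigidity is a generic property. You have simply written out that implicit argument in full.
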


Now we extend Corollary \ref{cor_gluing_graphs} to $\Gamma$-rigidity.
First we prove a lemma about the affine span of $\Gamma$-symmetric subsets of $\mathbb R^d$.

\begin{lemma}
\label{lem_affine}
    Let $\gamma = I_{t,d}$ for $0 \leq t \leq d-1$.
    Suppose that $P$ is a generic set of points in 
    $\mathbb R^d$ and $|P| = n$. Then $\dim(\aff(P \cup \gamma(P))) =\min\{n,d-t\}
    +\min\{n-1,t\}$.
\end{lemma}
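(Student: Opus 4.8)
The plan is to analyse the affine span by splitting according to the eigenspaces of $\gamma = I_{t,d}$. Write $\mathbb{R}^d = E_+ \oplus E_-$, where $E_+$ is the $t$-dimensional $(+1)$-eigenspace (spanned by the first $t$ coordinate axes) and $E_-$ is the $(d-t)$-dimensional $(-1)$-eigenspace. For a point $q$, write $q = q_+ + q_-$ accordingly; then $\gamma(q) = q_+ - q_-$. The key observation is that for $P = \{p_1,\dots,p_n\}$, the span $\aff(P \cup \gamma(P))$ contains, for each $i$, both $p_i = p_{i,+} + p_{i,-}$ and $\gamma(p_i) = p_{i,+} - p_{i,-}$, and hence contains their midpoint $p_{i,+}$ and half-difference $p_{i,-}$. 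Conversely $\aff(P \cup \gamma(P)) \subseteq \aff(\{p_{i,+}\} \cup \{p_{i,-}\}) $ is not quite right as an affine statement, so I will instead pass to the linearisation: fix $p_1$ as an origin and consider the linear span $W = \lin(\{p_i - p_1, \gamma(p_i) - \gamma(p_1) : i\}) = \lin(\{p_i - p_1, \gamma(p_i - p_1) : i\})$, so that $\dim \aff(P \cup \gamma(P)) = \dim W$ (using $n \geq 1$; the case $n=0$ is vacuous, and the formula should be checked to be consistent for small $n$).

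Next I would show $W = W_+ \oplus W_-$ where $W_+ = \lin(\{(p_i - p_1)_+ : i\}) \subseteq E_+$ and $W_- = \lin(\{(p_i - p_1)_- : i\}) \subseteq E_-$. The inclusion $W \subseteq W_+ \oplus W_-$ is immediate since each generator of $W$ lies in $W_+ \oplus W_-$; the reverse inclusion follows because $(p_i - p_1)_+ = \tfrac12\big((p_i - p_1) + \gamma(p_i - p_1)\big) \in W$ and similarly $(p_i - p_1)_- \in W$. Hence $\dim W = \dim W_+ + \dim W_-$. Now $W_-$ is the span of $n$ vectors $(p_i - p_1)_-$ in the $(d-t)$-dimensional space $E_-$; note $(p_1 - p_1)_- = 0$, but the remaining $n-1$ projections together with genericity... here one must be slightly careful: $W_-$ is spanned by $(p_i - p_1)_-$ for $i = 1, \dots, n$, which is the same as $(p_i)_- - (p_1)_-$, i.e. the span of differences of $n$ generic points in $E_-$, so $\dim W_- = \min\{n-1, d-t\}$. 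Meanwhile $W_+ = \lin(\{(p_i)_+ - (p_1)_+ : i\})$ is likewise the span of differences of $n$ generic points projected into $E_+$; wait — but the formula claims $\dim(\aff(P\cup\gamma(P))) = \min\{n, d-t\} + \min\{n-1, t\}$, which puts the "$n$" with $d-t$ and "$n-1$" with $t$, the opposite pairing. This tells me the affine span should be handled directly rather than via $p_1$: the correct statement is $\dim\aff(P \cup \gamma(P)) = \dim\aff(\{(p_i)_+\}) + \dim\lin(\{(p_i)_-\})$, because the $E_+$-components of $P$ and $\gamma(P)$ coincide (giving an affine span of dimension $\min\{n-1,t\}$ among generic points of $E_+$), while the $E_-$-components appear with both signs $\pm(p_i)_-$, so their contribution is the linear span $\lin(\{(p_i)_-\})$ of $n$ generic vectors in $E_-$, of dimension $\min\{n, d-t\}$, provided $0$ is included among them — and indeed $0$ is the $E_-$-component of the midpoint of $p_i$ and $\gamma(p_i)$ whenever the span is affine. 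Summing gives the claimed formula.

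The main obstacle is getting the two "$\min$" terms to land on the right factors, i.e. correctly tracking why the $(-1)$-eigenspace contributes a \emph{linear} span (dimension $\min\{n, d-t\}$, one more than a naive difference count) while the $(+1)$-eigenspace contributes only an \emph{affine} span (dimension $\min\{n-1,t\}$). The clean way to organise this is: choose a point $q_0 \in \aff(P \cup \gamma(P))$ that is $\gamma$-fixed — e.g. the midpoint $\tfrac12(p_1 + \gamma(p_1))$, which lies in $E_+$ after translating — translate so that $q_0 = 0$, and then $\aff(P \cup \gamma(P))$ becomes a $\gamma$-invariant linear subspace $W$, which splits as $W_+ \oplus W_-$. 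After this translation, $W_+$ is spanned by the $E_+$-parts of $p_1, \dots, p_n$ relative to their common midpoint structure — effectively $n$ points with one affine relation, giving $\min\{n-1,t\}$ — and $W_-$ is spanned by $(p_1)_-, \dots, (p_n)_-$ as honest vectors (no forced relation, since $0 \in W_-$ is already accounted for by the fixed point), giving $\min\{n, d-t\}$ by genericity. Verifying that genericity of $P$ in $\mathbb{R}^d$ implies the required genericity of the two families of projections is routine (projections of algebraically independent coordinates remain in "general position" for the purpose of these rank counts), and I would state it as such rather than belabour it. Finally I would sanity-check the edge cases $n = 1$ (formula gives $\min\{1,d-t\} + \min\{0,t\} = 1 + 0 = 1$ if $t \geq 1$... but a single point has affine span dimension $0$ while $P \cup \gamma(P)$ has two distinct points for generic $p_1$ when $t < d$, so dimension $1$ — consistent) and $t = 0$ (formula gives $\min\{n, d\} + 0$, matching that $P \cup (-P)$ of $n$ generic points spans an $n$-dimensional linear space through the origin when $n \leq d$).
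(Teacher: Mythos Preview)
Your proposal is correct and, once you arrive at the right formulation, takes essentially the same approach as the paper: decompose $\mathbb{R}^d$ into the $(\pm 1)$-eigenspaces of $\gamma$, show that $\aff(P\cup\gamma(P)) = \aff(\pi_{E_+}(P)) + \lin(\pi_{E_-}(P))$, and then use genericity of the coordinate projections to read off the two dimensions as $\min\{n-1,t\}$ and $\min\{n,d-t\}$. The paper proves the affine identity directly via explicit affine combinations rather than by translating to a $\gamma$-fixed midpoint, but this is a cosmetic difference; your initial false start (linearising at $p_1$) is worth excising in a clean write-up, since it obscures the point you correctly identify afterwards.
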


\begin{proof}
    Let $V$, respectively $W$, be the eigenspace of $\gamma$ corresponding to the eigenvalue $1$, respectively $-1$ and let 
    $\pi_V$, respectively $\pi_W$, be the orthogonal projection from 
    $\mathbb R^d$ onto $V$, respectively $W$. Since $P$ is generic in 
    $\mathbb R^d$ and $\pi_V, \pi_W$ are projections onto coordinate subspaces, it follows that $\pi_V(P)$ is generic in $V$ and
    $\pi_W(P)$ is generic in $W$. 
    Therefore 
    $\dim(\aff(\pi_V(P))) = \min\{n-1,t\}$ and 
    $\dim(\lin(\pi_W(P))) = \min\{n,d-t\}$. 
    \begin{claim}
        $\aff(P \cup \gamma(P)) = \aff(\pi_V(P)) + \lin(\pi_W(P))$.
    \end{claim}
    \begin{proof}[Proof of claim.]
        Suppose $x = \sum_{q \in P \cup \gamma(P)} c_q q \in \aff(P \cup \gamma(P)))$ for scalars $c_q$ with $\sum_q c_q=1$. 
        Now  $x = \pi_V(x) + \pi_W(x)$ and $\pi_V(x) = \sum_{p \in P}
        (c_p + c_{\gamma(p)})\pi_V(p)$ and $\pi_W(x) = \sum_{p \in P}
        (c_p - c_{\gamma(p)})\pi_W(p)$. Since $\sum_{p \in P}
        (c_p+c_{\gamma(p)}) = \sum_{q \in P\cup \gamma(P)} c_q = 1$, 
        it follows that $\aff(P \cup \gamma(P)) \subset \aff(\pi_V(P)) + \lin(\pi_W(P))$. 
        
        On the other hand, consider $\sum_{p \in P} d_p \pi_V(p) \in \aff(\pi_V(P))$
        and $ \sum_{p \in P} b_p \pi_W(p) \in \lin(\pi_W(P)) $.
        Set $c_p=\frac{d_p+b_p}{2}$ and $c_{\gamma(p)}=\frac{d_p-b_p}{2}$ for each $p\in P$.
        Then $\sum_{q \in P \cup \gamma(P)} c_q = \sum_{p \in P} d_p = 1$.
        It follows that $\aff(\pi_V(P)) + \lin(\pi_W(P)) \subset \aff(P \cup \gamma(P))$. 
    \end{proof}
The facts that
$\aff(\pi_V(P)) \subseteq V$, $\lin(\pi_W(P))\subseteq W$ and $V\cap W=\{\bf 0\}$ (since $V,W$ are eigenspaces for distinct eigenvalues of $\gamma$) now  give
\begin{align*}
\dim(\aff(P \cup \gamma(P))) =
\dim (\aff(\pi_V(P)) + \lin(\pi_W(P)))&=\dim\aff(\pi_V(P)) + \dim\lin(\pi_W(P))\\
&=
\min\{n,d-t\}
    +\min\{n-1,t\},
\end{align*}
as required.
\end{proof}

\begin{theorem}[Gluing Theorem]
    \label{thm_pcs_gluing}
    Let $\Gamma = \Gamma_{t,d}$ for $0 \leq t \leq d-1$. Let $(G,\ast)$ be a graph with a  vertex pairing $\ast: X\rightarrow X$. Suppose  that $H_1=(V_1,E_1)$ and $H_2=(V_2,E_2)$ are  subgraphs of $G$ whose union is $G$ and that, for $i=1,2$, 
    $(H_i,\ast)$ is $\Gamma$-rigid in $\mathbb R^d$.
    Let $m = |X_{V_1} \cap X_{V_2}|$.
    \begin{enumerate}
        \item[(a)] If $m >0$ and  $$|V_1 \cap V_2| \geq d+m-1-\min\{m/2,d-t\}-\min\{m/2-1,t\}$$ then $(G,\ast)$ is $\Gamma$-rigid.
        \item[(b)] If $m =0$ and   $$|V_1 \cap V_2| \geq d$$ then $(G,\ast)$ is $\Gamma$-rigid. 
    \end{enumerate}
\end{theorem}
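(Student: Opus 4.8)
The plan is to follow the argument in the proof of Theorem~\ref{thm_gluing_frmwks}, the only new ingredient being a lower bound on $\dim\aff(p(V_1\cap V_2))$ for a generic realisation, which we extract from Lemma~\ref{lem_affine}.

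Choose a generic $\Gamma$-symmetric realisation $(G,\ast,p)$ of $(G,\ast)$. Since $(G,\ast)$ is $\Gamma$-rigid if and only if its generic $\Gamma$-symmetric realisations are infinitesimally rigid, it suffices to show that $(G,p)$ is infinitesimally rigid. First I would check that, for $i=1,2$, the restriction $p|_{V_i}$ is a generic $\Gamma$-symmetric realisation of $(H_i,\ast_{V_i})$. The coordinates of the points of $p(V_i)$ are algebraically independent over $\mathbb{Q}$ except for the relations $p(u^\ast)=\gamma(p(u))$ forced by the pairs $u\in X_{V_i}$, while the ``half-paired'' vertices $u\in(X\cap V_i)\setminus X_{V_i}$, namely those with $u^\ast\notin V_i$, each contribute $d$ independent coordinates, just like the vertices outside $X$; a transcendence-degree count then shows that this degree attains its maximal value $d(|V_i|-|X_{V_i}|/2)$, so $p|_{V_i}$ is generic. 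Since $(H_i,\ast)$ is $\Gamma$-rigid, it follows that $(H_i,p|_{V_i})$ is infinitesimally rigid.

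The heart of the proof is the inequality $\dim\aff(p(V_1\cap V_2))\geq d-1$. One checks that $X_{V_1\cap V_2}=X_{V_1}\cap X_{V_2}$, so this set has $m$ elements and $V_1\cap V_2$ is the disjoint union of $m/2$ orbit-pairs and $|V_1\cap V_2|-m$ further vertices; hence $p(V_1\cap V_2)=A\cup\gamma(A)\cup B$, where $A$ is a set of $m/2$ orbit representatives, $|B|=|V_1\cap V_2|-m$, and, by genericity of $p$, the set $A\cup B$ is generic in $\mathbb{R}^d$. In case (a), Lemma~\ref{lem_affine} gives $\dim\aff(A\cup\gamma(A))=\min\{m/2,d-t\}+\min\{m/2-1,t\}$, and since $B$ is generic over the field generated by the coordinates of $A$, each of its points raises the affine dimension by one until it reaches $d$; thus $\dim\aff(p(V_1\cap V_2))=\min\{d,\ \min\{m/2,d-t\}+\min\{m/2-1,t\}+|V_1\cap V_2|-m\}$, and the hypothesis of (a) says precisely that this quantity is at least $d-1$. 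In case (b) we have $m=0$, so $p(V_1\cap V_2)=B$ is a generic set of at least $d$ points and already has affine span of dimension at least $d-1$. In either case, applying Theorem~\ref{thm_gluing_frmwks} with $G_1=H_1$ and $G_2=H_2$ shows that $(G,p)$ is infinitesimally rigid, hence $(G,\ast)$ is $\Gamma$-rigid.

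I expect the main obstacle to be this affine-span computation: decomposing $p(V_1\cap V_2)$ into its symmetric and its free parts, invoking Lemma~\ref{lem_affine} with the correct parameter $n=m/2$, and verifying that genericity of $p$ really forces each of the $|B|$ extra points to increase the affine dimension; once this is in place, matching the resulting formula to the stated bound is routine arithmetic. A secondary point needing care is the claim that restricting a generic $\Gamma$-realisation of $(G,\ast)$ yields a generic $\Gamma$-realisation of each $(H_i,\ast_{V_i})$, precisely because of the half-paired vertices.
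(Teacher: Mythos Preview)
Your proposal is correct and follows essentially the same route as the paper: take a generic $\Gamma$-realisation, note that each $(H_i,\ast,p|_{V_i})$ is generic and hence infinitesimally rigid, compute $\dim\aff(p(V_1\cap V_2))$ via Lemma~\ref{lem_affine}, and finish with Theorem~\ref{thm_gluing_frmwks}. Your treatment is in fact more detailed than the paper's, which simply asserts the formula $\dim\aff(p(V_1\cap V_2))=\min\{|V_1\cap V_2|-m+\min\{m/2,d-t\}+\min\{m/2-1,t\},\,d\}$ without spelling out the decomposition into the paired part $A\cup\gamma(A)$ and the free part $B$.
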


\begin{proof}
    Let $(G,\ast,p)$ be a generic $\Gamma$-framework. Then, for $i=1,2$,
    $(H_i, \ast, p|_{V_i})$ is generic and therefore infinitesimally rigid. 
    Now we observe that, by Lemma \ref{lem_affine}, the dimension of 
    $\aff(p(V_1 \cap V_2))$ is 
    $\min\{|V_1\cap V_2| - m +\min\{m/2,d-t\} +\min\{m/2-1,t\},d\}$
    if $m>0$, and is
    $\min\{|V_1\cap V_2| -1,d\}$ if $m=0$.
    The theorem now follows by applying Theorem \ref{thm_gluing_frmwks}
    to the frameworks $(H_i,p|_{V_i})$, $i=1,2$.
\end{proof}

\subsection{Vertex splitting}

Whiteley's Vertex Splitting Lemma \cite[Proposition 1]{Whiteley_splitting} is a fundamental result in rigidity theory and plays a key role in the proof of Fogelsanger's Rigidity Theorem.
We will derive a version of this lemma for $\Gamma$-rigidity.

First we fix some terminology.
Let $G$ be a graph, $u \in V(G)$, $C \subset N_G(u)$
and $D\subset N_G(u) \setminus C$. Let $G'$ 
be the graph obtained from $G$ by deleting 
the edges $uw$ for all $w \in D$, adding a new vertex $u'$ 
and adding edges $u'z, z \in C \cup D \cup \{u\}$. 
We say that $G'$ is {\em obtained from 
$G$ by splitting $u'$ from $u$ along $C$}, or more succinctly, 
by vertex splitting at $u$. 

\begin{theorem}[Whiteley's Vertex Splitting~\cite{Whiteley_splitting}]
    Let $(G,p)$ be a framework in $\mathbb R^d$ such that $R(G,p)$ is row independent. 
    Also let $u \in V(G)$ and $C \subset N_G(u)$ such that 
    $|C| \leq d-1$ and $\{p(w) - p(u): w \in C\}$ is linearly independent. Let $G'$ be the graph obtained
    from $G$ by splitting $u'$ from $u$ along $C$. Then there is some $z \in \mathbb R^d$ such that 
    $R(G',q)$ is row-independent, where $q(w) = p(w)$ for all $w \in V(G)$ and $q(u') = p(u)+z$. \qed
    \label{thm_whiteley}
\end{theorem}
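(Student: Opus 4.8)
The plan is to put $u'$ at $p(u)+z$ and show that $R(G',q)$ is row‑independent for a suitable choice of $z$, where $q(w)=p(w)$ for $w\in V(G)$ and $q(u')=p(u)+z$. Write $B=N_G(u)\sm(C\cup D)$, so in $G'$ the vertex $u$ keeps exactly the edges to $B\cup C$, the edges from $u$ to $D$ are moved onto $u'$, and $u'$ also receives edges to $C$ and to $u$; in particular $|E(G')|=|E(G)|+|C|+1$. Since row‑independence of $R(G',q)$ is an open condition on $z$, it suffices to rule out the possibility that $R(G',q)$ is row‑dependent for \emph{every} $z$. I will use the standard fact that a row‑dependence of $R(G,p)$ is the same thing as a \emph{self‑stress}, i.e.\ a map $\omega\colon E(G)\to\mathbb R$ with $\sum_{w:\,vw\in E(G)}\omega(vw)(p(v)-p(w))=0$ at every vertex $v$; by hypothesis $(G,p)$ carries no nonzero self‑stress.

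\emph{Step 1: the coincident configuration.} First take $z=0$, so that $u'$ and $u$ coincide and the row of $R(G',q)$ indexed by the edge $u'u$ is identically zero. I claim the remaining $|E(G')|-1$ rows are linearly independent, so $\rank R(G',q)=|E(G')|-1$. Given a dependence $\omega$ among those rows, define $\bar\omega\colon E(G)\to\mathbb R$ by ``transferring'' the stress on each edge at $u'$ back onto the corresponding edge at $u$: $\bar\omega(e)=\omega(e)$ if $e$ misses $u$ or $e=uw$ with $w\in B$; $\bar\omega(uw)=\omega(uw)+\omega(u'w)$ if $w\in C$; and $\bar\omega(uw)=\omega(u'w)$ if $w\in D$. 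A vertex‑by‑vertex check, using the self‑stress equations of $\omega$ at $u'$, at $u$, and at each $w\in N_G(u)$ together with $q(u')=p(u)$, shows that $\bar\omega$ is a self‑stress of $(G,p)$, hence $\bar\omega\equiv 0$. Substituting back leaves $\omega$ supported on $\{uw,u'w:w\in C\}\cup\{u'u\}$ with $\omega(uw)=-\omega(u'w)$, and then the self‑stress equation at $u'$ reduces to $\sum_{w\in C}\omega(u'w)(p(u)-p(w))=0$; since $\{p(w)-p(u):w\in C\}$ is linearly independent, this forces $\omega(u'w)=0$ for all $w\in C$, so $\omega$ vanishes off the edge $u'u$, proving the claim.

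\emph{Step 2: moving $u'$ off $u$.} Choose $v\in\mathbb R^d$ with $v\notin\lin\{p(w)-p(u):w\in C\}$, which is possible because $|C|\le d-1$. Suppose, for contradiction, that $R(G',q)$ is row‑dependent for every $z$. By Step 1 and lower semicontinuity of rank, the matrix $R(G',q)$ with $q(u')=p(u)+sv$ has a one‑dimensional left kernel for all sufficiently small $s$, limiting as $s\to 0$ to the line spanned by the indicator of the edge $u'u$; by a standard argument (Cramer's rule applied to the system augmented by the normalisation $\omega(u'u)=1$) there is an analytic family $s\mapsto\omega_s$ of self‑stresses of this framework with $\omega_s(u'u)=1$ and $\omega_s(e)=s\,\alpha_e+O(s^2)$ for every other edge $e$. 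Extracting the order‑$s$ part of the self‑stress equations and putting $\alpha(e)=\alpha_e$, $\alpha(uw)=\alpha_{uw}+\alpha_{u'w}$ for $w\in C$, and $\alpha(uw)=\alpha_{u'w}$ for $w\in D$ (the first‑order analogue of the transfer in Step 1) yields a self‑stress $\alpha$ of $(G,p)$, hence $\alpha\equiv 0$; in particular $\alpha_{u'w}=0$ for all $w\in D$. But the order‑$s$ part of the self‑stress equation of $\omega_s$ at $u'$ reads $\sum_{w\in C\cup D}\alpha_{u'w}(p(w)-p(u))=v$, which now becomes $\sum_{w\in C}\alpha_{u'w}(p(w)-p(u))=v$, contradicting the choice of $v$. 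Therefore $R(G',q)$ is row‑independent for some $z$ — indeed for $z=sv$ with $s$ small and generic — which is exactly the assertion, with the required $z$.

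The step I expect to be the main obstacle is Step 2: the perturbative (first‑order) computation and the limiting argument that upgrades the rank bound of Step 1 to full row rank, together with the routine but somewhat lengthy bookkeeping needed, in both steps, to verify that the transferred stress really is a self‑stress of $(G,p)$. Everything else is elementary linear algebra, and Step 1 already encapsulates the essential use of the two hypotheses on $C$ (low valence and linear independence of the difference vectors) and of the hypothesis that $R(G,p)$ has no nonzero self‑stress.
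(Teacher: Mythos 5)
Your proof is correct; note that the paper itself gives no proof of Theorem \ref{thm_whiteley}, citing Whiteley instead, and your argument is essentially Whiteley's original one (compute the corank of the degenerate framework with $u'$ coincident with $u$, then kill the putative one-dimensional family of self-stresses by a first-order perturbation analysis along the ray $q(u')=p(u)+sv$). The only step needing care is the analyticity of the normalised stress $\omega_s$ in $s$, and your Cramer's-rule justification handles it adequately.
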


We will derive two versions of Theorem \ref{thm_whiteley} for preserving the $\Gamma$-rigidity of a graph $G$ with a
non-adjacent vertex pairing $*:X\to X$.
The first, Lemma \ref{lem_whiteley_pcs_contraction2}, will be applied when the split vertex $u$ does not belong to $X$. The second, Theorem \ref{thm_whiteley_pcs_contraction}, simultaneously splits two vertices  $x,x^*\in X$.

First a remark on the inverse operation to vertex splitting: edge contraction. Given a graph $G$ and an edge $uv\in E(G)$, we use  $G/uv$ to denote the simple graph obtained from $G$ by contracting  $v$ onto $u$. More precisely $G/uv = (G - v) \cup \{uz: z \in N_G(v)\}$.
Note that, if $*:X \rightarrow X$ is a non-adjacent vertex pairing and either
$X \cap \{u,v\} = \emptyset$, or $u\in X$, $v \not\in X$  and $u^*v \not\in E(G)$, then $*$ is also a
non-adjacent vertex pairing on $G/uv$.

\begin{lemma}
    Suppose $\Gamma$ is a point group of $\mathbb R^d$ of order two.
    Let $(G,\ast)$ be a graph with a non-adjacent vertex pairing $*:X
    \rightarrow X$ and $uv \in E(G)$ with $u,v \not\in X$. Suppose that there is $C \subset N_G(u) \cap N_G(v)$ such that $|C| = d-1$ and $|X_C| \leq 2$. If $(G/uv,*)$ is $\Gamma$-rigid in $\mathbb R^d$ then $(G,*)$ is $\Gamma$-rigid in $\mathbb R^d$.
    \label{lem_whiteley_pcs_contraction2}
\end{lemma}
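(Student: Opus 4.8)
The plan is to reduce Lemma~\ref{lem_whiteley_pcs_contraction2} to the classical Vertex Splitting Theorem~\ref{thm_whiteley} by working with a generic $\Gamma$-framework, splitting the single vertex $u$ (which is unconstrained, since $u \notin X$), and then verifying that the resulting framework is still a \emph{generic} $\Gamma$-framework so that its infinitesimal rigidity follows from row-independence of its rigidity matrix. First I would set up notation: write $n = |V(G)|$, let $G' = G$ be viewed as the result of splitting a new vertex $u'$ from $u$ along $C$ applied to $G/uv$ — that is, $(G/uv)'$ with $u'$ playing the role of $v$. Concretely, in $G/uv$ the vertex $u$ has $C \cup (N_G(v)\setminus\{u\})$ among its neighbours; splitting $u'=v$ off from $u$ along $C$ recovers exactly $G$ (edges $uz$ for $z\in C$ stay on $u$, edges $u'z$ for $z \in C \cup (N_G(v)\setminus C\setminus\{u\}) \cup \{u\}$ are the edges incident to $v$ in $G$). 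One should double-check this matches the definition of splitting given above, using $uv \in E(G)$ and $C \subset N_G(u)\cap N_G(v)$.

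Next I would take a generic $\Gamma$-framework $(G/uv, \ast, p)$. Since $(G/uv,\ast)$ is $\Gamma$-rigid and the framework is generic, $(G/uv,p)$ is infinitesimally rigid; because $|V(G/uv)| = n-1$ is large enough (we may assume $n-1 \geq d$, as otherwise the statement is vacuous or handled directly) the affine span of $p(V(G/uv))$ has dimension at least $d-1$, hence $\operatorname{rank} R(G/uv,p) = d(n-1) - \binom{d+1}{2}$, i.e. after deleting a basis of the trivial-motion complement the rigidity matrix is row-independent. Since $u \notin X$, $u$ is not identified with any other vertex under $\ast$, so $C \subset N_{G/uv}(u)$ with $|C| = d-1$, and genericity of $p$ guarantees $\{p(w) - p(u) : w \in C\}$ is linearly independent. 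Apply Theorem~\ref{thm_whiteley}: there is $z \in \mathbb{R}^d$ so that $R(G, q)$ is row-independent, where $q$ agrees with $p$ on $V(G/uv)$ and $q(u') = p(u) + z$ (with $u' = v$).

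The crucial remaining point — and the main obstacle — is that row-independence of $R(G,q)$ must be upgraded to infinitesimal rigidity of a \emph{$\Gamma$-symmetric} realisation of $(G,\ast)$. Here I would argue: $(G,q)$ is a $\Gamma$-framework because $V(G)\setminus V(G/uv) = \{v\}$ and $v \notin X$, so the symmetry constraint $q(y^*) = \gamma(q(y))$ for $y \in X$ is inherited unchanged from $(G/uv,p)$; the value $q(v)$ is free. Now $(G,q)$ has $|E(G)| = |E(G/uv)| + (|C| + |D| + 1)$ rows with $|C| = d-1$ (where $D = N_G(v)\setminus C \setminus \{u\}$), and row-independence of $R(G,q)$ gives $\operatorname{rank} R(G,q) = |E(G)|$. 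Counting against $dn - \binom{d+1}{2}$ and using that $(G/uv,p)$ was already maximally ranked, one checks the rank of $R(G,q)$ equals $dn - \binom{d+1}{2}$, so $(G,q)$ is infinitesimally rigid. Finally, since infinitesimal rigidity is an open condition and the generic $\Gamma$-symmetric realisations form an irreducible variety, a generic $\Gamma$-framework $(G,\ast,p')$ is also infinitesimally rigid — hence $(G,\ast)$ is $\Gamma$-rigid. The hypothesis $|X_C| \le 2$ is not needed for this single-vertex split (it becomes relevant only in the two-vertex version, Theorem~\ref{thm_whiteley_pcs_contraction}); I would remark on why it is harmless here, namely that $C$ may contain at most one pair $\{x,x^*\}$ but these constraints on $C$ do not interfere with the unconstrained split at $u$.
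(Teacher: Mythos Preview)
Your overall strategy---take a generic $\Gamma$-framework on $G/uv$, apply Whiteley's vertex splitting (Theorem~\ref{thm_whiteley}) at the unconstrained vertex $u$, and note that the resulting framework remains $\Gamma$-symmetric because $v\notin X$---is exactly the paper's approach. But there is a genuine gap, and it sits precisely where you dismiss the hypothesis $|X_C|\le 2$ as unnecessary.

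Your assertion that ``genericity of $p$ guarantees $\{p(w)-p(u):w\in C\}$ is linearly independent'' is not justified. The realisation $p$ is generic only \emph{among $\Gamma$-symmetric realisations}: every $x\in X$ satisfies $p(x^*)=\gamma(p(x))$, so vertices of $C$ lying in $X_C$ are algebraically tied together. If $C$ contains two or more $\ast$-pairs, $p(C\cup\{u\})$ can be affinely dependent even for a generic $\Gamma$-framework. For a concrete failure, take $\Gamma=\Gamma_{0,d}$ (point inversion) and suppose $\{x_1,x_1^*,x_2,x_2^*\}\subset C$, so $|X_C|\ge 4$. Then $p(x_i^*)=-p(x_i)$, and the affine span of these four points is the $2$-plane $\lin\{p(x_1),p(x_2)\}$ (cf.\ Lemma~\ref{lem_affine}). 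Adjoining the remaining $d-5$ points of $C$ and the generic $p(u)$ raises the affine dimension by at most $d-4$, so $\dim\aff(p(C\cup\{u\}))\le d-2$, and the $d$ points are affinely dependent. The hypothesis of Theorem~\ref{thm_whiteley} then fails.

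The paper uses $|X_C|\le 2$ exactly here: since $u\notin X$ we have $X_{C\cup\{u\}}=X_C$, and with at most one $\ast$-pair in $C\cup\{u\}$ one checks (e.g.\ via Lemma~\ref{lem_affine}) that $p(C\cup\{u\})$ is affinely independent, so the linear-independence hypothesis of Theorem~\ref{thm_whiteley} is satisfied. The hypothesis is not vestigial---it is precisely what makes this step go through.
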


\begin{proof}
    Observe that $X_{C \cup \{u\}} = X_C$ since $u \not\in X$.
    If $|X_C| = 0$ then for any $\Gamma$-generic framework $(G/uv,\ast,p)$ in $\mathbb R^d$,
    $p(C\cup \{u\})$ is a generic set of $d$ points in $\mathbb R^d$. 
    If $|X_C| = 2$ then $p(C \cup \{u\})$ is an affinely independent 
    set of $d$-points in $\mathbb R^d$. 
    In both cases, it follows that 
    $\{p(u) - p(z): z\in C\}$ is linearly independent and so we can use Theorem \ref{thm_whiteley} to 
    construct a $\Gamma$-framework $(G,\ast,q)$ that is infinitesimally rigid  in $\mathbb R^d$.
\end{proof}

We next give a variant of Theorem \ref{thm_whiteley} which deals with a symmetric splitting of two distinct vertices. 
Its proof 
is essentially the same as Whiteley's proof of Theorem \ref{thm_whiteley} so we only give the details which are different.

\begin{theorem}
    Let $\tau:\mathbb R^d \rightarrow \mathbb R^d$ be a non-singular linear transformation.  
    Suppose that $(G,p)$ is a framework in $\mathbb R^d$ such that $R(G,p)$ is row-independent.
    Also let $u,v \in V(G)$  be non-adjacent vertices, 
    $C_1 \subset N_G(u), C_2 \subset N_G(v)$ such that $|C_1|,|C_2| \leq d-1$, and the sets  $\{p(w)-p(u):w \in C_1\}$  and $\{p(x) - p(v): x \in C_2\}$ are both linearly independent. Let $G'$ be the graph 
    obtained by splitting $u$ from $u'$ along $C_1$ and then splitting $v$ from $v'$ along $C_2$. Then there is some 
    $z \in \mathbb R^d$ such that $R(G',q)$ is row-independent, where $q(w) = p(w)$
    for all $w \in V(G)$, $q(u') = p(u) +z$ and $q(v') = p(v) + \tau(z)$.
    \label{thm_2v_split}
\end{theorem}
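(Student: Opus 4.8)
The proof is a close adaptation of Whiteley's proof of Theorem~\ref{thm_whiteley}; I sketch the plan and flag the one place where a new idea is needed. Throughout, set $q(u')=p(u)+z$ and $q(v')=p(v)+\tau(z)$, where $z=(z_1,\dots,z_d)$ is a tuple of indeterminates; since $\tau$ is linear, every entry of $R(G',q)$ is a polynomial in $z_1,\dots,z_d$, and to prove that $R(G',q)$ is row-independent it suffices to exhibit one value of $z$ at which some maximal minor of $R(G',q)$ is nonzero.

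First a preliminary remark that lets us iterate the single-vertex case. Let $G''$ be the graph obtained from $G$ by splitting $u$ from $u'$ along $C_1$, so that $G'$ is $G''$ with $v$ split from $v'$ along $C_2$. Since $uv\notin E(G)$, the $u$-split touches no edge at $v$, so $N_{G''}(v)=N_G(v)$ and the second split is legitimate; moreover the two new vertices are ``far apart'': $u'$ has all its neighbours in $N_G(u)\cup\{u\}$, which does not contain $v$, and $v'$ has all its neighbours in $N_G(v)\cup\{v\}$, which contains neither $u$ nor $u'$. Hence $u'v',\,uv',\,u'v\notin E(G')$, and the hypotheses $|C_1|,|C_2|\le d-1$ together with the two linear independence conditions are exactly the data needed to carry out Whiteley's construction at each of the two splits.

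Recall the engine of Whiteley's proof of Theorem~\ref{thm_whiteley}: one chooses a maximal minor of $R(G',q)$ and shows that, regarded as a polynomial in the displacement of the new vertex, it has a nonzero top-degree part, the nonvanishing being forced by the row-independence of $R(G,p)$ and the linear independence of the relevant set $\{p(w)-p(u):w\in C\}$. We run this analysis at $u'$ and at $v'$ at the same time. Because the rows of $R(G',q)$ incident to $u'$ are disjoint from those incident to $v'$ (there is no edge $u'v'$), and the new column blocks indexed by $u'$ and by $v'$ are disjoint from one another and from the columns used by a maximal minor of $R(G,p)$, Whiteley's row and column operations can be performed at $u'$ and, independently, at $v'$; the resulting transformed matrix is block triangular, and reading off its determinant gives a maximal minor $\Delta(z)$ of $R(G',q)$ with
\[
\Delta(z)\;=\;\pm\,\Delta_0\cdot f_u(z)\cdot f_v\!\big(\tau(z)\big),
\]
where $\Delta_0$ is a maximal minor of $R(G,p)$, $f_u\in\mathbb R[z_1,\dots,z_d]$ is the splitting polynomial of the $u$-split (nonzero because $\{p(w)-p(u):w\in C_1\}$ is linearly independent), and $f_v\in\mathbb R[w_1,\dots,w_d]$ is the splitting polynomial of the $v$-split (nonzero because $\{p(x)-p(v):x\in C_2\}$ is linearly independent). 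Now $\Delta_0\neq0$ since $R(G,p)$ is row-independent; since $\tau$ is non-singular, the substitution $w\mapsto\tau(z)$ sends the nonzero polynomial $f_v$ to a nonzero polynomial $f_v(\tau(z))\in\mathbb R[z_1,\dots,z_d]$; and $f_u\neq0$. Since $\mathbb R[z_1,\dots,z_d]$ is an integral domain, $\Delta(z)$ is not the zero polynomial, so any $z\in\mathbb R^d$ outside its zero set gives a framework $(G',q)$ with $R(G',q)$ row-independent, as required.

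The step I expect to be the main obstacle is justifying the product factorization of $\Delta$. The delicate point is that $C_1$ and $C_2$ may overlap, since $u$ and $v$ may share neighbours, so the column blocks touched by the $u'$-rows and by the $v'$-rows need not be disjoint and the two splits do not literally act on disjoint submatrices; one must arrange Whiteley's reductions so that, beyond the columns used by $\Delta_0$, the reduction at $u'$ uses only the $u'$-column block and the reduction at $v'$ uses only the $v'$-column block, after which the two reductions commute and the relevant maximal minor becomes block triangular with diagonal blocks contributing $\Delta_0$, $f_u(z)$ and $f_v(\tau(z))$ respectively. An equivalent route is via self-stresses: a self-stress $\omega$ of $(G',q)$ pushes down, through the $v$-split and then the $u$-split, to a self-stress of $(G,p)$ plus a ``defect'' supported on $(\{u\}\cup N_G(u))\cup(\{v\}\cup N_G(v))$ whose restriction to $N_G(u)\cup\{u\}$ is parallel to $z$ and whose restriction to $N_G(v)\cup\{v\}$ is parallel to $\tau(z)$; non-adjacency of $u$ and $v$ ensures there are no cross terms, and genericity of $z$ — hence of $\tau(z)$, as $\tau$ is non-singular — forces the defect to vanish, so $\omega=0$.
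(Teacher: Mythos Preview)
Your factorisation claim $\Delta(z)=\pm\Delta_0\cdot f_u(z)\cdot f_v(\tau(z))$ is where the argument breaks, but not for the reason you flag. The difficulty has nothing to do with $C_1\cap C_2$; it is already present for a \emph{single} split whenever the set $D$ of edges moved from $u$ to $u'$ is nonempty. In that case the rows $u'w$, $w\in D$, replace the old rows $uw$, and their entries in the $w$-columns are $p(w)-p(u)-z$ rather than $p(w)-p(u)$; after your row and column operations the ``old'' block is no longer a minor of $R(G,p)$ but a $z$-perturbation of it, so the matrix is not block triangular and the determinant does not factor. A concrete counterexample: $d=2$, $G$ the triangle on $\{u,w_1,w_2\}$ with $p(u)=(0,0)$, $p(w_1)=(1,0)$, $p(w_2)=(0,1)$, split with $C=\{w_1\}$, $D=\{w_2\}$. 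Taking the old columns $\{u_x,w_{1,y},w_{2,y}\}$ (where $\Delta_0=-1$) together with both $u'$-columns gives the $5\times5$ minor $-z_2(z_1+z_2-1)$, whereas your formula predicts $\Delta_0\cdot f_u(z)=\mp z_2$. The block-triangular picture you describe is valid only when $D_1=D_2=\emptyset$, and the application in Theorem~\ref{thm_whiteley_pcs_contraction} genuinely needs nonempty $D$.

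The paper's proof avoids this by not attempting an exact factorisation. It fixes a direction $y\in\R^d$ lying outside the spans $\lin\{p(u)-p(w):w\in C_1\}$ and $\lin\{p(v)-p(x):x\in C_2\}$ and outside their $\tau$-preimages (each has dimension at most $d-1$), and sets $z=ty$ for a scalar $t$. Then Whiteley's original perturbation argument applies verbatim at each split: a suitable minor, viewed as a polynomial in the single variable $t$, has lowest-order coefficient equal (up to sign) to $\Delta_0$ times determinants built from $y$, $\tau(y)$ and the $C_i$-vectors, nonzero by the choice of $y$; hence the minor is nonzero for small $t\neq0$. Your idea that non-adjacency of $u,v$ decouples the two splits is exactly right, and your proof can be repaired along these lines---replace the exact product by a lowest-order-in-$t$ factorisation---but as written the block-triangular step is a genuine gap.
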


\begin{proof}[Proof sketch.]
    Choose $ y \in \mathbb R^d$ that is not an element of any of the 
    spaces 
    $\lin\{p(u) - p(w): w \in C_1\}$, $\lin\{p(v) - p(x): x \in C_2\}$,
    $\tau^{-1}(\lin\{p(u) - p(w): w \in C_1\})$ or $\tau^{-1}(\lin\{p(v) - p(x): x \in C_2\})$. This 
    is possible since each of these linear spaces 
    has dimension at most $d-1$. For $t \in \mathbb R$, let $q_t: V(G')
    \rightarrow \mathbb R^d$ be defined by 
    $q_t(w) = p(w), w \in V(G)$, $q_t(u') = p(u)+ty$, 
    $q_t(v') = p(v) +\tau(ty)$. Now the argument from
    the proof of 
    \cite[Proposition 1]{Whiteley_splitting} can be used to 
    show that,
    for sufficiently small non-zero $t$, $R(G',q_t)$ 
    is row-independent.
\end{proof}

We next apply Theorem \ref{thm_2v_split} to the setting of 
$\Gamma$-rigidity. 
Suppose that $G=(V,E)$ has 
a non-adjacent vertex pairing $*:X \rightarrow X$ and $x, y \in X$ with $xy,x^*y^*\in E$. 
If $x^*y, xy^* \not\in E$ then the restriction 
of $*$ to $X\sm \{y,y^*\}$
is a non-adjacent vertex pairing of $(G/xy)/x^*y^*$. We will abuse notation and continue to  
use $*$ for this vertex pairing of $(G/xy)/x^*y^*$.

\begin{theorem}\label{thm_whiteley_pcs_contraction}
Let $\Gamma$ be a point group of order two in $\mathbb{R}^d$.
    Let $(G,\ast)$ be a graph with a non-adjacent vertex pairing $\ast:X\rightarrow X$, $x,y \in X$ such that $xy, x^*y^* \in E(G)$ and $xy^*, x^*y \not\in E(G)$. Suppose that
    there exist $C \subset N_G(x)\cap N_G(y)$ and $D \subset N_G(x^*) \cap N_G(y^*)$ such that $|C|, |D| = d-1$ 
    and $|X_C|, |X_D| \leq 2$. Let $G' = (G/xy)/x^*y^*$. If $(G',*)$
    is $\Gamma$-rigid, then $(G,*)$ is $\Gamma$-rigid.  
\end{theorem}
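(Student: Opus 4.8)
The plan is to follow the proof of Lemma~\ref{lem_whiteley_pcs_contraction2}, replacing the single application of Whiteley's Vertex Splitting by an application of its two-vertex variant, Theorem~\ref{thm_2v_split}, with the non-singular linear transformation $\tau$ taken to be the matrix $\gamma=I_{t,d}$ generating $\Gamma$ (after choosing coordinates so that $\Gamma=\Gamma_{t,d}$). Start with a generic $\Gamma$-framework $(G',\ast,p)$; since $(G',\ast)$ is $\Gamma$-rigid it is infinitesimally rigid. The hypotheses $xx^*,xy^*,x^*y\notin E(G)$ (together with $yy^*\notin E(G)$, which holds as $\ast$ is non-adjacent) ensure that $C,D\subseteq V(G')$, that $C\subseteq N_{G'}(x)$ and $D\subseteq N_{G'}(x^*)$, and that $X_{C\cup\{x\}}=X_C$ and $X_{D\cup\{x^*\}}=X_D$. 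As in the proof of Lemma~\ref{lem_whiteley_pcs_contraction2}, genericity of $p$ together with $|X_C|,|X_D|\le 2$ then force $p(C\cup\{x\})$ and $p(D\cup\{x^*\})$ to be affinely independent; equivalently $\{p(x)-p(z):z\in C\}$ and $\{p(x^*)-p(w):w\in D\}$ are each linearly independent.

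Since the vertices in $\{x,x^*\}\cup C\cup D$ are distinct and the two direction sets above are independent, the $2(d-1)$ rows of $R(G',p)$ corresponding to the edges $\{xz:z\in C\}\cup\{x^*w:w\in D\}$ are linearly independent; extend them to a row basis to obtain a spanning subgraph $H'\subseteq G'$ with $C\subseteq N_{H'}(x)$, $D\subseteq N_{H'}(x^*)$, with $(H',p)$ infinitesimally rigid and $R(H',p)$ row-independent (so $|E(H')|=d|V(H')|-\binom{d+1}{2}$). Put $Y_1=N_G(y)\setminus(N_G(x)\cup\{x\})$ and $Y_2=N_G(y^*)\setminus(N_G(x^*)\cup\{x^*\})$; these are precisely the ``new'' edges created by the two contractions. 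Now apply Theorem~\ref{thm_2v_split} to $(H',p)$ with $u=x$, $v=x^*$, $C_1=C$, $C_2=D$ and $\tau=\gamma$, splitting $y$ off $x$ along $C$ with split-off set $D_1:=N_{H'}(x)\cap Y_1$, and $y^*$ off $x^*$ along $D$ with split-off set $D_2:=N_{H'}(x^*)\cap Y_2$. This produces $\xi\in\mathbb{R}^d$ such that the resulting graph $H''$ on vertex set $V(G)$, realised by $q$ with $q=p$ on $V(G')$, $q(y)=p(x)+\xi$ and $q(y^*)=p(x^*)+\gamma(\xi)$, has $R(H'',q)$ row-independent.

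The step requiring care is to verify that $H''$, equipped with the vertex pairing $\ast$ extended by $y\leftrightarrow y^*$, is a spanning subgraph of $(G,\ast)$. With the above choice of $D_1,D_2$ one checks that every edge of $H''$ is an edge of $G$: edges of $H'$ avoiding $x,x^*$ already lie in $G$ (the new contraction edges all meet $x$ or $x^*$); the edges at $x$ not routed to $y$ join $x$ to $N_G(x)$, and symmetrically at $x^*$; the edges newly incident to $y$ join it to $C\cup D_1\cup\{x\}\subseteq N_G(y)$, and symmetrically for $y^*$; and $xy^*,x^*y,yy^*\notin E(H'')$, so the extended pairing is again non-adjacent. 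A direct count gives $|E(H'')|=|E(H')|-|D_1|-|D_2|+(|C|+|D_1|+1)+(|D|+|D_2|+1)=|E(H')|+2d=d|V(H'')|-\binom{d+1}{2}$. Hence row-independence of $R(H'',q)$ gives $\rank R(H'',q)=d|V(H'')|-\binom{d+1}{2}$, and since the points $q(V(H''))$ affinely span $\mathbb{R}^d$ (they contain the generic configuration $p(V(G'))$, and $|V(G')|\ge d+1$ because $\{x,y,x^*,y^*\}\cup C\subseteq V(G)$), $(H'',q)$ is infinitesimally rigid.

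Finally $q(y^*)=p(x^*)+\gamma(\xi)=\gamma(p(x))+\gamma(\xi)=\gamma(q(y))$, so $(H'',\ast,q)$ is a $\Gamma$-framework; as $H''$ is a spanning subgraph of $G$ with the same vertex pairing and adding edges preserves infinitesimal rigidity, $(G,\ast,q)$ is an infinitesimally rigid $\Gamma$-symmetric realisation, whence $(G,\ast)$ is $\Gamma$-rigid. I expect the main obstacle to be the combinatorial bookkeeping just sketched — arranging the two split-off sets so that the double split of $H'$ lands inside $G$ itself rather than inside the graph completed by the contractions — together with the routine but essential genericity argument that allows the edges $\{xz:z\in C\}\cup\{x^*w:w\in D\}$ to be taken inside a row basis of $R(G',p)$.
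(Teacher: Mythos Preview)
Your proof is correct and takes essentially the same approach as the paper's: pass to a generic $\Gamma$-framework on $G'$, extract a row-independent spanning subgraph containing the edges from $x$ to $C$ and from $x^*$ to $D$, and apply Theorem~\ref{thm_2v_split} with $\tau=\gamma$. You are in fact more careful than the paper in explicitly choosing the split-off sets $D_1,D_2$ so that the double split $H''$ visibly sits inside $G$ (the paper simply asserts ``$G''$ is a spanning subgraph of $G$''); the only minor slip is the claim that the vertices in $\{x,x^*\}\cup C\cup D$ are all distinct (they need not be, since $C\cap D$ may be nonempty), but the row-independence conclusion still holds by looking at the $x$- and $x^*$-column blocks of $R(G',p)$.
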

\begin{proof}
    Let $(G',\ast,p)$ be a generic $\Gamma$-framework. 
    Then $(G',p)$ is infinitesimally rigid by assumption. 
    Let $I = \{xv: v\in C\} \cup \{x^*w: w \in D\}$. 
    Observe that $X_{C \cup \{x\}} = X_C$ and $X_{D \cup \{x^*\}} = X_D$. 
    Since $(G',\ast,p)$ is a generic $\Gamma$-framework and $|X_C|, |X_D| \leq 2$, it follows that both $\{p(w) - p(x): w \in C\}$ 
    and $\{p(z) - p(x^*): z \in D\}$ are  linearly independent. This also implies  that the set of rows of $R(G',p)$ labelled by $I$ is linearly independent. 
    Choose a maximal independent row-set that contains the row-set labeled by $I$ and let $J$ be the corresponding set of edges of $G$. 
    Since $(G',p)$ is infinitesimally rigid, it follows that $J$ spans $V(G')$ and $|J| = d|V(G')| - \binom{d+1}2$.
    Let $G'[J]=(V(G'),J)$. 
    We apply Theorem \ref{thm_2v_split} (with $\tau$ being the non-identity element of $\Gamma$)
    to the framework $(G'[J],p)$ to obtain a $\Gamma$-framework $(G'',\ast,q)$, where 
    $G''$ is obtained from $G'[J]$ by splitting  $x$ from $y$ along $C$ and then splitting $x^*$ from $y^*$ along $D$. 
    By Theorem \ref{thm_2v_split}, $(G'',q)$ is infinitesimally rigid.
    Since $G''$ is a spanning subgraph of $G$, 
    $(G,\ast, q)$ is the required infinitesimally rigid $\Gamma$-symmetric realisation of $(G,\ast)$.
\end{proof}

We emphasise again that, for an arbitrary graph $G = (V,E)$ with a non-adjacent vertex pairing $*:X \rightarrow X$, the hypothesis that $xy \in E[X]$ does not imply that 
$x^*y^* \in E$. Hence, in order to apply Theorem \ref{thm_whiteley_pcs_contraction} to $G$, we must check that $xy,x^*y^*\in E$  and  similarly that $xy^*, x^*y\not\in E$.

\section{Background on Simplicial Complexes}
\label{sec_bk}

We now consider simplicial complexes.
We summarise some notation and results from \cite{CJT1} that we will
need later. We refer the reader to \cite{CJT1} for more details on  this material.

Our main results will apply to a certain class of pure abstract simplicial $k$-complexes. However, as in \cite{CJT1},
it will be convenient for us to consider a larger family of complexes in which multiple copies of the 
same facet may exist. Thus we define a {\em simplicial $k$-multicomplex} to be a multiset whose elements 
are $(k+1)$-sets. Suppose that $\cS$ is a simplicial $k$-multicomplex. 
For $0\leq j\leq k$, a $j$-face of $\cS$ is a $(j+1)$-set $F$ that is a subset of some 
element of $\cS$. 
In the case that $\cS$ does not contain multiple copies of any $k$-face
we say that $\cS$ is a simplicial $k$-complex. We justify this terminology by noting that a pure abstract simplicial complex 
of dimension $k$, in the usual sense of that term, corresponds to 
a unique simplicial $k$-complex in our sense.
For $k \geq 1$ the graph of $\cS$, denoted $G(\cS)$, is the 
simple graph whose vertex set, $V(\cS)$, is
the set of $0$-faces of $\cS$ and whose edge set, $E(\cS)$, is the set of $1$-faces of $\cS$.

The {\em boundary} of a simplicial $k$ multicomplex $\cS$ is given by $$\partial \cS = \{ F \subset V(\cS): |F| = k, F \text{ is contained in an odd number of 
elements of \cS }\}.$$ By definition $\partial \cS$ is a simplicial $(k-1)$-complex. We say that 
$\cS$ is a {\em simplicial $k$-cycle} if $\partial \cS = \emptyset$ and that $\cS$ is a {\em simplicial $k$-circuit} if 
$\cS$ is a non-empty simplicial $k$-cycle and no proper subset of $\cS$ is a simplicial $k$-cycle.
A {\em trivial simplicial $k$-circuit} is a simplicial $k$-multicomplex comprising of two copies of the same 
$k$-face. Our use of the word `circuit' here comes from matroid theory, since the 
set of simplicial
$k$-circuits contained within any simplicial $k$-multicomplex $\cS$ forms the set of circuits of a matroid on $\cS$. In 
particular the simplicial 1-circuits in a multigraph are the circuits of its graphic matroid. 
Note that if $\cS$ is a simplicial $k$-multicomplex and $\cU \subset \cS$ is a simplicial $k$-cycle then $\partial (\cS\setminus \cU)
= \partial \cS$. This implies that a non-empty simplicial $k$-cycle can be partitioned into a disjoint union of 
simplicial $k$-circuits. 
We will frequently  consider the symmetric difference $\cS \triangle \cT$ of two simplicial $k$-complexes $\cS, \cT$ and use the fact that $\partial (\cS \triangle \cT) = (\partial \cS) \triangle (\partial \cT)$.\footnote{We will only use this operation for simplicial $k$-complexes so do not need to give a definition for the symmetric difference of two multisets.}

We say that a simplicial $k$-multicomplex $\cS$ is {\em strongly 
connected} if, for any distinct $U,W \in \cS$, there is a sequence 
$U = U_1,\dots,U_k = W$ in $\cS$, such that $|U_i \cap U_{i+1}| = k$ for $i = 1,\dots,k-1$. Observe that, if $\cT$ is a maximal strongly connected subset of $\cS$, then $\partial T \subset \partial \cS$. In particular it follows easily from this observation that any simplicial $k$-circuit is strongly connected. 

We next define a contraction operation for two vertices $u,v$ in a simplicial $k$-multicomplex $\cS$. 
Let $\ant(\{u,v\}) = \{U \in \cS: \{u,v\} \not\subset U\}$ be the {\em anti-star} of $\{u,v\}$ in $\cS$.
Then $\cS/uv$ is the simplicial $k$-multicomplex obtained from $\ant(\{u,v\})$ by 
replacing every $k$-face $U$ that contains
$v$ with $U-v+u$. We say that $\cS/uv$ is obtained from $\cS$ by  {\em contracting $v$ onto $u$}. 
Let $\gamma :\ant(\{u,v\}) \rightarrow \cS/uv$ be the canonical bijection. 
Note that our contraction operation 
may create multiple copies of a $k$-face in $\cS/uv$ even when $\cS$ is a simplicial complex. We allow this in order to have the useful property that the set of simplicial $k$-cycles is closed under the contraction operation. Note also that, if every  edge in $E(\cS) \setminus \{u,v\}$ belongs to at least one $k$-face in $\ant(\{u,v\})$, then
$G(\cS/uv) = G(\cS)/uv$, where the right hand side denotes the usual contraction operation on simple graphs.

We next describe Fogelsanger decomposition of simplicial $k$-circuits. Suppose that 
$\cS$ is a nontrivial simplicial $k$-circuit and $uv \in E(\cS)$. Then $\cS/uv$ is a simplicial $k$-cycle so we can express it as 
$\cS/uv = \cS'_1 \sqcup \dots \sqcup \cS'_m$, where $\cS'_j$ is a simplicial $k$-circuit for $1\leq j \leq m$
(this partition is not necessarily unique). Let 
\begin{align*}
    \cS_j &= \gamma^{-1}(\cS'_j), \\
    \cS_j^\dagger &= \{K \subset V(\cS): \{u,v\} \subset K, K-u, K-v \in \partial \cS_j\}, \\
    \cS_j^+ &= \cS_j \cup \cS_j^\dagger.
\end{align*}
We say that $(\cS_1^+,\dots,\cS_m^+)$ is a {Fogelsanger decomposition for $\cS$ at $uv$}. The
properties of this decomposition are summarised in the following lemma which is a restatement of 
\cite[Lemma 3.9]{CJT1}.

\begin{lemma}\label{lem_fog_props}
Suppose  $\cS$ is a nontrivial simplicial $k$-circuit and  $uv\in E(\cS)$. Let $(\cS_1^+,\cS_2^+, \dots, \cS_m^+)$  be a Fogelsanger decomposition of $\cS$ at $uv$.
Then: 
\begin{enumerate}[(a)]
    \item \label{en1:a} $\cS_i^+/uv$ is a simplicial $k$-circuit  for all $1\leq i\leq m$;
    \item \label{en1:b} $\cS_i^+$ is a nontrivial simplicial $k$-circuit for all $1\leq i\leq m$ and each $K\in \cS_i^+\sm\cS$ is a clique of $G(\cS)$ which contains $\{u,v\}$;
    \item \label{en1:c} each $k$-face of $\ant(\{u,v\})$ is a $k$-face in a unique $\cS_i^+$;
    \item \label{en1:d} $\cS = \triangle_{j=1}^m \cS_{j}^+$;
    \item \label{en1:e} $uv\in E(\cS_i^+)$  for all $1\leq i\leq m$ and $\bigcup_{i=1}^m E(\cS_i^+)=E(\cS)$;
    \item \label{en1:f} for all proper $ I\subset \{1,2,\ldots,m\}$, there exists $j\in \{1,2,\ldots,m\}\sm I$  and a $(k+1)$-clique $K$ of $G(\cS)$ such that $K \not\in \cS$ and $K\in (\triangle_{i\in I}\cS_i ^+)\cap \cS_{j}^+$.
\end{enumerate}
\end{lemma}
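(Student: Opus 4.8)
Note first that Lemma~\ref{lem_fog_props} is a restatement of \cite[Lemma~3.9]{CJT1}, so one option is simply to cite it, but here is how I would reconstruct the argument. The engine is a \emph{boundary--contraction identity}: for any simplicial $k$-multicomplex $\cT$ with $uv\in E(\cT)$, counting the $k$-faces of $\cT/uv$ that contain a fixed $(k-1)$-face shows that $\partial(\cT/uv)$ contains no face through $v$, agrees with $\partial\cT$ on faces through neither $u$ nor $v$, and contains a face $F+u$ (where $F$ is a $(k-1)$-set with $u,v\notin F$) exactly when precisely one of $F+u,F+v$ lies in $\partial\cT$. I would also isolate one elementary observation for repeated use: \emph{if $\cW$ is a simplicial $k$-complex with $\partial\cW=\emptyset$ all of whose $k$-faces contain $\{u,v\}$, then $\cW=\emptyset$} --- indeed, for any $k$-face $K$ of such a $\cW$ the $(k-1)$-face $K-u$ would lie in the unique $k$-face $K$ of $\cW$, contradicting $\partial\cW=\emptyset$. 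Granting these, part~\ref{en1:a} is immediate: the members of $\cS_i^\dagger$ all contain $\{u,v\}$, hence are deleted by contraction, so $\cS_i^+/uv=\cS_i/uv=\gamma(\cS_i)=\cS_i'$, which is a simplicial $k$-circuit by construction.

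The heart is part~\ref{en1:b}, and in particular showing $\partial\cS_i^+=\emptyset$. Since no $k$-face of $\cS_i\subseteq\ant(\{u,v\})$ contains both $u$ and $v$, $\partial\cS_i$ is the disjoint union of its faces through neither $u$ nor $v$, through $u$ only, and through $v$ only. Applying the boundary--contraction identity to $\cT=\cS_i$ and using $\partial(\cS_i/uv)=\partial\cS_i'=\emptyset$ forces the ``neither'' part to vanish and forces $F+u\in\partial\cS_i\iff F+v\in\partial\cS_i$ for every $(k-1)$-set $F$ disjoint from $\{u,v\}$; unwinding the definition of $\cS_i^\dagger$ then gives $F+u+v\in\cS_i^\dagger\iff F+u\in\partial\cS_i$. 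I would next compute $\partial\cS_i^\dagger$ face by face: the contributions through $u$ only and through $v$ only exactly reproduce $\partial\cS_i$, while the contributions through both $u$ and $v$ cancel, because a face $G+u+v$ lies in $\partial\cS_i^\dagger$ precisely when the $(k-1)$-set $G+u$ is contained in an odd number of elements of $\partial\cS_i$, which is even since $\partial\partial\cS_i=\emptyset$. Hence $\partial\cS_i^\dagger=\partial\cS_i$ and $\partial\cS_i^+=\partial\cS_i\triangle\partial\cS_i^\dagger=\emptyset$. The remaining assertions of part~\ref{en1:b} are then short: if $K\in\cS_i^+\sm\cS$ then $K\in\cS_i^\dagger$ (as $\cS_i\subseteq\cS$), so $\{u,v\}\subseteq K$ and $K-u,K-v\in\partial\cS_i$, hence every pair in $K$ is an edge of $G(\cS)$ --- using $uv\in E(\cS)$ for the pair $\{u,v\}$ itself --- so $K$ is a clique; for minimality, a nonempty sub-cycle $\cW\subseteq\cS_i^+$ contracts to a sub-cycle of the circuit $\cS_i'$, which is either empty (forcing $\cW$ to be supported on faces through $\{u,v\}$, hence $\cW=\emptyset$ by the elementary observation) or all of $\cS_i'$ (forcing $\cW\supseteq\cS_i$, so that $\cW\triangle\cS_i^+$ is again a cycle supported on faces through $\{u,v\}$ and hence $\cW=\cS_i^+$); finally $\cS_i^+$ is nontrivial since $\cS_i=\gamma^{-1}(\cS_i')\neq\emptyset$ and $\cS_i^+$ has no repeated face ($\cS$ has none and $\cS_i\cap\cS_i^\dagger=\emptyset$).

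For part~\ref{en1:c} I would use that $\gamma$ is a bijection of $\ant(\{u,v\})$ onto $\cS/uv=\bigsqcup_j\cS_j'$, so each $k$-face of $\ant(\{u,v\})$ lies in exactly one $\cS_i$, and hence in exactly one $\cS_i^+$ (the members of each $\cS_j^\dagger$ contain $\{u,v\}$). Part~\ref{en1:d} follows by putting $\cW=\cS\triangle(\triangle_{j=1}^m\cS_j^+)$: by part~\ref{en1:c} every $k$-face not containing $\{u,v\}$ cancels, so $\cW$ is a cycle supported on faces through $\{u,v\}$, whence $\cW=\emptyset$. For part~\ref{en1:e}, $\partial\cS_i\neq\emptyset$ --- otherwise $\cS_i$ would be a nonempty sub-cycle of the circuit $\cS$ that is properly contained in it, since $\ant(\{u,v\})\subsetneq\cS$ --- so $\cS_i^\dagger\neq\emptyset$ (because $\partial\cS_i^\dagger=\partial\cS_i$) and $uv\in E(\cS_i^+)$; the identity $\bigcup_i E(\cS_i^+)=E(\cS)$ is then immediate from parts~\ref{en1:c} and \ref{en1:d}.

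Finally, for part~\ref{en1:f}, given a nonempty proper $I\subseteq\{1,\dots,m\}$ I would set $\cU=\triangle_{i\in I}\cS_i^+$ and $\cV=\triangle_{j\notin I}\cS_j^+$. By part~\ref{en1:c} the $k$-faces of $\cU$ not containing $\{u,v\}$ are exactly $\bigcup_{i\in I}\cS_i$, which is nonempty, and likewise $\cV\neq\emptyset$; since $\cU\triangle\cV=\cS$ is a circuit, $\cU$ and $\cV$ cannot be disjoint, so some $k$-face $K$ lies in both, and then $K\notin\cS$. If $\{u,v\}\not\subseteq K$ then $K$ would lie in some $\cS_i$ with $i\in I$ and some $\cS_j$ with $j\notin I$, contradicting part~\ref{en1:c}; hence $\{u,v\}\subseteq K$, so $K\in\cS_j^\dagger\subseteq\cS_j^+$ for some $j\notin I$, and by part~\ref{en1:b} $K$ is a $(k+1)$-clique of $G(\cS)$ with $K\notin\cS$, as required. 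The step I expect to be the main obstacle is the boundary bookkeeping in part~\ref{en1:b}: setting up the boundary--contraction identity and verifying $\partial\cS_i^\dagger=\partial\cS_i$, in particular that the ``through both $u,v$'' terms disappear via $\partial\partial=0$. Once that is secured, parts~\ref{en1:c}--\ref{en1:f} are essentially formal manipulations with $\gamma$ and with symmetric differences.
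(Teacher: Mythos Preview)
Your reconstruction is correct and matches the approach of \cite{CJT1} (and the paper's own proof of the analogous $\Z_2$-symmetric Lemma~\ref{lem_cs_fog}): the core is the identity $\partial\cS_i^\dagger=\partial\cS_i$ obtained via $\partial\partial=0$, followed by formal symmetric-difference arguments. The paper itself gives no proof here, only the citation, so there is nothing further to compare.

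One small slip worth fixing: you state the boundary--contraction identity for ``$\cT$ with $uv\in E(\cT)$'' but then apply it to $\cT=\cS_i\subseteq\ant(\{u,v\})$, where $uv\notin E(\cS_i)$. The identity you actually want (and correctly use) is for $\cT$ in which no $k$-face contains $\{u,v\}$; under that hypothesis your parity count goes through verbatim.
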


\section{$\Gamma$-rigidity of Simplicial Circuits with a Non-adjacent Vertex Pairing}

\label{sec_fams}

In this section we will consider simplicial circuits with a non-adjacent vertex pairing and use the results of Section \ref{sec_pre_rigidity} to prove that their graphs are 
$\Gamma$-rigid in certain cases.
We begin with an analysis of the graph of a crosspolytope in this context.
These graphs will serve as the base case in  the inductive proof of our main theorem.

For $k \geq 0$ let $e_1,e_2,\ldots, e_{k+1}$ be the standard basis for $\R^{k+1}$.
The {\em $(k+1)$-dimensional crosspolytope} is the convex hull of the set of points $\{\pm e_1,\pm e_2,\ldots, \pm e_{k+1}\}$. 
We will use $\cB_{k}$ to denote the boundary complex of this polytope. It is well known that $\cB_{k}$ is a simplicial $k$-complex whose vertex set
is $\{\pm e_i: 1\leq i \leq k+1\}$. Moreover, the $k$-faces  are 
precisely the transversals of $\{ \{\pm e_1\}, \{\pm e_2\}, \dots,\{\pm e_{k+1}\}\}$.
In particular for any distinct vertices  $u,v$ of $\cB_{k}$, $uv$ is an edge $\cB_{k}$ if and only if $v \neq -u$. 
Hence, there is the unique non-adjacent vertex pairing  $\ast:V(\cB_k)\rightarrow V(\cB_k)$ that pairs antipodal vertices of $\cB_k$. 
Using this unique $\ast$, the graph $G(\cB_k)$ of $\cB_k$ is $\mathbb{Z}_2$-symmetric. 

We now check the $\Gamma_{t,k+1}$-rigidity of $(G(\cB_{k}), \ast)$. 
We need the following operation and result due to Whiteley. 
Given a graph $G=(V,E)$, the {\em cone} of $G$ is the graph $G^v$ obtained by adding a new vertex $v$ and all edges from $v$ to $V$. 

\begin{lemma}[Coning~\cite{whiteley1983cones}]
\label{lem:cone} Suppose that $G=(V,E)$ is a graph, $G^v$ is the cone of $G$ and $p$ is a realisation of $G^v$ in $\R^{k+1}$
such that $p(V)$ is contained in a hyperplane $H$, $p(v)\not\in H$ and $(G,p|_V)$ is infinitesimally rigid in $H$ (viewed as a copy of $\R^{k}$). Then $(G^v,p)$ is   infinitesimally rigid in $\R^{k+1}$.
\end{lemma}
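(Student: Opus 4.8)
The plan is to argue directly with infinitesimal motions, following Whiteley's original coning argument rather than passing through rigidity matrices. First I would choose coordinates so that the hyperplane is $H=\{x\in\R^{k+1}: x_{k+1}=0\}$ and, using $p(v)\notin H$, so that $p(v)=(0,\dots,0,h)$ with $h\neq 0$; for $w\in V$ I write $p(w)=(q(w),0)$, so that $q:V\rightarrow\R^{k}$ is exactly the realisation of $G$ that is assumed infinitesimally rigid in $H\cong\R^{k}$. Let $\dot p:V(G^v)\rightarrow\R^{k+1}$ be an arbitrary infinitesimal motion of $(G^v,p)$. Since a constant map is a trivial motion, after subtracting the translation $w\mapsto \dot p(v)$ I may assume $\dot p(v)=0$; write $\dot p(w)=(\dot q(w),a(w))$ with $\dot q(w)\in\R^{k}$ and $a(w)\in\R$.

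Next I would read off the two families of edge equations. For the edge $vw$ (one for every $w\in V$), the condition $(p(v)-p(w))\cdot(\dot p(v)-\dot p(w))=0$ together with $\dot p(v)=0$ gives $a(w)=\tfrac1h\, q(w)\cdot\dot q(w)$. For an edge $wz\in E(G)$, since $p(w)-p(z)=(q(w)-q(z),0)$, the condition reduces to $(q(w)-q(z))\cdot(\dot q(w)-\dot q(z))=0$; that is, $\dot q$ is an infinitesimal motion of $(G,q)$ in $\R^{k}$. By hypothesis $(G,q)$ is infinitesimally rigid in $H$, so $\dot q$ is trivial: there are a skew-symmetric matrix $S_0\in\R^{k\times k}$ and a vector $t_0\in\R^{k}$ with $\dot q(w)=S_0 q(w)+t_0$ for all $w\in V$. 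Substituting this into the expression for $a(w)$ and using $q(w)\cdot S_0 q(w)=0$ (by skew-symmetry) yields $a(w)=\tfrac1h\, t_0\cdot q(w)$.

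Finally I would exhibit a single trivial motion of $\R^{k+1}$ that agrees with $\dot p$ on all of $V(G^v)$. Setting $b=-t_0/h$, take the skew-symmetric matrix $S=\begin{pmatrix}S_0 & b\\ -b^{T} & 0\end{pmatrix}$ and the vector $t=(-hb,0)=(t_0,0)$. A direct check shows $Sp(v)+t=0=\dot p(v)$ and, for each $w\in V$, $Sp(w)+t=(S_0 q(w)+t_0,\,-b\cdot q(w))=(\dot q(w),a(w))=\dot p(w)$. Hence the normalised $\dot p$ is trivial, and adding back the translation removed at the start shows the original $\dot p$ is trivial; therefore $(G^v,p)$ is infinitesimally rigid. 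The one step that needs genuine care is this last one — guessing the block form of the $(k+1)\times(k+1)$ skew-symmetric matrix and verifying that it reproduces $\dot p$ at the apex $v$ as well as at all vertices of $G$ — but it is a short linear-algebra computation once the correct ansatz is written down.
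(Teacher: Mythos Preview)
Your argument is correct. The paper does not actually prove this lemma; it is quoted as a known result of Whiteley with a citation and no proof, so there is nothing to compare against. Your direct computation --- normalising so that $\dot p(v)=0$, using the cone edges to solve for the last coordinate $a(w)$, invoking rigidity of $(G,q)$ in $H$ to write $\dot q(w)=S_0q(w)+t_0$, and then exhibiting the block skew-symmetric matrix that extends this to a trivial motion of $\R^{k+1}$ --- is a clean and complete proof. The only cosmetic point is that after fixing $H=\{x_{k+1}=0\}$ the apex is a priori $p(v)=(c,h)$ for some $c\in\R^k$; you are silently translating within $H$ to make $c=0$, which is harmless but could be said explicitly.
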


\begin{lemma}\label{lem:cross} Suppose $k \geq 2$ and $\Gamma=\Gamma_{t,k+1}$ for some $0 \leq t \leq k$.
Let $G(\cB_{k})$ be the graph of the $(k+1)$-dimensional crosspolytope and $\ast:V(\cB_{k})\rightarrow V(\cB_{k})$  be the non-adjacent vertex pairing on $V(\cB_{k})$.
Then $(G(\cB_{k}),\ast)$ is $\Gamma$-rigid in $\mathbb{R}^{k+1}$  unless $k=2$ and $\Gamma$ is a rotation group of order two.
\end{lemma}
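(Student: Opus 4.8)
The plan is to build an explicit infinitesimally rigid $\Gamma$-symmetric realisation of $(G(\cB_k),\ast)$ by induction on $k$, using the coning lemma (Lemma~\ref{lem:cone}) to climb dimensions, and to treat the excluded case $k=2$, $\Gamma$ a rotation group separately via the edge count of Lemma~\ref{lem:symmetric_maxwell}.

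First I would dispose of the non-rigid case. When $k=2$ and $\Gamma=\Gamma_{1,3}$ is the half-turn rotation group, the graph $G(\cB_2)$ is the octahedron with $|V|=6$, $|E|=12$, and the antipodal pairing is a free involution, so $(G(\cB_2),\ast)$ is a $\mathbb Z_2$-symmetric graph; by Example~\ref{ex:2} a $\Gamma$-rigid realisation would need $|E|\geq 3\cdot 6-4=14>12$, so no such realisation exists. For the remaining cases I would proceed by induction. The base case is $k=2$ with $\Gamma$ a point inversion ($t=0$) or a reflection ($t=2$). Here one exhibits a concrete symmetric realisation of the octahedron in $\R^3$: place the three antipodal pairs at generic points $\pm v_1,\pm v_2,\pm v_3$ (for $t=0$) respectively at generic reflections $v_i,\sigma(v_i)$ (for $t=2$), and verify infinitesimal rigidity directly — e.g. via the edge count and a rank computation, or by exhibiting the octahedron as a suitable gluing of rigid pieces. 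This is the one place where a short explicit calculation is unavoidable.

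For the inductive step, note that $\cB_{k}$ is obtained from $\cB_{k-1}$ by adding the antipodal pair $\pm e_{k+1}$, each joined to all vertices of $\cB_{k-1}$; thus $G(\cB_k)$ is obtained from $G(\cB_{k-1})$ by coning twice, once with $e_{k+1}$ and once with $-e_{k+1}$. Given $\Gamma=\Gamma_{t,k+1}$ with $0\le t\le k$, I would choose the coordinate hyperplane $H=\{x_{k+1}=0\}\cong\R^{k}$ invariant under the restriction $\Gamma'$ of $\Gamma$ to $H$, realise $(G(\cB_{k-1}),\ast)$ infinitesimally rigidly and $\Gamma'$-symmetrically inside $H$ using the induction hypothesis, then place the new pair at $\pm z e_{k+1}$ for a generic scalar $z$. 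This is $\Gamma$-symmetric: if $t\le k-1$ then the last coordinate is an eigenvalue $-1$ direction and $\gamma(ze_{k+1})=-ze_{k+1}=(-z)e_{k+1}$ is the image of $-e_{k+1}$; if $t=k$ then the reflection fixes $e_{k+1}$, but then $\Gamma'$ on $H$ is the point-inversion group $\Gamma_{0,k}$, which is covered by induction. Applying Lemma~\ref{lem:cone} first to cone with $ze_{k+1}$ (so the rigid framework in $H$ becomes rigid in $\R^{k+1}$) and then again with $-ze_{k+1}$ (coning the already-rigid framework) yields an infinitesimally rigid realisation of $G(\cB_k)$, which is $\Gamma$-symmetric by construction. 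One must check that the induction hypothesis applies to $(G(\cB_{k-1}),\ast)$ in $\R^{k}$: when $k-1\ge 3$ there is no exception; when $k-1=2$ the excluded case is $\Gamma'$ a rotation group, i.e. $t'=1$ in $\R^3$, so I must verify that the restriction $\Gamma'$ never lands in that case, or reorder the coordinates so that it does not — e.g. for $t=2$, $k=3$ choose $H$ so that $\Gamma'=\Gamma_{2,3}$ (a reflection in $\R^3$), not $\Gamma_{1,3}$.

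The main obstacle I anticipate is precisely this bookkeeping of which coordinate hyperplane to cone along, so that the induced group on $H$ is again of the form $\Gamma_{t',k}$ with the right $t'$ and, crucially, never equals the forbidden half-turn in $\R^3$ at the bottom of the recursion. A clean way to organise it: observe that $\Gamma_{t,k+1}$ restricted to the hyperplane $\{x_{k+1}=0\}$ is $\Gamma_{t-1,k}$ if $t\ge 1$ and $\Gamma_{0,k}$ if $t=0$; iterating, one reduces to a two-dimensional claim about whether the sequence of $t$-values can be steered away from the pair $(t',k')=(1,3)$. Since for $k'=3$ we only hit trouble at $t'=1$, and we always have the freedom to cone along either an eigenvalue-$(+1)$ or eigenvalue-$(-1)$ coordinate when both are available (which they are whenever $1\le t\le k-1$), this can be arranged. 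The secondary obstacle is verifying the $k=2$ base cases for $t=0$ and $t=2$; for $t=0$ one can alternatively bootstrap from the fact (established in the non-symmetric theory) that the octahedron is generically rigid in $\R^3$ together with a symmetry-averaging or Lemma~\ref{lem:symmetric_maxwell}-type argument, while for $t=2$ a direct gluing of two rigid triangular "caps" via Theorem~\ref{thm_pcs_gluing} suffices.
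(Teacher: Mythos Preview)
Your strategy—induction on $k$ via coning—is the same as the paper's, but the inductive step has a real gap. You cannot ``apply Lemma~\ref{lem:cone} \ldots\ again with $-ze_{k+1}$'': that lemma lifts a framework rigid in a hyperplane $H\cong\R^k$ to one rigid in $\R^{k+1}$, and after the first cone your framework already affinely spans $\R^{k+1}$, so the lemma no longer applies to the second apex. The paper fixes exactly this point by applying Lemma~\ref{lem:cone} separately to the two single cones $G(\cB_k)-x_{k+1}$ and $G(\cB_k)-x_{k+1}^*$ (each is one cone over $G(\cB_{k-1})$ with apex off $H$), and then gluing the two rigid subframeworks, which share the $2k\geq k+1$ vertices of $\cB_{k-1}$, via Theorem~\ref{thm_pcs_gluing}.

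Your hyperplane bookkeeping also needs repair. The restriction of $\Gamma_{t,k+1}$ to $\{x_{k+1}=0\}$ is $\Gamma_{t,k}$, not $\Gamma_{t-1,k}$; and for $t=k$ the last coordinate is negated (not fixed), so that restriction is the identity on $H$, not $\Gamma_{0,k}$. In particular, coning only along a $-1$ eigendirection with the new pair at $\pm ze_{k+1}$ keeps $t$ fixed and cannot reach all $(t,k)$ from the two base cases $t\in\{0,2\}$ at $k=2$. The paper's inductive claim shows that from a $\Gamma_{t,k}$-rigid realisation of $\cB_{k-1}$ one obtains both a $\Gamma_{t,k+1}$- and a $\Gamma_{t+1,k+1}$-rigid realisation of $\cB_k$; the second comes from coning along a $+1$ eigendirection and placing \emph{both} new vertices at the same point $e_1$ (which is $\Gamma$-symmetric since $\gamma(e_1)=e_1$), followed by the same cone-and-glue argument. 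You gesture at using a $+1$ direction but never address that the pair must then be coincident.
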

\begin{proof}
Denote the set of vertices of $\cB_{k}$ by $\{x_1, x_1^*, \dots, x_{k+1}, x_{k+1}^*\}$.
We show by induction on $k$ that $G(\cB_k)$ has an infinitesimally  rigid $\Gamma_{t,k+1}$-symmetric realisation $p_{t,k}$ in $\R^{k+1}$. For the base case when $k=2$, it is straightforward to check that: $(G(\cB_{2}),p_{0,2})$  is infinitesimally rigid when $p_{0,2}(x_i)=e_i$  and $p_{0,2}(x_i^*)=-e_i$ for all $1\leq i\leq 3$; $(G(\cB_{2}),p_{2,2})$  is infinitesimally rigid when $p_{2,2}(x_i)=e_i+e_3$  and $p_{2,2}(x_i^*)=e_i-e_3$ for all $1\leq i\leq 3$.  Hence we may assume that $k\geq 3$.  

The inductive step will follow immediately from:

\begin{claim}
Suppose $(G(\cB_{k-1}),\ast)$ has an  infinitesimally rigid $\Gamma_{t,k}$-symmetric realisation $p_{t,k-1}$  in $\R^k$  for some $k\geq 3$ and $0\leq t\leq k-1$. 
Then $(G(\cB_{k}),\ast)$ has both an infinitesimally rigid $\Gamma_{t,k+1}$-symmetric realisation  and an infinitesimally rigid $\Gamma_{t+1,k+1}$-symmetric realisation in $\mathbb R^{k+1}$.
\end{claim}
\begin{proof}   
We may assume that $p_{t,k-1}$  is a  generic $\Gamma_{t,k}$-symmetric realisation of $(G(\cB_{k-1}),\ast)$  in $\R^k$ and that $V(\cB_{k-1}) = 
\{x_1,x_1^*,\dots,x_k,x_k^*\}$.

We first extend $p_{t,k-1}$ to an infinitesimally rigid $\Gamma_{t,k+1}$-symmetric realisation $p_{t,k}$ of  $(G(\cB_{k}),\ast)$ in $\mathbb R^{k+1}$ by putting 
$p_{t,k}(z) = (p_{t,k-1}(z),0)$ for all $ z\in V(\cB_{k-1})$ and 
$p_{t,k}(x_{k+1})=e_{k+1}=-p_{t,k}(x^*_{k+1})$.  
Then the restrictions of $p_{t,k}$ to both $G(\cB_{k})-x_{k+1}$ and $G(\cB_{k})-x_{k+1}^*$ are infinitesimally rigid by Lemma \ref{lem:cone}. We can now use Theorem \ref{thm_pcs_gluing} to deduce that $p_{t,k}$ is an infinitesimally rigid realisation of $G(\cB_{k})$.

A similar proof works for $p_{t+1,k}$. We construct  
a realisation  $p_{t+1,k}$ of $G(\cB_{k})$ from $p_{t,k-1}$ by putting
$p_{t+1,k}(z) = (0,p_{t,k-1}(z))$ for all $ z\in V(\cB_{k-1})$ and 
$p_{t+1,k}(x_{k+1})=e_{1}=p_{t+1,k}(x^*_{k+1})$.   Then we can use  Lemma \ref{lem:cone} and Theorem \ref{thm_pcs_gluing} to deduce that  
$(G(\cB_{d}),p_{t+1,k})$ is infinitesimally rigid. 
\end{proof}
\end{proof}

We next use Lemma \ref{lem:cross} to analyze the case when $X\neq V(\cB_k)$. 

\begin{lemma}\label{lem:cross1}
   Let $k\geq 2$, $X\subseteq V(\cB_k)$, $\ast:X\to X$ be a non-adjacent vertex pairing of $G(\cB_k)$ and $\Gamma$ be a point group of $\R^{k+1}$ of order two. Suppose that $|X|\leq 2k$. 
Then $(G,\ast)$ is $\Gamma$-rigid in $\mathbb R^{k+1}$.
\end{lemma}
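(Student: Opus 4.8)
The plan is to reduce to the fully symmetric case treated in Lemma \ref{lem:cross} by adding the ``missing'' pairs one at a time, using the Gluing Theorem (Theorem \ref{thm_pcs_gluing}) to absorb small highly-symmetric pieces. Write $V(\cB_k) = \{x_1,x_1^*,\dots,x_{k+1},x_{k+1}^*\}$ with the antipodal pairing, and note that since $\ast : X \to X$ is a \emph{non-adjacent} vertex pairing of $G(\cB_k)$ and the only non-adjacent pairs in $G(\cB_k)$ are the antipodal ones, $\ast$ must coincide with the antipodal pairing on $X$; in particular $X$ is a union of antipodal pairs, say $X = \{x_i, x_i^* : i \in A\}$ for some $A \subseteq \{1,\dots,k+1\}$ with $|A| = |X|/2 \le k$. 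Choose any index $j \notin A$ (possible since $|A| \le k < k+1$). The first move is to enlarge $X$ to $X' = X \cup \{x_j, x_j^*\}$, extending $\ast$ antipodally, and show $(G(\cB_k),\ast')$ is $\Gamma$-rigid; iterating this one index at a time until $A$ grows to all of $\{1,\dots,k+1\}$ lands us at $X' = V(\cB_k)$, where Lemma \ref{lem:cross} applies --- with the caveat about $k=2$ and rotation groups, which I address at the end.

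For the single enlargement step, I would split $G(\cB_k)$ as $H_1 \cup H_2$ where $H_1 = G(\cB_k) - \{x_j, x_j^*\}$ (a copy of the graph of the $k$-dimensional crosspolytope on $2k$ vertices, minus nothing extra --- actually $G(\cB_k) - \{x_j,x_j^*\} = G(\cB_{k-1})$) and $H_2$ is the subgraph induced by a small set containing $x_j, x_j^*$ together with enough of the other vertices that $H_2$, equipped with the antipodal pairing restricted to $\{x_j,x_j^*\}$ only, is $\Gamma$-rigid and the overlap $V(H_1) \cap V(H_2)$ is large enough for Theorem \ref{thm_pcs_gluing}(a). The natural choice is $V(H_2) = \{x_j, x_j^*\} \cup \{x_i, x_i^* : i \in B\}$ for a $(k-1)$-subset $B \subseteq \{1,\dots,k+1\}\setminus\{j\}$: then $H_2$ is a copy of $G(\cB_k)$ on $2k$ vertices missing $k-1$ antipodal pairs, so by the (not-yet-proven, but for small pieces provable directly, or inductively on $|X|$) statement it is $\Gamma$-rigid --- to avoid circularity I would instead run the whole argument as an induction on $2k - |X|$, so that when handling $(G(\cB_k),\ast)$ with $|X| < 2k$ the piece $H_2$ has strictly more missing pairs replaced by genuine symmetry constraints only where needed. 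Concretely, the cleanest route: induct on $|X|$; base case $|X|=0$ is the ordinary (non-symmetric) rigidity of $G(\cB_{k-1})$'s cone, or better, observe $G(\cB_k)$ is generically rigid in $\R^{k+1}$ by a direct coning argument as in Lemma \ref{lem:cross}; inductive step adds one antipodal pair via gluing a $\Gamma$-rigid crosspolytope-graph $H_2$ on $2k$ vertices (which \emph{is} one of the base cases of Lemma \ref{lem:cross} when $k \ge 3$, hence genuinely $\Gamma$-rigid) to $H_1 = G(\cB_k)$ minus that pair.

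The arithmetic to check is the overlap condition in Theorem \ref{thm_pcs_gluing}(a): with $d = k+1$, $m = |X_{V_1} \cap X_{V_2}|$ equal to the number of already-symmetric pairs common to both pieces, times two, one needs $|V_1 \cap V_2| \ge d + m - 1 - \min\{m/2, d-t\} - \min\{m/2 - 1, t\}$. Since $V_1 \cap V_2$ here is a large set of roughly $2k-2$ vertices and $d = k+1$, while $m$ is controlled, this inequality should hold with room to spare; the worst case is when $t = 0$ or $t = d-1$ (point inversion or reflection) where one of the $\min$ terms degenerates, and I expect a short case check suffices. The main obstacle --- and the reason the hypothesis $|X| \le 2k$ (equivalently $A \ne \{1,\dots,k+1\}$) is essential --- is the excluded case $k=2$, $\Gamma$ a rotation: there Lemma \ref{lem:cross} genuinely fails. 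But precisely because $|X| \le 2k = 4 < 6 = |V(\cB_2)|$, we never need to reach the full antipodal pairing; we can always glue along a piece that is a $\Gamma$-rigid \emph{triangle-based} subframework (a crosspolytope graph on $4$ vertices is $K_4$ minus a perfect matching, i.e.\ a $4$-cycle, which is \emph{not} rigid in $\R^3$, so for $k=2$ I instead glue two copies of $K_5$-like rigid pieces or argue directly that with at most two symmetric pairs a generic rotation-symmetric realisation of $G(\cB_2)$ is still infinitesimally rigid by a Maxwell count plus an explicit genericity argument). Thus the real content of the lemma is showing that the Bricard obstruction only bites when \emph{all three} antipodal pairs are symmetry-constrained, and the plan above isolates exactly that.
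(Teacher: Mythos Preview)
Your approach has a genuine gap, and it stems from missing the one-line observation that makes the paper's proof immediate. You correctly note that $\ast$ must coincide with the antipodal pairing on $X$. The point you overlook is that this makes the implication \emph{monotone downward} in $X$: any $\Gamma$-symmetric realisation of $(G(\cB_k),\ast_{\rm full})$, where $\ast_{\rm full}$ is the full antipodal pairing on all of $V(\cB_k)$, is automatically a $\Gamma$-symmetric realisation of $(G(\cB_k),\ast)$ for the smaller $X$, since the symmetry constraint is only imposed on vertices in $X\subseteq V(\cB_k)$. Hence, whenever Lemma~\ref{lem:cross} supplies an infinitesimally rigid realisation for the full pairing, that \emph{same} realisation witnesses $\Gamma$-rigidity of $(G(\cB_k),\ast)$. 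No induction, no gluing. This disposes of every case except $k=2$ with $\Gamma$ a half-turn; there the paper simply uses the regular octahedron, which is infinitesimally rigid and can be positioned so that the half-turn swaps any prescribed set of at most two antipodal pairs (the pairs in $X$), leaving the remaining pair on the rotation axis.

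Your inductive gluing scheme, besides being unnecessary, does not work as written. Both of your proposed pieces $H_1 = G(\cB_k)-\{x_j,x_j^*\}$ and $H_2 = G(\cB_k)[\{x_j,x_j^*\}\cup\{x_i,x_i^*:i\in B\}]$ are isomorphic to $G(\cB_{k-1})$, which has $2k$ vertices and $2k(k-1)$ edges. A quick Maxwell count shows $G(\cB_{k-1})$ fails even ordinary rigidity in $\R^{k+1}$ for $k\le 4$ (e.g.\ for $k=3$ one needs $18$ edges but has only $12$), so neither piece can be $\Gamma$-rigid and Theorem~\ref{thm_pcs_gluing} does not apply. Your appeal to ``one of the base cases of Lemma~\ref{lem:cross}'' is also a mismatch: that lemma concerns $G(\cB_k)$ in $\R^{k+1}$, not $G(\cB_{k-1})$ in $\R^{k+1}$. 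The moral is that enlarging $X$ is the hard direction; the lemma only needs the trivial direction.
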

\begin{proof}
It will suffice to show that $(G(\cB_k),\ast)$ has an infinitesimally rigid  $\Gamma$-symmetric realisation in $\R^{k+1}$. This follows immediately from Lemma \ref{lem:cross} unless $k=2$ and $\Gamma$ is a rotation group. In the latter case, a realisation of $G(\cB_k)$  as the regular octahedron is also a $\Gamma$-symmetric realisation of $(G(\cB_k),\ast)$ since 
$|X|\leq 4$. Hence the $\Gamma$-rigidity of $(G(\cB_2),\ast)$ follows from the infinitesimal rigidity of the 1-skeleton of the regular octahedron.
\end{proof}

We next show in that Lemma \ref{lem:cross1} can be extended from $\cB_k$ to arbitrary simplicial $k$-circuits.
This result will be a key ingredient in the proof of our main theorem (Theorem \ref{thm_real_main}). 

\begin{theorem}
    \label{thm_smallX}
    Let $G=(V,E)$ be the graph of a simplicial $k$-circuit $\cS$ for some $k\geq 2$, $X\subseteq V$, $*:X\to X$ be a non-adjacent vertex pairing of $G$, and $\Gamma$ be a point group of $\R^{k+1}$ of order two. Suppose that $|X|\leq 2k$. 
Then $(G,\ast)$ is $\Gamma$-rigid in $\mathbb R^{k+1}$.
\end{theorem}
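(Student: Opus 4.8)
The plan is to prove this by induction on the number of $k$-faces of $\cS$, using Fogelsanger decomposition to reduce to the crosspolytope base case supplied by Lemma~\ref{lem:cross1}, exactly paralleling the structure of Fogelsanger's original argument but carrying along the non-adjacent vertex pairing. The base case is when $\cS$ is a trivial simplicial $k$-circuit (two copies of a single $k$-face $K$); then $G = G(\cS)$ is the complete graph on $k+1 \leq 2k$ vertices, which is generically rigid in $\R^{k+1}$, and since $|X| \le 2k$ but $|V| = k+1$ we in fact have room to place the points generically subject only to the pairing constraint, so $(G,*)$ is $\Gamma$-rigid. (Alternatively one folds the crosspolytope case of Lemma~\ref{lem:cross1} in as the genuinely nontrivial base.) For the inductive step, assume $\cS$ is a nontrivial simplicial $k$-circuit with $|V| \ge k+2$ and that the result holds for all simplicial $k$-circuits with fewer $k$-faces.

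Next I would choose an edge $uv \in E(\cS)$ and form a Fogelsanger decomposition $(\cS_1^+, \dots, \cS_m^+)$ of $\cS$ at $uv$, invoking Lemma~\ref{lem_fog_props}. The key point is to choose $uv$ carefully with respect to $X$: since $|X| \le 2k < 2(k+1) \le |V|$ when $|V|$ is large, and $*$ is a non-adjacent vertex pairing, I can pick $u,v \notin X$ (there are at least $|V| - |X| \ge 2$ vertices outside $X$, and since $\cS$ is strongly connected I can find such $u,v$ spanning an edge — this needs a small argument, possibly requiring that $\cS$ restricted to $V \setminus X$ is connected enough, or one uses that the graph of a simplicial $k$-circuit with $k \ge 2$ has high connectivity). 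With $u,v \notin X$, the contraction $\cS/uv$ still carries $*$ as a non-adjacent vertex pairing, and each $\cS_i^+/uv$ is a simplicial $k$-circuit with strictly fewer $k$-faces, still with $|X| \le 2k$, so by induction $(G(\cS_i^+/uv), *)$ is $\Gamma$-rigid. Then Lemma~\ref{lem_whiteley_pcs_contraction2} applies — provided each $\cS_i^+$ contains a suitable set $C \subset N(u) \cap N(v)$ with $|C| = k$ (note $d = k+1$ here) and $|X_C| \le 2$ — to lift $\Gamma$-rigidity from $\cS_i^+/uv$ back to $\cS_i^+$, giving that each $(G(\cS_i^+),*)$ is $\Gamma$-rigid. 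The hypothesis $|X| \le 2k$ is exactly what guarantees we can keep $|X_C| \le 2$ in the relevant subgraphs.

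Finally I would glue the pieces $(G(\cS_i^+), *)$ back together using the Gluing Theorem (Theorem~\ref{thm_pcs_gluing}) and property~(\ref{en1:f}) of Lemma~\ref{lem_fog_props}: process the circuits in an order so that at each stage the union of those already glued shares with the next one a common clique $K$ of size $k+1$, all of whose vertices either avoid $X$ or at least keep the overlap's $X$-part small, so that the dimension bound in Theorem~\ref{thm_pcs_gluing}(a) or (b) is met. Since each $\cS_i^+$ contains the edge $uv$ and $\bigcup E(\cS_i^+) = E(\cS)$, iterating the gluing produces a $\Gamma$-rigid realisation of $G(\cS_1^+ \cup \dots \cup \cS_m^+)$, which has the same vertex set and edge set as $G$ (using that $\cS = \triangle \cS_i^+$ and that the extra cliques in $\cS_i^+ \setminus \cS$ only contribute edges already in $E(\cS)$), hence $(G,*)$ is $\Gamma$-rigid. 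The main obstacle I anticipate is twofold: first, verifying that one can always select the contraction edge $uv$ (and, inductively, maintain) the condition $u,v \notin X$ together with the existence of the sets $C$ (and $D$, if a symmetric pair splitting is ever forced) with $|X_C| \le 2$ — this bookkeeping about where $X$ sits relative to the decomposition is the crux and is precisely why the clean bound $|X| \le 2k$ appears; and second, handling the gluing order so that the connectivity/affine-span hypotheses of Theorem~\ref{thm_pcs_gluing} are satisfied at every step despite the symmetry constraints, which may require a careful induction on $m$ analogous to Fogelsanger's but tracking $X_{V_1 \cap V_2}$.
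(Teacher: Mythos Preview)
Your approach mirrors the paper's strategy closely --- induction via Fogelsanger decomposition, vertex splitting, and gluing --- but there is a genuine gap at precisely the point you flag as ``the main obstacle.'' You assume you can find an edge $uv$ with $u,v \notin X$, suggesting this follows from $|V \setminus X| \geq 2$ and connectivity. It does not: take $\cS = \cB_k$ with $X$ consisting of $k$ of the $k+1$ antipodal pairs. The two vertices outside $X$ are themselves antipodal, hence non-adjacent, so \emph{every} edge of $G(\cB_k)$ is incident to $X$. No amount of connectivity of $G(\cS)$ rescues this. The paper resolves it by a dichotomy: either an edge $uv$ with $u,v \notin X$ exists, in which case the Fogelsanger argument runs essentially as you outline, or every edge of $G$ meets $X$. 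In the latter case a short combinatorial argument forces $|X| = 2k$ and $|U \cap X| = k$ for every facet $U$, and then shows that $\cS$ contains --- hence, being a circuit, equals --- a copy of $\cB_k$, whereupon Lemma~\ref{lem:cross1} finishes. This structural reduction to the crosspolytope is the missing idea; it is not a base case reached by shrinking $|V|$, but the terminal case of the dichotomy when no suitable contraction edge exists.

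Two smaller remarks. First, your worry about tracking $X_{V_1 \cap V_2}$ in the gluing step is unnecessary: since $*$ is non-adjacent, every clique $K$ of $G$ satisfies $X_K = \emptyset$, so the gluing along the $(k+1)$-cliques supplied by Lemma~\ref{lem_fog_props}(\ref{en1:f}) always falls under case~(b) of Theorem~\ref{thm_pcs_gluing} with no bookkeeping required. Second, the paper inducts on $|V|$ rather than on the number of facets; when all Fogelsanger pieces satisfy $|V_i| < |V|$ it applies the inductive hypothesis directly to each $\cS_i^+$ (no contraction needed), and only when some $V_i = V$ does it contract $\cS_i^+/uv$ and then vertex-split back. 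Your variant of always contracting first also works, but is slightly less direct.
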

\begin{proof}
    Suppose, for a contradiction, that $\cS$ is a counterexample with as few vertices as possible. Clearly, $\cS$ cannot be a trivial simplicial 
    $k$-circuit. 
Let 
$(G,\ast,p)$ be a generic $\Gamma$-framework in $\mathbb R^{k+1}$. 
First we record a useful observation. 
Suppose that 
$K \subset V$ is a clique in $G$. Then
    since $xx^* \not\in E$ for all $x \in X$ (as $\ast$ is non-adjacent), it follows that 
    \begin{equation}
        \text{$X_K = \emptyset$ and $p(K)$ is a generic set of points in $\mathbb R^{k+1}$.}
        \label{eqn_clique1}
    \end{equation}

    \begin{claim}
        Every edge of $G$ is incident to $X$.
        \label{clm_edgesX1}
    \end{claim}

    \begin{proof}[Proof of claim.]
        Suppose, for a contradiction, that $uv \in E$ and $u,v \not\in X$. Let $(\cS_1^+, \dots,\cS_m^+)$
        be a Fogelsanger decomposition of $\cS$ with respect to $uv$. Put $(V_i, E_i) = G_i = G(\cS_i^+)$ 
        and $X_i = 
        X_{V_i} = (V_i \cap X) \cap (V_i \cap X)^*$. Then $|X_i| \leq |X|$ for each $i$. 
        Also $\ast|_{X_i}$ is a non-adjacent vertex pairing for $G_i$ by Lemma~\ref{lem_fog_props}(e).

Suppose $|V_i| < |V|$ for all $i = 1,\dots,m$.
        Then, by the minimality of  $|V|$, $(G_i,\ast|_{X_i})$ is $\Gamma$-rigid in $\mathbb R^{k+1}$.
It follows that 
        $(G_i,p|_{V_i})$ is infinitesimally rigid. 
        Now, a straightforward induction argument using Lemma \ref{lem_fog_props}(\ref{en1:f}), Theorem \ref{thm_pcs_gluing}(b) and
        (\ref{eqn_clique1}) 
proves that $(G,p)$ is infinitesimally rigid, 
        contradicting our choice of $\cS$.

        Thus we can assume without loss of generality that $V_1 = V$. Since $\cS_1^+/uv$ is a simplicial $k$-circuit it follows 
        from the minimality of  $|V|$ that $(G_1/uv,\ast)$ 
        is $\Gamma$-rigid.
        By Lemma~\ref{lem_fog_props}(b), we can choose $U \in \cS_1^+$ such that $u,v \in U$.
        Since $\cS_1^+$ is a nontrivial simplicial circuit, 
        there is some $w \not\in U$ such that 
        $w \in N_{G_1}(u) \cap N_{G_1}(v)$. Now $C = U -\{u,v\}+w$ satisfies, $|C| = k$,
        and $|X_C| \leq 2$. Lemma \ref{lem_whiteley_pcs_contraction2} now implies that 
        $(G_1,\ast)$ is $\Gamma$-rigid in $\mathbb R^{k+1}$ and, since $G_1$ is a spanning 
        subgraph of $G$, that $(G,\ast)$ is also 
        $\Gamma$-rigid in $\mathbb R^{k+1}$. This contradicts our choice of $\cS$.
    \end{proof}

For each $U\in \cS$, Claim \ref{clm_edgesX1} gives $|U\cap X|\geq k$. Since $|X|\leq 2k$ and $xx^*\not\in E(\cS)$ for all $x\in X$ we have 
\begin{equation}\label{eqn_2k}
\mbox{$|X|=2k$ and $|U\cap X|=k$ for all $U\in \cS$.}
\end{equation}

\begin{claim}\label{clm:cross} Some subcomplex of $\cS$ is isomorphic to $\cB_k$.
\end{claim}
\begin{proof} 
Choose $U\in \cS$. Since $xx^*\not\in E(\cS)$ for all $x\in X$, (\ref{eqn_2k}) implies that we can label the elements of $X$ as $x_1,x_1^*,x_2,x_2^*,\ldots, x_k,x_k^*$ with $U\cap X=\{x_1,x_2,\ldots,x_k\}$. Let $U=\{x_1,x_2,\ldots,x_k,u\}$. Since $\cS$ is a nontrivial simplicial $k$-circuit, (\ref{eqn_2k}) also implies that $U-x_i+x_i^*$ is a $k$-face of $\cS$ for all $1\leq i\leq k$ and hence that $J+u$ is a $k$-face of $\cS$ for all transversals $J$ of $\{\{x_1,x_1^*\},\{x_2,x_2^*\},\ldots, \{x_k,x_k^*\}\}$. 

These cannot be all of the $k$-faces of $\cS$ since they do not form a simplicial 
$k$-circuit. Therefore there is some $u' \in V(\cS)\setminus X$ such that 
$u' \neq u$ and the same argument as before shows that 
$J+u'$ is a $k$-face of $\cS$ for all transversals $J$ of $\{\{x_1,x_1^*\},\{x_2,x_2^*\},\ldots, \{x_k,x_k^*\}\}$.
Hence every transversal of $\{\{x_1,x_1^*\},\{x_2,x_2^*\},\ldots, \{x_k,x_k^*\},\{u,u'\}\}$ is a $k$-face of $\cS$. These transversals induce a subcomplex of $\cS$ which is isomorphic to  $\cB_k$.
\end{proof}  

Since both $\cS$ and $\cB_k$ are simplicial $k$-circuits, Claim \ref{clm:cross} implies that $\cS\cong \cB_k$. We can now use Lemma \ref{lem:cross1} to deduce that $(G,\ast)$ is $\Gamma$-rigid.
\end{proof}

It is worth noting that the case $X = \emptyset$ in Theorem \ref{thm_smallX} is Fogelsanger's theorem for simplicial $k$-circuits, so we can view Theorem \ref{thm_smallX} as a non-generic
extension of this fundamental result in rigidity theory.

The Bricard octahedron shows that the hypothesis $|X| \leq 2k$ in  Theorem \ref{thm_smallX} is necessary when $k=2$ and $\Gamma$ is a half turn rotation group.

\section{$\Z_2$-Symmetric Simplicial Complexes}
\label{sec_cs_comp}

Suppose that $\cS$ is a simplicial $k$-multicomplex. Let $*:V(\cS)
\rightarrow V(\cS)$ be an involution. For $X\subseteq V(\cS)$ and 
$\cU \subset \cS$ let $X^*=\{x^*:x\in X\}$ and $\cU^* = \{K^*: K \in \cU\}$. The set $X$, respectively  $\cU$, is $*$-invariant if $X^*=X$, respectively  $\cU^*=\cU$. We say that $*$ is a {\em simplicial involution} if, for every facet $F$ of $\cS$,  $F^*$ is a facet of $\cS$ of the same multiplicity as $F$, and that  $*$ is {\em free} if $V(F^*) \neq V(F)$ for every face $F$ of $\cS$. A {\em $\mathbb{Z}_2$-symmetric simplicial $k$-multicomplex} is a pair $(\cS,\ast)$ where $\cS$ is a simplicial $k$-multicomplex and $*$ is a free simplicial involution on $\cS$. This terminology is consistent with our terminology for graphs since, if $(\cS,*)$ is a $\mathbb{Z}_2$-symmetric $k$-multicomplex, then  $(G(\cS),\ast)$ is a $\mathbb{Z}_2$-symmetric graph.
We will often abuse notation by omitting explicit mention of the involution $\ast$ when it is obvious from the context.

\subsection{$\mathbb{Z}_2$-irreducible cycles and their structural properties}
We will refer to a $\mathbb{Z}_2$-symmetric $k$-multicomplex  which is also a simplicial $k$-cycle as a {\em $\mathbb{Z}_2$-symmetric $k$-cycle}.  A {\em \csir{$k$}} is a  
non-empty $\mathbb{Z}_2$-symmetric $k$-cycle $(\cS,*)$ which is minimal in the sense that, for all   $\emptyset\neq \cT \subsetneq \cS$, $(\cT,*|_{V(\cT)})$  is not a $\mathbb{Z}_2$-symmetric $k$-cycle. We say that $\cS$ is a {\em trivial \csir{$k$}} if it consists of two vertex disjoint copies of the trivial simplicial $k$-circuit and the involution interchanges the vertex sets of these two trivial simplicial $k$-circuits. Otherwise $\cS$ is a {\em nontrivial \csir{$k$}}.

Suppose that $\cS$ is a non-empty $\mathbb{Z}_2$-symmetric $k$-cycle. 
It is straightforward to show that if $\cT\subseteq \cS$ is a $\mathbb{Z}_2$-symmetric $k$-cycle then $\cS\sm \cT$ 
is a $\mathbb{Z}_2$-symmetric $k$-cycle.  This implies that there exists a partition $\cS = \cS_1 \sqcup \dots \sqcup \cS_m$ where $\cS_i$ is a \csir{$k$} for $i = 1,\dots,m$. This partition is not necessarily unique (even up to permutations).

Clearly, if $\cS$ is a simplicial $k$-circuit with a free simplicial involution then it is also a \csir{$k$}. However there are 
\csir{$k$}s that are not simplicial $k$-circuits - the trivial \csir{$k$} is an example. See also Figure~\ref{fig_cs_circuits} for a nontrivial example. Our first result characterises \csir{$k$}s which are not simplicial $k$-circuits.

\begin{lemma}
    Let $(\cS,*)$ be a nontrivial \csir{$k$}. Then either $\cS$ is a simplicial $k$-circuit or 
    $\cS = \cT \sqcup \cT^*$ where $\cT$ is a nontrivial simplicial $k$-circuit. 
    Moreover, if the second alternative holds and $\cU$ 
    is a simplicial $k$-circuit properly contained in $\cS$, then $\cU = \cT$ or $\cU = \cT^*$.
    \label{lem_cs_circuits}
\end{lemma}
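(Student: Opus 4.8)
The plan is to analyse a minimal counterexample, exploiting the matroidal structure of simplicial $k$-cycles together with the freeness of the involution. Suppose $(\cS,*)$ is a nontrivial $\mathbb{Z}_2$-irreducible $k$-cycle that is \emph{not} a simplicial $k$-circuit. Since $\cS$ is a nonempty simplicial $k$-cycle, it decomposes (over $\mathbb{Z}_2$) as a disjoint union of simplicial $k$-circuits $\cS = \cR_1 \sqcup \dots \sqcup \cR_\ell$ with $\ell \geq 2$ (if $\ell = 1$ then $\cS$ itself would be a simplicial $k$-circuit). The first step is to observe that $*$ permutes a suitable circuit decomposition: given any circuit $\cR_i$ in such a decomposition, $\cR_i^*$ is again a simplicial $k$-circuit contained in $\cS$, and the key point is that no simplicial $k$-circuit can be a proper subset of another (circuits are minimal cycles), so either $\cR_i^* = \cR_i$ or $\cR_i^*$ is disjoint from $\cR_i$ (by minimality of circuits, two distinct circuits inside a common cycle need not be disjoint in general, so here I would instead argue via symmetric difference — more on this below).

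The cleanest route, I expect, is the following. Pick any simplicial $k$-circuit $\cT$ that is properly contained in $\cS$ (such a $\cT$ exists precisely because $\cS$ is not itself a circuit — take a minimal nonempty subcycle). Then $\cT^*$ is also a simplicial $k$-circuit properly contained in $\cS$. I claim $\cT \cap \cT^* = \emptyset$: if not, since $*$ is a free involution, $\cT \triangle \cT^*$ is a $*$-invariant simplicial $k$-cycle (using $\partial(\cT \triangle \cT^*) = \partial\cT \triangle \partial\cT^* = \emptyset$ and invariance under $*$), and it is properly contained in $\cS$ unless $\cT \cup \cT^* = \cS$ and $\cT \cap \cT^* \ne \emptyset$. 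But $\mathbb{Z}_2$-irreducibility forbids a proper nonempty $*$-invariant subcycle, so $\cT \triangle \cT^* = \emptyset$, i.e. $\cT = \cT^*$ — but then $\cT$ alone is a proper $*$-invariant subcycle of $\cS$, contradicting irreducibility (here I must also rule out $\cT$ being trivial, which follows since $\cS$ is a nontrivial $\mathbb{Z}_2$-irreducible $k$-cycle and $\cT=\cT^*$ would force $\cS=\cT$). Hence $\cT \cap \cT^* = \emptyset$, so $\cT \sqcup \cT^*$ is a $*$-invariant simplicial $k$-cycle inside $\cS$, and by irreducibility $\cS = \cT \sqcup \cT^*$. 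Since $\cS$ is nontrivial, $\cT$ is a nontrivial simplicial $k$-circuit.

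For the "moreover" clause: suppose $\cU$ is any simplicial $k$-circuit with $\cU \subsetneq \cS = \cT \sqcup \cT^*$. Consider $\cU \cap \cT$ and $\cU \cap \cT^* = \cU \setminus \cT$. Both $\partial(\cU \cap \cT)$ and $\partial(\cU \setminus \cT)$ can be controlled: since $\partial \cU = \emptyset$ and $\cU$ is the disjoint union $(\cU \cap \cT) \sqcup (\cU \cap \cT^*)$, and since any $k$-face lies in $\cU \cap \cT$ or in $\cU \cap \cT^*$ but the two parts live on disjoint facet-sets... the point is that $\cU \cap \cT \subseteq \cT$ with $\cT$ a circuit forces $\cU \cap \cT$ to be either $\emptyset$ or all of $\cT$ provided $\cU \cap \cT$ is itself a cycle. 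To see it is a cycle: a $(k-1)$-face $F$ of $\cU \cap \cT$ is contained in some facets of $\cU \cap \cT$; these are facets of $\cU$ and simultaneously facets of $\cT$, and because $\cT, \cT^*$ have disjoint facet sets, the facets of $\cU$ containing $F$ that lie in $\cT^*$ contribute nothing to a face that already sits inside $\cT$ — so the parity count of $F$ in $\cU\cap\cT$ matches that in $\cU$, which is even; hence $\partial(\cU\cap\cT)=\emptyset$. By minimality of the circuit $\cT$, $\cU \cap \cT \in \{\emptyset, \cT\}$ and likewise $\cU \cap \cT^* \in \{\emptyset, \cT^*\}$. Since $\cU$ is nonempty and $\cU \subsetneq \cS$, exactly one of these is nonempty, giving $\cU = \cT$ or $\cU = \cT^*$.

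The main obstacle I anticipate is the bookkeeping in the last paragraph: making precise that $\partial(\cU \cap \cT) = \emptyset$ using that $\cT$ and $\cT^*$ are facet-disjoint (which follows from freeness of $*$, since $F^* = F$ is impossible, so no facet lies in both $\cT$ and $\cT^*$), and handling multiplicities correctly in the multicomplex setting. Everything else is a short symmetric-difference argument leveraging $\partial(\cS \triangle \cT) = \partial\cS \triangle \partial\cT$ and the definition of $\mathbb{Z}_2$-irreducibility as minimality among $*$-invariant cycles.
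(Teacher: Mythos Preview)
Your argument for the first alternative is essentially the paper's: pick a circuit $\cT\subsetneq\cS$, note that $\cT\triangle\cT^*$ is a nonempty $*$-invariant $k$-cycle, and invoke irreducibility to get $\cS=\cT\sqcup\cT^*$. One small point you glossed over: the implication ``$\cS$ nontrivial $\Rightarrow$ $\cT$ nontrivial'' needs the freeness of $*$. If $\cT$ were trivial, then $V(\cT)$ is a single $(k+1)$-set, and $K:=V(\cT)\cap V(\cT^*)$ satisfies $K^*=K$; freeness forces $K=\emptyset$, so $\cS$ would be a trivial \csir{$k$}, a contradiction. The paper spells this out.

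The genuine gap is in your ``moreover'' argument. Your claim that $\partial(\cU\cap\cT)=\emptyset$ rests on the assertion that a $(k-1)$-face $F$ of a facet in $\cU\cap\cT$ cannot be contained in any facet of $\cU\cap\cT^*$. But facet-disjointness of $\cT$ and $\cT^*$ does not prevent them from sharing $(k-1)$-faces: the facets of $\cU$ containing $F$ may split nontrivially between $\cT$ and $\cT^*$, so the parity count of $F$ in $\cU\cap\cT$ need not match that in $\cU$. Without $*$-irreducibility your conclusion can fail (in an arbitrary disjoint union of two circuits there can be further circuits), so any correct argument here must invoke irreducibility, which yours does not. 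The paper's route is short and avoids this issue entirely: apply the first part to $\cU$ to obtain $\cS=\cU\sqcup\cU^*$, then compute
\[
\cU\triangle\cT=(\cU\setminus\cT)\cup(\cT\setminus\cU)=(\cU\cap\cT^*)\cup(\cU^*\cap\cT),
\]
which is visibly $*$-invariant and a $k$-cycle. Irreducibility then gives $\cU\triangle\cT\in\{\emptyset,\cS\}$, i.e.\ $\cU\in\{\cT,\cT^*\}$. Your intersection approach can be salvaged along similar lines (once you know $\cS=\cU\sqcup\cU^*$, one checks that $(\cU\cap\cT)\sqcup(\cU\cap\cT)^*$ is a $*$-invariant cycle), but not via the boundary-counting argument you proposed.
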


\begin{proof}
    Suppose that $\cS$ is not a simplicial $k$-circuit. Then some  simplicial $k$-circuit $\cT$ is properly contained in $\cS$. 
    Since $\cS$ is a \csir{$k$}, $\cT^* \neq \cT$. 
    It follows that $\cT \triangle \cT^*$ is a non-empty $*$-invariant 
    simplicial $k$-cycle contained in $\cS$, so $\cT \triangle \cT^* = \cS$, whence $\cS 
    = \cT \sqcup \cT^*$. If $\cT$ is a trivial simplicial $k$-circuit then 
    $K = V(\cT) \cap V(\cT^*)$ is a face of $\cS$ and $K^* = K$. 
    Since $*$ is a free involution it follows that $K = \emptyset$
    and so $\cS$ is a trivial \csir{$k$}, contradicting the hypothesis. Hence $\cT$ is nontrivial.

    Now suppose that the second alternative in the lemma holds and $\cU$ 
    is a simplicial $k$-circuit properly contained in $\cS$.  Then $\cS = \cU \sqcup \cU^*$ by the same argument as the first paragraph and
    $\cU \triangle \cT = (\cU^* \cap \cT) \cup (\cU \cap \cT^*)$ is a 
    $*$-invariant simplicial $k$-cycle contained in $\cS$. Therefore,
    either $\cU \triangle \cT = \emptyset$ and so $\cU = \cT$ or, 
    $\cU\triangle \cT = \cS$ and so $\cU = \cS \setminus \cT = \cT^*$.
\end{proof}

An example illustrating the second alternative in Lemma \ref{lem_cs_circuits} is given in Figure \ref{fig_cs_circuits}.

\begin{figure}[ht]
\begin{center}
\includegraphics[scale=0.8]{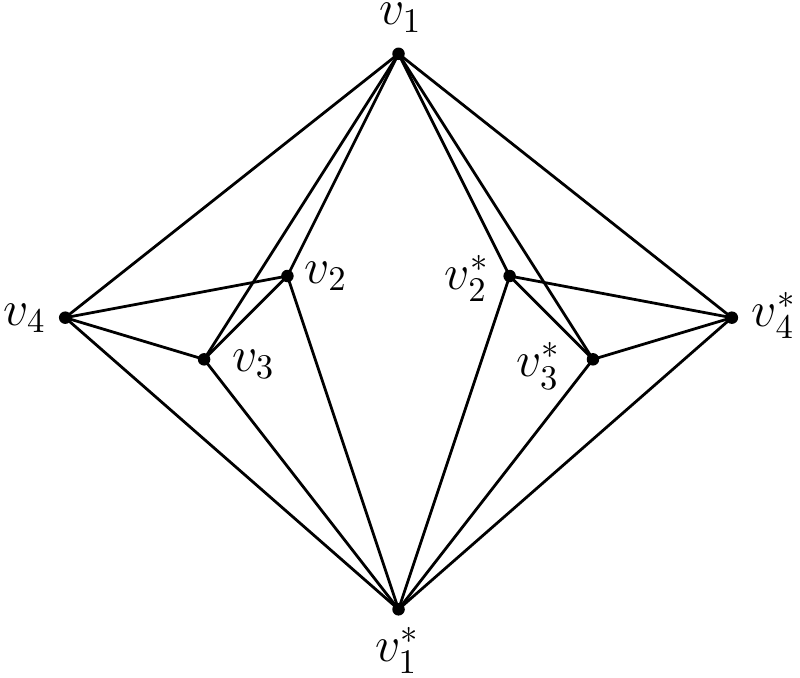}
\end{center}
\caption{The 1-skeleton of a \csir{$2$} $\cS$, which is not a simplicial 2-circuit. We have $\cS=\cT\sqcup \cT^*$ where 
$\cT=\{\{v_1,v_2,v_3\},\{v_1,v_2,v_4\},\{v_1,v_3,v_4\},\{v_1^*,v_2,v_3\},\{v_1^*,v_2,v_4\},\{v_1^*,v_3,v_4\}\}$
is the simplicial 2-circuit given by the boundary complex of the hexahedron.}
\label{fig_cs_circuits}
\end{figure}

\begin{lemma}
    Suppose that $(\cS,*)$ is a \csir{$k$} for some $k \geq 1$. Then $|V(\cS)| \geq 2k+2$ with equality if and only if 
    $(\cS,*)$ is a trivial \csir{$k$} or $\cS$ is isomorphic to $\cB_k$.
    \label{lem_small_cs_circuits}
\end{lemma}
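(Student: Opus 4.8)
The plan is to deduce the inequality from a single facet, and then, in the equality case, to show that $\cS$ must be supported on the transversals of a fixed partition of $V(\cS)$ into $*$-pairs, where the mod-$2$ cycle structure is completely rigid. The starting observation is that freeness of $*$ already forbids any face of $\cS$ from containing a $*$-pair: if $x,x^*\in F$, then the edge $\{x,x^*\}$ is a face with $\{x,x^*\}^*=\{x,x^*\}$, contradicting $V(F'^*)\neq V(F')$ for every face $F'$. Hence, for any facet $F$, we have $V(F)\cap V(F^*)=\emptyset$, and since $F^*$ is also a facet, $|V(\cS)|\geq|V(F)|+|V(F^*)|=2(k+1)=2k+2$. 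This gives the inequality and also the ``if'' direction of the equality statement: a trivial $\mathbb{Z}_2$-irreducible $k$-cycle consists of two vertex-disjoint copies of a single $k$-face, so it has $2k+2$ vertices, and $\cB_k$ visibly has $2k+2$ vertices.

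For the ``only if'' direction, assume $|V(\cS)|=2k+2$ and that $(\cS,*)$ is not trivial; the goal is $\cS\cong\cB_k$. Write $V(\cS)=\{x_1,x_1^*,\dots,x_{k+1},x_{k+1}^*\}$. By the observation above, every facet of $\cS$ is a $(k+1)$-set meeting each pair $\{x_i,x_i^*\}$ exactly once, i.e.\ a transversal of $\{\{x_1,x_1^*\},\dots,\{x_{k+1},x_{k+1}^*\}\}$; under $x_i\mapsto e_i$, $x_i^*\mapsto -e_i$ every facet of $\cS$ is therefore a facet of $\cB_k$. Let $\bar\cS$ be the set of transversals occurring with odd multiplicity in $\cS$. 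Each $(k-1)$-face of $\cS$ is a transversal with one vertex deleted, and such a set is contained in exactly two transversals (those agreeing with it on all pairs but one), so $\partial\cS=\emptyset$ forces: whenever $T\in\bar\cS$ and $j$ is an index, the transversal obtained from $T$ by switching its element in the pair $\{x_j,x_j^*\}$ also lies in $\bar\cS$. These switches generate a group acting transitively on the set of all transversals, so $\bar\cS$ is either empty or equal to the whole set of transversals.

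If $\bar\cS$ is the whole set of transversals, then every transversal occurs in $\cS$ with multiplicity at least one, so $\cB_k\subseteq\cS$; since $\cB_k$ equipped with the antipodal pairing (which is exactly our $*$ after the identification) is a nonempty $\mathbb{Z}_2$-symmetric $k$-cycle, minimality of $\cS$ forces $\cS=\cB_k$, hence $\cS\cong\cB_k$. If instead $\bar\cS=\emptyset$, all multiplicities are even and, as $\cS\neq\emptyset$, some transversal $T$ has multiplicity at least $2$; then so does $T^*$, since a facet and its $*$-image have equal multiplicity. Thus $\{T,T,T^*,T^*\}\subseteq\cS$, and this sub-multicomplex is a nonempty $\mathbb{Z}_2$-symmetric $k$-cycle — two vertex-disjoint copies of a trivial simplicial $k$-circuit interchanged by $*$ — so by minimality $\cS=\{T,T,T^*,T^*\}$, which is a trivial $\mathbb{Z}_2$-irreducible $k$-cycle, contradicting our assumption. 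This completes the plan.

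The main obstacle, such as it is, lies in the switch-invariance step: one must keep multiplicities straight and check that the $(k-1)$-sets being used are genuinely faces of $\cS$, so that $\partial\cS=\emptyset$ may legitimately be invoked. Beyond that, once $|V(\cS)|=2k+2$ the lemma reduces to the elementary fact that the only nonzero mod-$2$ simplicial $k$-cycle supported on the transversals of $k+1$ pairs of vertices is the full cross-polytope boundary $\cB_k$, and I do not anticipate serious difficulty.
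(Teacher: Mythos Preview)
Your proof is correct and follows essentially the same approach as the paper's: both use freeness of $*$ to get $U\cap U^*=\emptyset$ for the inequality, and in the equality case both observe that every facet is a transversal of the $*$-pairs and use $\partial\cS=\emptyset$ together with $\mathbb{Z}_2$-irreducibility to force $\cS$ to be either $\cB_k$ or the trivial \csir{$k$}. The only cosmetic difference is that the paper splits cases on whether some facet has multiplicity at least two, while you split on whether the odd-multiplicity support $\bar\cS$ is empty or full; these are equivalent organisations of the same argument.
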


\begin{proof}
We have   $|V(\cS)| \geq 2k+2$ since, for any $U \in \cS$ we have $U \cap U^* = \emptyset$ and hence $|V(\cS)| \geq |U \sqcup U^*|=2k+2$.
    
    Suppose that $|V(\cS)| = 2k+2$. Let $V(\cS)=\{x_i,x_i^*:1\leq i\leq k+1\}$. If $\cS$ contains two copies of some $k$-face $W$ then it follows easily that $\cS = \{\{ W , W, W^*, W^*\}\}$ and so $\cS$ is a trivial \csir{$k$}. Thus we may assume that each $W\in \cS$ has multiplicity one. Then, for each $W \in \cS$ and $w \in W$, we have $W-w+w^* \in \cS$ since $W-w$ is contained in at least two $k$-faces of $\cS$ and $*$ is free. It follows that every transversal 
    of $\{ \{x_i,x_i^*\}: 1\leq i\leq k+1\}$ is a $k$-face of $\cS$ and so $\cS \cong \cB_k$.
\end{proof}

\begin{lemma}
    Let $(\cS,*)$ be a \csir{$k$} for some $k\geq 2$ and $X\subset V(\cS)$ be an inclusion-minimal $\ast$-invariant vertex separator of $G(\cS)$. Suppose $|X|\leq 2k$. Then either $\cS=\cT\sqcup \cT^*$ for some simplicial  $k$-circuit $\cT$ with $V(T)\cap V(T^*)=X$, or $|X|=2k$ and the subgraph of $G(\cS)$ induced by $X$ is isomorphic to $G(\cB_{k-1})$.
    \label{lem_smallsep}
    \label{lem_sep_2k}
\end{lemma}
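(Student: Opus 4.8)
The plan is to analyse the structure of $G(\cS)$ relative to the separator $X$, using the fact that $\cS$ is a simplicial $k$-cycle to control which $k$-faces can appear. Write $G = G(\cS)$ and let $A, B$ be two components of $G - X$ (after grouping: $A$ a union of some components, $B$ the rest), so that $V(\cS) = A \sqcup X \sqcup B$ with no edges between $A$ and $B$. Since $\cS$ is a $k$-cycle, every $(k-1)$-face lies in an even (hence at least two) number of $k$-faces, so every $k$-face is part of a strongly connected "sheet"; because there are no edges from $A$ to $B$, no $k$-face meets both $A$ and $B$. Hence $\cS = \cS_A \sqcup \cS_X \sqcup \cS_B$ where $\cS_A$ consists of the $k$-faces meeting $A$, $\cS_B$ those meeting $B$, and $\cS_X$ those contained in $X$. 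First I would argue that $\cS_X = \emptyset$: a $k$-face $F \subseteq X$ would need each of its $(k-1)$-subfaces covered an even number of times, but those subfaces can only be covered by further faces inside $X$ (since faces meeting $A$ or $B$ use a vertex outside $X$), forcing a nonempty $k$-cycle supported on $X$; since $|X| \le 2k < 2k+2$, Lemma~\ref{lem_small_cs_circuits} rules this out once we also handle $\ast$-invariance, so $\cS_X=\emptyset$ and $\cS = \cS_A \sqcup \cS_B$.

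Next I would bring in the $\ast$-invariance and minimality of $X$. Because $X$ is $\ast$-invariant, $\ast$ permutes the components of $G - X$; either it fixes the $A/B$ split (so $\cS_A, \cS_B$ are each $\ast$-invariant) or it swaps two components. In the swap case, take $\cT = \cS_A$, so $\cT^* = \cS_B$ and $\cS = \cT \sqcup \cT^*$ with $\cT, \cT^*$ vertex-disjoint off $X$; one checks $\cT$ is a simplicial $k$-cycle (its boundary, supported in $X$, must cancel against $\partial \cT^*$, but those live on disjoint vertex sets off $X$, forcing each to be empty since a $(k-1)$-face in $\partial\cT$ would need a partner), hence contains a simplicial $k$-circuit, and minimality of $\cS$ as a $\mathbb{Z}_2$-symmetric cycle forces $\cT$ itself to be a simplicial $k$-circuit (any proper subcircuit $\cU \subsetneq \cT$ would give $\cU \sqcup \cU^* \subsetneq \cS$ a $\mathbb{Z}_2$-symmetric cycle). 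One also gets $V(\cT) \cap V(\cT^*) \subseteq X$, and the reverse inclusion because each $x \in X$ has neighbours in both $A$ and $B$ (else $X - x$ or $X - \{x,x^*\}$ would still separate, contradicting inclusion-minimality), so $x$ lies in a face of $\cS_A$ and a face of $\cS_B$. This yields the first alternative.

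It remains to handle the case where $\ast$ does not swap the two sides, i.e. $\cS_A$ and $\cS_B$ are each $\ast$-invariant nonempty $\mathbb{Z}_2$-symmetric $k$-cycles — but this contradicts the irreducibility of $\cS$ unless one of them is empty, and both are nonempty since $A, B \neq \emptyset$. So this case cannot occur, and we are always in the "swap" situation *unless* the separator $X$ does not split into two $\ast$-invariant halves in a way compatible with components — and here is where the second alternative enters: if $\cT$ as produced above turns out to be a *trivial* simplicial $k$-circuit, then $V(\cT) \sqcup V(\cT^*)$ has only $2k+2$ vertices and $X = V(\cT)\cap V(\cT^*) = \emptyset$ by freeness, contradicting that $X$ is a separator; more generally, when the clean decomposition fails we must have $|X| = 2k$ and the induced graph $G[X]$ forced to be $G(\cB_{k-1})$. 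To see this, when $\cT$ exists the link structure forces $X$ to carry, for each $U\in\cS_A$ with $|U\cap X|=k$, all "flips" $U - u + u^*$ for $u \in U \cap X$; chasing these as in the proof of Claim~\ref{clm:cross} in Theorem~\ref{thm_smallX} shows every transversal of $k$ antipodal pairs in $X$ spans a $(k-1)$-face pattern, i.e. $G[X] \cong G(\cB_{k-1})$, and a vertex count gives $|X| = 2k$.

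The main obstacle I anticipate is the bookkeeping in the "bad" case: showing rigorously that whenever the split $\cS = \cT \sqcup \cT^*$ with $V(\cT)\cap V(\cT^*) = X$ fails, one is forced into $|X| = 2k$ with $G[X] \cong G(\cB_{k-1})$, rather than some intermediate configuration. The delicate point is that a single separator $X$ might be realised by several different component-groupings of $G - X$, and $\ast$ need not respect all of them; one must use inclusion-minimality of $X$ together with the $k$-cycle condition (every $(k-1)$-face in an even number of $k$-faces) to pin down that the only obstruction to the clean decomposition is the crosspolytope-type degeneracy on $X$ itself. I expect this to require a careful local analysis of the faces through vertices of $X$, parallel to the transversal argument used for $\cB_k$ in Theorem~\ref{thm_smallX}.
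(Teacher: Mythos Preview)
Your overall setup (split $\cS=\cS_A\sqcup\cS_B$ along the separator, then bring in $\ast$) matches the paper's, but the argument breaks at the crucial step. In the ``swap'' case you assert that $\cT=\cS_A$ is a simplicial $k$-cycle because ``its boundary, supported in $X$, must cancel against $\partial\cT^*$, but those live on disjoint vertex sets off $X$, forcing each to be empty''. This is false: both $\partial\cT$ and $\partial\cT^*$ are supported \emph{inside} $X$, and from $\partial\cS=\emptyset$ you only get $\partial\cT=\partial\cT^*$ as $(k-1)$-complexes on $X$, not that they vanish. Indeed the second alternative of the lemma arises exactly when $\cU:=\partial\cS_A=\partial\cS_B$ is nonempty, so your dichotomy ``swap $\Rightarrow$ first alternative, fix $\Rightarrow$ contradiction'' is not how the two outcomes separate. (A minor additional gap: your argument that $\cS_X=\emptyset$ is also incomplete, since a $(k-1)$-subface of a face in $X$ can certainly lie in faces of $\cS_A$ or $\cS_B$; the paper instead uses the one-line pigeonhole observation that a $(k+1)$-subset of the $\ast$-invariant set $X$ of size $\le 2k$ must contain an antipodal pair $\{x,x^*\}$, contradicting freeness.)

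The key object you are missing is precisely this common boundary $\cU=\partial\cS_A=\partial\cS_B$, which is a simplicial $(k-1)$-cycle with $V(\cU)\subseteq X$. The paper's argument pivots on $\cU$: if $\cU=\emptyset$ then each $\cS_A,\cS_B$ is a genuine $k$-cycle and Lemma~\ref{lem_cs_circuits} forces $\cS_B=\cS_A^*$ (the first alternative, with minimality of $X$ giving $V(\cT)\cap V(\cT^*)=X$). If $\cU\neq\emptyset$, one studies $\cU$ and $\cU^*$ as $(k-1)$-cycles on the small set $X$; applying Lemma~\ref{lem_small_cs_circuits} in dimension $k-1$ to $\cU\triangle\cU^*$ and then to $\cU$ itself forces $|X|=2k$ and $\cU\cong\cB_{k-1}$, whence $G[X]\cong G(\cB_{k-1})$. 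Your final paragraph gestures toward a transversal argument on $X$, but the actual mechanism is this drop in dimension together with the vertex lower bound of Lemma~\ref{lem_small_cs_circuits}, not a link/flip analysis in the style of Claim~\ref{clm:cross}. Once you introduce $\cU$ and split on whether it is empty, the proof becomes short; without it, the case analysis you propose cannot be completed.
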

\begin{proof}
    Choose $V_1, V_2 \subset V(\cS)$ such that $V_1 \cap V_2 = X$, $G(\cS) = 
    G(\cS)[V_1] \cup G(\cS)[V_2]$. Let $\cT_i 
    = \{W \in \cS: W \subset V_i\}$. Since $X$ is a vertex separator
    of $G(\cS)$ it follows that $\cS = \cT_1 \cup \cT_2$ and that $\cT_1, \cT_2 \neq \cS$. 
    The hypothesis that $X$ is $*$-invariant and the fact that $|X| \leq 2k$
imply that, for any $W\in \cS$, $W\not \subseteq X$; hence $\cT_1 \cap \cT_2 = \emptyset$.
    Thus 
    $\cS = \cT_1 \sqcup \cT_2$, $\partial \cT_1 = \partial \cT_2=:\cU$ and $V(\cU) \subset X$. 

Suppose   $\cU = \emptyset$.  Then $\cT_i$ is a simplicial $k$-cycle for $i=1,2$. Lemma \ref{lem_cs_circuits} now implies that  $\cT_1$ is a simplicial $k$-circuit, $\cT_2=\cT_1^*$  and $\cS=\cT_1\sqcup \cT_1^*$. The minimality of $X$ now gives $V(\cT_1)\cap V(\cT_1^*)=X$. Hence we may assume that $\cU \neq \emptyset$.

The definition of the boundary operator implies that $\cU$  is a simplicial $(k-1)$-cycle and each of its facets has multiplicity one. 
Hence $\cU \triangle \cU^*$ is a $\mathbb{Z}_2$-symmetric $(k-1)$-cycle that does not contain a trivial simplicial $(k-1)$-circuit. Since $V(U)\subseteq X$ and $X$ is $*$-invariant, $\cV:=\cU \triangle \cU^*\subseteq X$. We can now apply  
Lemma \ref{lem_small_cs_circuits} and the hypothesis that $|X| = 2k$ to $\cV$ to deduce that either $\cV = \emptyset$ or $\cV \cong \cB_{k-1}$.

Suppose $\cV \cong \cB_{k-1}$. Then $|X|=2k$ and $\cV$ is the \csir{$(k-1)$}
    consisting of all transversals of the following partition of $X$:
    $\{ \{x_1,x_1^*\},\dots,\{x_k,x_k^*\}\}$. 
Hence every $k$-subset of $X$ which contains no pair $\{x_i,x_i^*\}$ is contained in $\cV$. 
Since $*$ is free and $V(\cU)\subset X$, this implies that $\cU \subset \cV$. Since $U$ is a simplicial $(k-1)$-circuit,  
    $\cV = \cU \sqcup \cU^*$ and hence $\cV$ is a trivial \csir{$(k-1)$}. This contradicts the assumption that $\cV \cong \cB_{k-1}$.

Hence
    $\cV=\cU \triangle \cU^* = \emptyset$ so $\cU=\cU^*$. Then $\cU$ is a $\mathbb{Z}_2$-symmetric $(k-1)$-cycle with at most $2k$ vertices that 
    does not contain a trivial \csir{$(k-1)$}. Lemma \ref{lem_small_cs_circuits} now implies that 
    $\cU \cong \cB_{k-1}$ and hence $G(\cS)[X]$ is isomorphic to $G(\cB_{k-1})$.
\end{proof}

\subsection{$\mathbb{Z}_2$-symmetric Fogelsanger decomposition}
We next adapt Fogelsanger's decomposition technique for simplicial $k$-circuits to the context of \csir{$k$}s. Let $(\cS,*)$ be  
a \csir{$k$} and put $G = (V,E)=G(\cS)$. Suppose that 
$xy \in E$ and $xy^* \not\in E$. Then $x^*y^* \in E$, $x^*y \not\in E$ and $(\cS/xy)/x^*y^*$ is 
a \csir{$k$} under the free simplicial involution induced by $*$ on $V-y-y^*$.
Recall that, for a face $F$ of $\cS$, the antistar of $F$ in $\cS$ is given by  $\ant(F) = \{ U \in \cS: F \not\subset U\}$. 
Observe that there is a bijection $\gamma: \ant(\{x,y\}) \cap \ant(\{x^*,y^*\}) \rightarrow
(\cS/xy)/x^*y^*$ given by $\gamma (U) = U$ if $y,y^* \not\in U$, $\gamma(U) = U -y+x$ if
$ y \in U$, and $\gamma(U)  = U -y^*+x^*$ if $ y^* \in U$.

Choose a partition $\{\cS_1',\cS_2',\ldots,\cS_m'\}$
of  $(\cS/xy)/x^*y^*$ into \csir{$k$}s. 
For $1\leq i \leq m$, let $\cS_i = \gamma^{-1}(\cS'_i)$, 
$$\cS^\dagger_i = \{K \subset V(\cS_i):\, \{x,y\} \subset K, K - x\in \partial \cS_i,\, K-y \in \partial \cS_i\}$$
and
\begin{equation}
    \label{eqn_fog_part}
    \cS_i^+ = \cS_i \cup \cS^\dagger_i \cup (\cS^\dagger_i)^*.
\end{equation}
We say that 
$(\cS_1^+, \dots, \cS_m^+)$ is a {\em $\mathbb{Z}_2$-symmetric Fogelsanger decomposition} for $\cS$ at $xy$.
Note that a $\mathbb{Z}_2$-symmetric Fogelsanger decomposition at $xy$ is defined only when $xy^*\notin E$.

The \csir{$2$} in Figure \ref{fig_cs_circuits} can be used to illustrate this construction.
We have $(\cS/v_2v_3)/v_2^*v_3^*=\cS_1'\sqcup \cS_2'$ where $\cS_1',\cS_2'$ are trivial \csir{$2$}s on $\{v_1,v_2,v_3,v_1^*,v_2^*,v_3^*\}$ and  
$\{v_1,v_2,v_4,v_1^*,v_2^*,v_4^*\}$, respectively. The above definitions now give 
$S_1=\cT_1\cup \cT_1^*$, where $\cT_1=\{\{v_1,v_2,v_3\},\{v_1,v_2,v_4\},\{v_1,v_3,v_4\}\}$, $\cS_1^\dagger=\{\{v_1,v_2,v_3\}\}$ and $\cS_1^+$ is the disjoint union of the boundary complexes of two tetrahedra on $\{v_1,v_2,v_3,v_4\}$ and $\{v_1^*,v_2^*,v_3^*,v_4^*\}$, respectively. Similarly,   $\cS_2^+$ is the disjoint union of the boundary complexes of two tetrahedra on $\{v_1^*,v_2,v_3,v_4\}$ and $\{v_1,v_2^*,v_3^*,v_4^*\}$, respectively. 

The properties of $\mathbb{Z}_2$-symmetric  Fogelsanger decompositions given in the following lemma
are analogous to those for Fogelsanger decompositions given in Lemma \ref{lem_fog_props}.

\begin{lemma}
    Let $(\cS,*)$ be a \csir{$k$} for $k \geq 2$ and $xy\in E(\cS)$  with $xy^*\not\in E(\cS)$. Suppose that  $(\cS_1^+, \dots, \cS_m^+)$ is a $\mathbb{Z}_2$-symmetric  Fogelsanger decomposition for $\cS$ at $xy$. Let $G(\cS_i^+)=(V_i,E_i)$ for $1 \leq i \leq m$. Then:
\begin{enumerate}[(a)]
    \item\label{cs-a} $xy, x^*y^* \in E_i$ for $1 \leq i \leq m$;
    \item\label{cs-b} $\bigcup_{i=1}^m E_i = E$ and $\bigcup_{i=1}^m V_i = V$;
    \item\label{cs-c}  $(\cS_i^+/xy)/x^*y^*$ is a \csir{$k$} for $1 \leq i \leq m$;
    \item\label{cs-d} $\cS_i^+$ is a nontrivial \csir{$k$} and each $K \in \cS_i^+\setminus \cS$ is a clique of $G$ that either contains $\{x,y\}$ or contains $\{x^*,y^*\}$;
    \item\label{cs-e}  if $U\in\cS$ and $U$ contains neither $\{x,y\}$ nor $\{x^*,y^*\}$ then $U$ belongs to a unique $\cS_i^+$;
    \item\label{cs-f}  $\cS = \triangle_{k=1}^m \cS_k^+$;
    \item\label{cs-g}  for all proper subsets $I$ of $\{1,\dots,m\}$, there exists $j \in \{1,\dots,m\}
            \setminus I$ such that $\cS_{j}^+ \cap \triangle_{i \in I}\cS_i^+$ is non-empty.
\end{enumerate}
\label{lem_cs_fog}
\end{lemma}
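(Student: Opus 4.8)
The plan is to follow the proof of Lemma~\ref{lem_fog_props} (a restatement of \cite[Lemma 3.9]{CJT1}) step by step, carrying the involution $*$ through the whole argument and treating the two contraction sites $\{x,y\}$ and $\{x^*,y^*\}$ symmetrically. I would begin by recording the routine structural facts. Each $\cS_i'$ is a \csir{$k$}, hence $*$-invariant, and $\gamma$ intertwines $*$ on its domain (as $\{x,y\}^*=\{x^*,y^*\}$), so each $\cS_i=\gamma^{-1}(\cS_i')$ is $*$-invariant and therefore so is $\cS_i^+$. Since $*$ has no fixed edge and $xy^*\notin E$, the pairs $\{x,x^*\}$, $\{x,y^*\}$, $\{x^*,y\}$, $\{y,y^*\}$ are all non-edges of $G(\cS)$; consequently no face of $\cS$ and no clique of $G(\cS)$ meets both $\{x,y\}$ and $\{x^*,y^*\}$, and in particular $\cS_i$, $\cS_i^\dagger$, $(\cS_i^\dagger)^*$ are pairwise disjoint. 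Finally $\cS_i\neq\emptyset$ and $\cS_i\subseteq \ant(\{x,y\})\cap\ant(\{x^*,y^*\})\subsetneq\cS$; since $\cS$ is a \csir{$k$} and $\cS_i$ is a $*$-invariant subset, $\cS_i$ cannot itself be a $k$-cycle, so $\partial\cS_i\neq\emptyset$.

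Given these facts, (c) and (e) are quick. For (c) one tracks the two contractions: the faces of $\cS_i^\dagger$ contain $\{x,y\}$ and are removed by $/xy$, the faces of $(\cS_i^\dagger)^*$ contain $\{x^*,y^*\}$ (and no such face contains $y$) and are removed by $/x^*y^*$, so $(\cS_i^+/xy)/x^*y^*=\gamma(\cS_i)=\cS_i'$, which is a \csir{$k$} by the choice of the partition. For (e), a face $U\in\cS$ containing neither $\{x,y\}$ nor $\{x^*,y^*\}$ lies in $\ant(\{x,y\})\cap\ant(\{x^*,y^*\})$, hence in a unique $\cS_i$ (the $\cS_i$ partition that set via $\gamma$), and by the disjointness recorded above it lies in no other $\cS_j^+$.

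The core of the proof is the boundary bookkeeping behind (f), and hence (d). I would first show that $\partial\cS_i$ is the disjoint union $P_{xy}\sqcup P_{x^*y^*}$, where $P_{xy}$, respectively $P_{x^*y^*}$, is the set of its faces meeting $\{x,y\}$, respectively $\{x^*,y^*\}$: disjointness is the non-edge observation above, and the fact that every face of $\partial\cS_i$ meets one of these pairs follows, just as in \cite{CJT1}, from a coface count comparing $\cS_i$ with $\cS_i'=\gamma(\cS_i)$ (away from $x,y,x^*,y^*$ the map $\gamma$ is the identity, so cofaces are preserved, and $\cS_i'$ being a cycle forces even parity). Since $\cS_i^\dagger$-faces avoid $x^*,y^*$ while $(\cS_i^\dagger)^*$-faces avoid $x,y$, adjoining $\cS_i^\dagger$ affects only $P_{xy}$ and adjoining $(\cS_i^\dagger)^*$ affects only $P_{x^*y^*}$; thus the two halves decouple and each is a verbatim instance of the one-site computation of \cite{CJT1}, which shows $\cS_i^\dagger$ annihilates $P_{xy}$ and $(\cS_i^\dagger)^*$ annihilates $P_{x^*y^*}$, so $\partial\cS_i^+=\emptyset$. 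Sorting all faces and cliques by the three classes (through $\{x,y\}$, through $\{x^*,y^*\}$, through neither) then gives $\cS=\triangle_{i=1}^m\cS_i^+$, which is (f), and hence (d). That each $\cS_i^+$ is a nontrivial \csir{$k$} follows because $\partial\cS_i\neq\emptyset$ forces $\cS_i^\dagger\neq\emptyset$ --- which also gives (a), since then $xy,x^*y^*\in E_i$ --- while $(\cS_i^+/xy)/x^*y^*=\cS_i'$ is a \csir{$k$}, so the minimality argument by which \cite{CJT1} upgrades ``cycle'' to ``circuit'' applies here with $*$-invariant sub-cycles in place of sub-cycles. Statement (b) is then immediate: every edge of $\cS$ lies in some face of $\cS$, which lies in some $\cS_i^+$ by (d), and the faces of $\cS_i^\dagger$ and $(\cS_i^\dagger)^*$ are cliques of $G(\cS)$ because $K-x$ and $K-y$ lie in $\partial\cS_i$ and are therefore faces of $\cS$.

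Finally, for (g) I would argue as in the non-symmetric case: if $\cS_j^+\cap\triangle_{i\in I}\cS_i^+=\emptyset$ for every $j\notin I$, then $\triangle_{i\in I}\cS_i^+$ and $\triangle_{j\notin I}\cS_j^+$ are disjoint, so (using $\partial\cS_i^+=\emptyset$) $\triangle_{i\in I}\cS_i^+$ is a $*$-invariant sub-cycle of $\cS=\triangle_{i=1}^m\cS_i^+$, whence minimality of $\cS$ forces it to be $\emptyset$ or $\cS$; but each nonempty $\cS_j$ contains a face of $\cS$ that by (e) lies in a unique $\cS_i^+$, and this rules out both alternatives when $I$ is a nonempty proper subset. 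The step I expect to be the main obstacle is the boundary bookkeeping of the third paragraph: establishing cleanly that $\partial\cS_i$ splits into a part near $\{x,y\}$ and a part near $\{x^*,y^*\}$ and that each is exactly annihilated by the corresponding $\dagger$-set. The genuinely new feature compared with \cite{CJT1} is the presence of two contraction sites and the composite contraction $\gamma$, but the hypothesis $xy^*\notin E$, together with $*$ having no fixed edge, is exactly what decouples the two sites, so the existing one-site computation can be invoked on each half.
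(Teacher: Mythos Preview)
Your proposal is correct and follows essentially the same route as the paper: set up the decoupling of the two contraction sites using the non-edge hypotheses, decompose $\partial\cS_i$ into its pieces near $\{x,y\}$ and $\{x^*,y^*\}$, show $\partial\cS_i^+=\emptyset$ via the one-site computation on each half, and then run the minimality argument to upgrade ``cycle'' to ``\csir{$k$}''.

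The one place where you diverge from the paper, and where your write-up is a bit thin, is (\ref{cs-f}). The paper does \emph{not} obtain $\cS=\triangle_i\cS_i^+$ by ``sorting'' the faces into three classes; instead it argues by contradiction. Setting $\cR=\cS\triangle(\triangle_i\cS_i^+)$, any $U\in\cR$ must contain $\{x,y\}$ or $\{x^*,y^*\}$ by (\ref{cs-e}); then, since $\cR$ is a $k$-cycle containing no trivial circuit, a neighbouring $U'\in\cR$ with $U'\supset U-x$ is forced to contain $\{x^*,y^*\}$, whence $yy^*\in E(\cS)$, a contradiction. Your direct approach can be made to work, but it does not reduce verbatim to the one-site computation of \cite{CJT1}: in the symmetric setting the $\cS_i$ partition $\ant(\{x,y\})\cap\ant(\{x^*,y^*\})$ rather than $\ant(\{x,y\})$, so one needs the extra observation that any $W\in\cS$ containing $K-x$ (for $K\supset\{x,y\}$) automatically lies in $\ant(\{x^*,y^*\})$ because $yx^*,yy^*\notin E$, after which a parity count against $\partial\cS=\emptyset$ gives the result. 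Also, in your derivation of (\ref{cs-b}) you cite (\ref{cs-d}); the fact you actually need---that every face of $\cS$ lies in some $\cS_i^+$---comes from (\ref{cs-f}) (or from (\ref{cs-e}) together with (\ref{cs-a})), not from (\ref{cs-d}).
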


\begin{proof} 
    We adopt the definitions of $\cS'_i$, $\cS_i$ and $\cS_i^\dagger$ given in the definition of a $\mathbb{Z}_2$-symmetric Fogelsanger definition. 
    Note that, for all $1\leq i\leq m$, $\cS_i \neq \cS$ since $xy \not\in E(\cS_i)$.
    Suppose $F \in \partial \cS_i$. Then $F \cap \{x,y,x^*,y^*\} \neq \emptyset$ otherwise $F \in 
    \partial S$ contradicting the fact that $\cS$ is a simplicial $k$-cycle. For $v \in \{x,y,x^*,y^*\}$
    let 
    $\cU_v = \{U \in \partial \cS_i: v \in U\}$. Since no edge in $E(\cS_i)$ joins two vertices of $\{x,y,x^*,y^*\}$, we have  
    $\partial \cS_i = 
    \cU_x \sqcup \cU_y \sqcup \cU_{x^*} \sqcup \cU_{y^*}$. Furthermore since $\partial (\cS_i/xy)/x^*y^*
    = \partial \cS'_i = \emptyset$ it follows that 
    \begin{equation}
        \label{eqn_props0}
        \text{the mapping $F \mapsto F-y+x$ is a bijection $\cU_y \rightarrow \cU_x$.}
    \end{equation}
    and 
    \begin{equation}
        \label{eqn_props0_star}
        \text{the mapping $F \mapsto F-y^*+x^*$ is a bijection $\cU_{y^*} \rightarrow \cU_{x^*}$.}
    \end{equation}
    In particular
    \begin{equation}
        \label{eqn_props1}
        \cS^\dagger_i = \{F +y: F \in \cU_x\}.
    \end{equation}
    Also $*$ induces a bijection $\cU_x \rightarrow \cU_{x^*}$ and hence
    \begin{equation}
        \label{eqn_props2}
        |\cS^\dagger_i| = |\cU_x|= |\cU_y|= |\cU_{x^*}|= |\cU_{y^*}|.
    \end{equation}
    Now if $xy \not\in \cS_i^+$ then $\cS^\dagger_i =\emptyset$ and it follows from (\ref{eqn_props2})
    that $\partial \cS_i = \emptyset$,
    contradicting the fact that $\cS$ is a \csir{$k$} and $\emptyset \neq \cS_i \subsetneq \cS$.
    So $xy \in E(\cS_i^+)$ and since $\cS_i^+$ is $*$-invariant, $x^*y^* \in E(\cS_i^+)$, 
    proving (\ref{cs-a}).

    The definition of $\cS_i^+$ implies that $(\cS_i^+/xy)/x^*y^* = \cS'_i$. Since $\cS_i'$ is a \csir{$k$}, this gives (\ref{cs-c}).

    \begin{claim}
        \label{clm_boundaries}
        $\partial( \cS_i^\dagger \cup (\cS_i^\dagger)^*) = \partial \cS_i$.
    \end{claim}

    \begin{proof}[Proof of claim.]
        From (\ref{eqn_props2}) and the definition of $\cS_i^\dagger$ it follows that,
        for $0 \leq s \leq k-1$, the map $F \mapsto F+y$ is a bijection between the set of $s$-faces 
        of $\cU_x$ that contain $x$ and the set of $(s+1)$-faces of $\cS^\dagger_i$ that contain $\{x,y\}$. 
        Since $\partial \cS_i$ is a simplicial $(k-1)$-cycle, every $(k-2)$-face of $\cU_x$
        that contains $x$ belongs to an even number of $(k-1)$-faces of $\cU_x$. Therefore every $(k-1)$-face of
        $\cS_i^\dagger$ that contains $\{x,y\}$ belongs to an even number of $k$-faces of $\cS_i^\dagger$. 
        Hence $\partial \cS_i^\dagger = \{K-x: K \in \cS_i^\dagger\} \cup \{K-y: K \in \cS_i^\dagger\}
        = \cU_y \sqcup \cU_x$.  By symmetry, $\partial( (\cS_i^\dagger)^*) = \cU_{y^*} \sqcup \cU_{x^*}$. Therefore 
        $\partial \cS_i = 
    \cU_x \sqcup \cU_y \sqcup \cU_{x^*} \sqcup \cU_{y^*}=\partial( \cS_i^\dagger \cup (\cS_i^\dagger)^*) $.
    \end{proof}

    Since
    $\cS_i^+ = \cS_i \triangle ( \cS_i^\dagger \cup (\cS_i^\dagger)^*)$, Claim \ref{clm_boundaries} implies
    that 
    $\partial \cS_i^+ = \emptyset $ and hence $\cS_i^+$ is a $\Z_2$-symmetric $k$-cycle.

    Suppose $\cT\subseteq \cS_i^+$ is a \csir{$k$}. If $xy, x^*y^* \not \in E(\cT)$ then $\cT \subsetneq \cS$,
    contradicting the fact that $\cS$ is a \csir{$k$}. So $xy,x^*y^* \in E(\cT)$. Now $(\cT/xy)/x^*y^*$ is 
    a non-empty $\Z_2$-symmetric $k$-cycle contained in the \csir{$k$} $\cS'_i$. Hence $(\cT/xy)/x^*y^*=\cS_i'$ and $\cS_i\subseteq \cT$.
    Since $\partial \cT  = \emptyset $ it follows that $\cT \supset \cS_i^\dagger$ and so 
    $\cT = \cS_i^+$. Hence $\cS_i^+$ is a \csir{$k$}, which is nontrivial  by (\ref{cs-c}).
    Since $\cS_i^+\setminus \cS \subset \cS_i^\dagger \cup (\cS_i^\dagger)^*$, each $K \in \cS_i^+\setminus \cS$ is a clique of $G$ that either contains $\{x,y\}$ or contains $\{x^*,y^*\}$. This completes the proof of (\ref{cs-d}).

    If $U\in \cS$ contains neither $\{x,y\}$ nor $\{x^*,y^*\}$ then $U$
    is an element of $\cS_i$ for a unique $i$, proving (\ref{cs-e}).

    We next prove (\ref{cs-f}). 
Suppose, for a contradiction, that 
    $\cR= \cS \triangle \left(\triangle_{i=1}^m \cS_i^+\right)\neq \emptyset$ and choose $U \in \cR$. By 
    (\ref{cs-e}), 
    either $\{x,y\} \subset U$ or $\{x^*,y^*\} \subset U$. Suppose $\{x,y\} \subset U$ (a similar argument 
    works for the other case).
    Now $\cR$ is a simplicial $k$-cycle and since $\cS$ and $\cS_i^+, 1\leq i\leq m$, are all
    nontrivial \csir{$k$}s, $\cR$ does not contain any trivial simplicial $k$-circuit. 
    It follows that there is 
    some $U' \in\cR$ such that $U \cap U' = U -x$. By (\ref{cs-e}), $U'$ must contain $\{x^*,y^*\}$ and so
    $yy^* \in E(\cR) \subset E(\cS)$, contradicting the assumption that $*$ is a simplicial involution on $\cS$. Therefore $\cR$ must be empty - this proves
    (\ref{cs-f}). 

    To verify (\ref{cs-g}) we suppose, for a contradiction, that there exists a proper subset $I$ of $\{1,\dots,m\}$ 
    such that  $\cS_j^+ \cap \triangle_{i \in I} \cS_i^+   = \emptyset $ for all $j \in J = \{1,\dots,m\} \setminus I$.
    Then, by (\ref{cs-f}), $\cS = \triangle_{i=1}^m \cS_i^+ = \triangle_{i \in I} \cS_i^+ \sqcup \triangle_{j \in J} \cS_j^+$. 
    This yields a partition of $\cS$ into two $\Z_2$-symmetric $k$-cycles, contradicting the hypothesis that 
    $\cS$ is a \csir{$k$}. This proves (\ref{cs-g}).
    
    We can now complete the proof of the lemma by verifying  (\ref{cs-b}). Observe that, if $uv \in E(\cS)$ and $uv\not\in \{xy,x^*y^*\}$ then, since $\cS$ is a nontrivial  \csir{$k$} and $xy^*,x^*y\not\in E(\cS)$, there is some $U \in \cS$ such that $u,v \in U$ and $\{x,y\}, \{x^*,y^*\} \not\subset U$. 
    By (\ref{cs-e}), $uv \in E(\cS_i^+)$ for some $i$. Using this, together with (\ref{cs-a}), we see that $\bigcup_{i = 1}^m E(\cS_i^+)
    = E$. Since $G(\cS)$ has no isolated vertices, this implies that $\bigcup_{i=1}^m V(\cS_i^+) = V$. 
    This proves (\ref{cs-b}).
\end{proof}

We need one more additional property of $\mathbb{Z}_2$-symmetric Fogelsanger decompositions.
\begin{lemma}\label{lem:h}
Let $(\cS,*)$ be a \csir{$k$} and $xy\in E(\cS)$  with $xy^*\not\in E(\cS)$. Let $\{\cS_1',\cS_2',\ldots,\cS_m'\}$ be a partition of $(\cS/xy)/x^*y^*$ into \csir{$k$}s and $(\cS_1^+, \dots, \cS_m^+)$ be the $\mathbb{Z}_2$-symmetric Fogelsanger decomposition for $\cS$ at $xy$ corresponding to this partition.
Suppose  $\cS_j'=\cT_j'\sqcup \cT_j'^*$ for some simplicial $k$-circuit $\cT_j'$ with $|V(\cT_j')\cap V(\cT_j'^*)|\leq 2k-2$.
Then $\cS_j^+=\cT_j\sqcup \cT_j^*$ for some simplicial $k$-circuit $\cT_j$ with $|V(\cT_j)\cap V(\cT_j^*)|\leq |V(\cT_j')\cap V(\cT_j'^*)|+2$.
\end{lemma}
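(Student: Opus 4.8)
The plan is to transport the decomposition $\cS_j' = \cT_j' \sqcup \cT_j'^*$ back through the contraction bijection $\gamma$ to obtain the corresponding splitting of $\cS_j^+$. Recall that $\cS_j = \gamma^{-1}(\cS_j')$, where $\gamma : \ant(\{x,y\}) \cap \ant(\{x^*,y^*\}) \to (\cS/xy)/x^*y^*$ is the canonical bijection. Since $\cT_j'$ is a simplicial $k$-circuit disjoint from its image $\cT_j'^*$, and since $\gamma$ restricted to a $*$-invariant subfamily commutes with $*$ (because $\gamma$ replaces $y$ by $x$ on faces containing $y$ and $y^*$ by $x^*$ on faces containing $y^*$, and $*$ swaps these two roles), the preimage $\gamma^{-1}(\cT_j')$ and $\gamma^{-1}(\cT_j'^*)$ are again swapped by $*$. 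So first I would set $\cU_j = \gamma^{-1}(\cT_j')$ and verify that $\cU_j^* = \gamma^{-1}(\cT_j'^*)$ and $\cS_j = \cU_j \sqcup \cU_j^*$; the fact that $\gamma$ is a bijection preserving the property "being a simplicial $k$-cycle" (noted in Section \ref{sec_bk}) and preserving minimality then shows $\cU_j$ is a simplicial $k$-circuit.

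Next I would define $\cT_j = \cU_j \cup \cS_j^\dagger$ — i.e. attach the whole "filling" family $\cS_j^\dagger$ of cliques containing $\{x,y\}$ to $\cU_j$, and symmetrically $\cT_j^* = \cU_j^* \cup (\cS_j^\dagger)^*$ — so that $\cS_j^+ = \cS_j \cup \cS_j^\dagger \cup (\cS_j^\dagger)^* = \cT_j \sqcup \cT_j^*$. Here I must check that $\cS_j^\dagger$ is disjoint from $\cU_j^*$ and from $(\cS_j^\dagger)^*$: every element of $\cS_j^\dagger$ contains $\{x,y\}$, hence also contains $y$; every element of $\cU_j^*$ omits $y$ (since $\cU_j^* \subset \cS_j = \gamma^{-1}(\cS_j')$ lives in $\ant(\{x,y\})\cap\ant(\{x^*,y^*\})$, so no element contains both $x$ and $y$, and by symmetry an element of $\cU_j^*$ containing $y$ would force... — more precisely elements of $\cS_j$ contain neither $\{x,y\}$ nor $\{x^*,y^*\}$, so a $k$-face of $\cS_j$ containing $y$ cannot contain $x$, and the clique property forces it not to contain $y^*$ either, but it could still contain $y$; I need the sharper observation that $\cU_j^\dagger$ built from $\partial \cU_j$ equals $\cS_j^\dagger$ restricted appropriately); and since $*$ is free, no $K$ containing $\{x,y\}$ can equal one containing $\{x^*,y^*\}$. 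Then the same boundary computation as in Claim \ref{clm_boundaries} applied to $\cU_j$ alone (rather than to $\cS_j$) shows $\partial(\cU_j \cup \cS_j^\dagger)$... — actually the cleaner route is: $\cS_j^+ = \cT_j \sqcup \cT_j^*$ with $\cT_j$ a $*$-image-disjoint half, so since $\cS_j^+$ is a nontrivial \csir{$k$} by Lemma \ref{lem_cs_fog}(\ref{cs-d}) and is not a simplicial $k$-circuit (it decomposes as $\cT_j \sqcup \cT_j^*$ with $\cT_j \neq \emptyset$), Lemma \ref{lem_cs_circuits} forces $\cT_j$ to be a nontrivial simplicial $k$-circuit and $\cT_j^*$ its image. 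This is the slick way to get that $\cT_j$ is a circuit without a hands-on boundary argument.

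Finally I would bound $|V(\cT_j) \cap V(\cT_j^*)|$. Write $S = V(\cT_j') \cap V(\cT_j'^*)$, so $|S| \le 2k-2$. The vertices of $\cT_j$ are those of $\cU_j$ together with possibly $y$ (every $K \in \cS_j^\dagger$ has the form $F + y$ with $F$ a face of $\cU_x \subseteq \partial \cU_j$, so $V(\cS_j^\dagger) \subseteq V(\cU_j) \cup \{y\}$); likewise $V(\cT_j^*) \subseteq V(\cU_j^*) \cup \{y^*\}$. Under $\gamma$, $V(\cU_j)$ and $V(\cT_j')$ differ only in that $y$ (if present in $\cU_j$) is renamed $x$; but since every element of $\cS_j$ omits $x$, in fact $V(\cT_j') = (V(\cU_j) \setminus \{y\}) \cup \{x\}$ if $y \in V(\cU_j)$, and $x \notin V(\cU_j)$. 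Carefully tracking which of $x, y, x^*, y^*$ lie in each vertex set, one sees $V(\cT_j) \cap V(\cT_j^*) \subseteq \gamma^{-1}$-image of $(V(\cT_j') \cap V(\cT_j'^*))$ enlarged by at most the two extra vertices $y$ and $y^*$ that the $\dagger$-construction can add and that $\gamma$ folds $x \leftrightarrow y$, $x^* \leftrightarrow y^*$; this yields $|V(\cT_j) \cap V(\cT_j^*)| \le |S| + 2$. The main obstacle I anticipate is exactly this last bookkeeping: one must be precise about whether $x$ or $y$ (resp. $x^*$ or $y^*$) appears in $V(\cU_j)$, $V(\cS_j^\dagger)$, and their $\gamma$-images, and argue that $\gamma$ can only "merge" the pair $\{x,y\}$ into a single vertex of the separator and the pair $\{x^*,y^*\}$ into another, so at most two vertices are gained over the bound for $\cT_j'$. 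Everything else — the commutation of $\gamma$ with $*$, the transfer of the circuit property, the disjointness of the halves — is routine given Lemmas \ref{lem_cs_circuits} and \ref{lem_cs_fog}.
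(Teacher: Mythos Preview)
Your direct pull-back construction is a genuinely different route from the paper's proof. The paper never tries to build $\cT_j$ explicitly: it simply observes that the set $V(\cT_j')\cap V(\cT_j'^*)$, enlarged by at most $\{y,y^*\}$, is a $*$-invariant vertex separator $X$ of $G(\cS_j^+)$ with $|X|\le 2k$, and then applies Lemma~\ref{lem_smallsep}. That lemma immediately yields the desired decomposition $\cS_j^+=\cT_j\sqcup\cT_j^*$, except in the alternative where $G(\cS_j^+)[X]\cong G(\cB_{k-1})$; but that alternative forces $|X|=2k$, hence $x,y,x^*,y^*\in X$, and the crosspolytope structure would give $xy^*\in E$, contradicting the hypothesis.

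Your approach, as written, has a real gap. The assertion that $\gamma^{-1}$ preserves the cycle (or circuit) property is false; Section~\ref{sec_bk} only records that contraction sends cycles to cycles, and the entire purpose of the $\dagger$-construction is to repair the boundary created by pulling back. More seriously, your candidate $\cT_j=\cU_j\cup\cS_j^\dagger$ need not be a cycle at all. Since $\cS_j^\dagger$ is defined from $\partial\cS_j=\partial\cU_j\triangle\partial\cU_j^*$ (not from $\partial\cU_j$), a short computation using Claim~\ref{clm_boundaries} gives
\[
\partial\cT_j \;=\; (\partial\cU_j)_{x^*}\ \sqcup\ (\partial\cU_j)_{y^*}\ \sqcup\ \bigl[(\partial\cU_j)_{x^*}\bigr]^*\ \sqcup\ \bigl[(\partial\cU_j)_{y^*}\bigr]^*,
\]
which vanishes only when $\cU_j$ has no $k$-face containing $x^*$ or $y^*$, equivalently when $x^*\notin V(\cT_j')$. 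Nothing in the hypotheses excludes $x,x^*\in V(\cT_j')\cap V(\cT_j'^*)$, and in that situation $\partial\cT_j$ can be non-empty. Your ``slick'' invocation of Lemma~\ref{lem_cs_circuits} is then circular: you need $\cT_j$ to be a cycle to conclude that $\cS_j^+$ is not a circuit, and you need $\cT_j$ to be a circuit to use the ``moreover'' clause. The separator argument via Lemma~\ref{lem_smallsep} bypasses all of this bookkeeping.
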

\begin{proof}
    Since $\cS_j'=(\cS_j/xy)/x^*y^*$, the hypothesis that
    $|V(\cT') \cap V(\cT'^*)| \leq 2k-2$ implies that
    $G(\cS_j^+)$ has a $*$-invariant vertex separator $X$ with $V(\cT') \cap V(\cT'^*)\subseteq X\subseteq V(\cT') \cap V(\cT'^*)+y+y^*$. 
    In particular $|X|\leq 2k$.
    By Lemma~\ref{lem_sep_2k}, either 
    $\cS_j^+=\cT_j\sqcup \cT_j^*$ for some simplicial $k$-circuit $\cT_j$ with $V(\cT_j)\cap V(\cT_j^*)=X$ or 
    $G(\cS_j^+)[X]$ is isomorphic to $G(\cB_{k-1})$.
    In the former case, we have the statement.
    So let us assume the latter case.
    Then we have $|X|=2k$ and hence $|X|=|V(\cT') \cap V(\cT'^*)|+2$, so $\{x,y,x^*,y^*\}\subseteq X$.
    Together with $G[X] \cong G(\cB_{k-1})$, this implies that 
    $xy^* \in E$, contradicting an hypothesis of the lemma.
\end{proof}

\subsection{Main rigidity theorem}
We can now establish our main result.
We begin with the following special case.
\begin{lemma}\label{lem:special}
Let $\Gamma=\Gamma_{t,k+1}$ for $0 \leq t \leq k$.
Let $(\cS,\ast)$ be a $\mathbb{Z}_2$-irreducible $k$-cycle such that 
$\cS=\cT\sqcup \cT^*$ for some $k$-circuit $\cT$ with $h:=|V(\cT)\cap V(\cT^*)|\leq 2k$.
Then $(G(\cS),\ast)$ is $\Gamma$-rigid in $\R^{k+1}$ if and only if $h\geq \max\{2(k-t),k+1,2t\}$.
\end{lemma}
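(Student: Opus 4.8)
The plan is to exploit the very rigid structure of $\cS = \cT \sqcup \cT^*$ forced by the hypotheses. Write $W = V(\cT)\cap V(\cT^*)$ and note that, because $\cT$ is a simplicial $k$-circuit and $\ast$ is free, $W$ is an $\ast$-invariant vertex separator of $G(\cS)$, and moreover $G(\cS)[W] = G(\cT)[W] \cup G(\cT^*)[W]$ carries the induced non-adjacent vertex pairing. For the ``only if'' direction I would invoke Lemma \ref{lem:symmetric_maxwell}: if $(G(\cS),\ast)$ is $\Gamma$-rigid, then since $|V(\cS)| \geq 2k+2 \geq 2(k+1) = 2d$ we get the edge bound \eqref{eq:symmetric_maxwell}. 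The $g_2$-type counts for the two copies $\cT,\cT^*$, glued along the $h$ shared vertices, must be compatible with this symmetric Maxwell bound; carrying out the bookkeeping (each of $\cT,\cT^*$ satisfies the ordinary Maxwell count \eqref{eq:rigidity_bound} in $\R^{k+1}$, and the union loses $\binom{k+1}{2}$ worth of edges across the separator only if the overlap is affinely spanning) forces $h \geq \max\{2(k-t), k+1, 2t\}$. The three terms correspond exactly to the dimensions computed in Lemma \ref{lem_affine}: $h \geq k+1$ is needed so that $W$ can affinely span a hyperplane in \emph{some} realisation, while $2(k-t)$ and $2t$ come from the requirement that the symmetric and anti-symmetric parts of the overlap each be large enough, via the min-term in \eqref{eq:symmetric_maxwell}.

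For the ``if'' direction, suppose $h \geq \max\{2(k-t),k+1,2t\}$. I would realise $\cT$ and $\cT^*$ separately. Apply Theorem \ref{thm_smallX} (Fogelsanger's theorem in its non-generic form) to the simplicial $k$-circuit $\cT$ with the pairing $\ast$ restricted to $X_{V(\cT)} = W$ (so $|X| = h$): wait — Theorem \ref{thm_smallX} requires $|X| \leq 2k$, which holds by hypothesis, so $(G(\cT),\ast|_W)$ is $\Gamma$-rigid in $\R^{k+1}$, and likewise for $\cT^*$. Now take a generic $\Gamma$-framework $(G(\cS),\ast,p)$; its restrictions to $G(\cT)$ and $G(\cT^*)$ are generic $\Gamma$-frameworks of those pieces, hence infinitesimally rigid. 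It remains to glue them using the Gluing Theorem \ref{thm_pcs_gluing}(a) with $H_1 = G(\cT)$, $H_2 = G(\cT^*)$, $m = |X_{V_1}\cap X_{V_2}| = h$, $|V_1\cap V_2| = h$. The hypothesis of \ref{thm_pcs_gluing}(a) reads
\[
h \geq (k+1) + h - 1 - \min\{h/2, (k+1)-t\} - \min\{h/2 - 1, t\},
\]
i.e. $\min\{h/2,(k+1)-t\} + \min\{h/2-1,t\} \geq k$. I would check case by case (depending on which arguments achieve the minima) that this inequality is \emph{equivalent} to $h \geq \max\{2(k-t), k+1, 2t\}$: the term $h/2 \geq (k+1)-t$ is $h \geq 2(k-t)+2$, the term $h/2-1 \geq t$ is $h \geq 2t+2$, and when both minima are achieved by the $h/2$-branches we need $h/2 + h/2 - 1 \geq k$, i.e. $h \geq k+1$. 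So whenever $h$ meets the stated bound, Theorem \ref{thm_pcs_gluing}(a) applies and $(G(\cS),\ast)$ is $\Gamma$-rigid.

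The main obstacle is the arithmetic in this last step: the floor/ceiling behaviour of $h/2$ for odd $h$ and the interaction between the two min-terms create several cases, and one must verify that the ``if'' threshold produced by the gluing inequality matches \emph{exactly} the ``only if'' threshold produced by Lemma \ref{lem:symmetric_maxwell}, with no gap. A secondary subtlety: to invoke \ref{thm_pcs_gluing}(a) I need $h = m > 0$, so the degenerate possibility $h = 0$ (which would fall under \ref{thm_pcs_gluing}(b) and also under the trivial $\Z_2$-irreducible case) must be handled separately — but $h = 0$ would make $\cS = \cT \sqcup \cT^*$ disconnected, hence never $\Gamma$-rigid, consistent with $0 < \max\{2(k-t),k+1,2t\}$ since $k \geq 2$. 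Finally one should double-check that the genericity of $p$ indeed makes $p|_{V(\cT)}$ a generic $\Gamma$-framework of $(G(\cT),\ast|_W)$ — this is immediate since the transcendence degree is additive over the disjoint parts $V(\cT)\setminus W$, $V(\cT^*)\setminus W$, $W$.
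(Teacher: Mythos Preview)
Your ``if'' direction is correct and is exactly the paper's argument: apply Theorem~\ref{thm_smallX} to each of $\cT,\cT^*$ (legitimate since $h\le 2k$) and then glue via Theorem~\ref{thm_pcs_gluing}(a), checking that the gluing hypothesis is equivalent to $h\ge\max\{2(k-t),k+1,2t\}$.

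Your ``only if'' direction, however, has a genuine gap. You propose to derive the lower bound on $h$ from the symmetric Maxwell inequality (Lemma~\ref{lem:symmetric_maxwell}), but that lemma only bounds the \emph{total} number of edges of $G(\cS)$ from below. Nothing prevents $\cT$ from being a simplicial $k$-circuit with many more edges than the Maxwell minimum; in that case $|E(\cS)|$ can comfortably exceed the right-hand side of \eqref{eq:symmetric_maxwell} regardless of how small $h$ is, and no constraint on $h$ falls out of the bookkeeping you describe. The sentence ``the union loses $\binom{k+1}{2}$ worth of edges across the separator only if the overlap is affinely spanning'' conflates an edge count with a geometric spanning condition and does not yield a valid inequality.

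The paper instead argues geometrically. Take a generic $\Gamma$-framework $(G(\cS),\ast,p)$. Since $W=V(\cT)\cap V(\cT^*)$ is $\ast$-invariant with $|W|=h$, Lemma~\ref{lem_affine} (applied with $n=h/2$, $d=k+1$) gives
\[
\dim\aff\bigl(p(W)\bigr)=\min\{h/2,\,k+1-t\}+\min\{h/2-1,\,t\},
\]
and one checks directly that $h<\max\{2(k-t),k+1,2t\}$ forces this to be at most $k-1$. Then the infinitesimal rotation of $\R^{k+1}$ about the affine flat $\aff(p(W))$, applied to $p|_{V(\cT)}$ and extended by zero on $V(\cT^*)\setminus W$, is a nontrivial infinitesimal flex of $(G(\cS),p)$. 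You gesture at Lemma~\ref{lem_affine} in passing, but the mechanism you need is this explicit rotation flex, not a counting argument.
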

\begin{proof}
Suppose $h \geq \max\{2(k-t),k+1,2t\}$.
Let $*_T$ be the non-adjacent vertex pairing induced on $V(\cT)\cap V(\cT^*)$ by $*$.
Since $h\leq 2k$, we can apply Theorem~\ref{thm_smallX} to $(G(\cT),\ast_T)$ to deduce that $(G(\cT),\ast_T)$ is $\Gamma$-rigid.
Symmetrically, $(G(\cT^*),\ast_T)$ is $\Gamma$-rigid.
Now Theorem~\ref{thm_pcs_gluing} (the gluing theorem) implies that the union of 
$(G(\cT),\ast_T)$ and $(G(\cT^*),\ast_T)$ is $\Gamma$-rigid if $h\geq (k+1)+h-1-\min\{h/2,d-t\}-\min\{h/2-1,t\}$,
or equivalently $\min\{h/2,k+1-t\}+\min\{h/2-1,t\}\geq k$.
By an elementary calculation, one can check that this is equivalent to $h\geq \max\{2(k-t),k+1,2t\}$.

On the other hand, suppose $h < \max\{2(k-t),k+1,2t\}$ and let $(G(\cS),\ast,p)$ be a generic $\Gamma$-symmetric framework. If $\cT$ and 
$\cT^*$ are trivial simplicial $k$-circuits then, by Lemma \ref{lem_cs_circuits},
$V(\cT) \cap V(\cT^*) = \emptyset$ and it is clear that $G(\cS)$ is not $\Gamma$-rigid. So we may assume that 
$\cT$ and $\cT^*$ are nontrivial simplicial $k$-circuits. It follows easily that 
$$\dim(\aff(p(V(\cT)))) = \dim(\aff(p(V(\cT^*)))) = k+1.$$ However the assumption 
$h < \max\{2(k-t),k+1,2t\}$ implies that $H = \aff(p(V(\cT)\cap V(\cT^*)))$ has 
dimension at most $k-1$ and so $(G(\cT),\ast_T,p)$ has a nontrivial infinitesimal flex induced by rotation about $H$.
\end{proof}

\begin{theorem}
    Let $(\cS,*)$ be a nontrivial \csir{$k$} for some $k \geq 2$. Suppose that  $\Gamma=\Gamma_{t,k+1}$ for some $0\leq t \leq k$ and that $\Gamma$ is not a rotation group when $k=2$.
    Then the following statements are equivalent.
    \begin{itemize}
        \item[(i)] $(G(\cS),\ast)$ is $\Gamma$-rigid in $\R^{k+1}$;
        \item[(ii)] $\cS$ is a simplicial $k$-circuit or $\cS=\cT\sqcup \cT^*$ for some simplicial $k$-circuit $\cT$ with $|V(\cT)\cap V(\cT^*)|\geq \max\{2(k-t),k+1,2t\}$;
        \item[(iii)] for every $X \subset V(\cS)$ such that
    $X^* = X$ and $|X| < \max\{2(k-t),k+1,2t\}$, 
    $G(\cS) - X$ is connected.
    \end{itemize}
    \label{thm_real_main}
\end{theorem}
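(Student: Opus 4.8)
The plan is to prove the cycle of implications (i) $\Rightarrow$ (iii) $\Rightarrow$ (ii) $\Rightarrow$ (i), since (ii) $\Rightarrow$ (i) is the substantive rigidity statement while the other two are essentially combinatorial or follow from the tools already assembled. For (ii) $\Rightarrow$ (i), the case $\cS = \cT \sqcup \cT^*$ is handled directly by Lemma~\ref{lem:special}, so the real work is the case when $\cS$ is a simplicial $k$-circuit. Here I would induct on $|V(\cS)|$, using the base case supplied by Lemma~\ref{lem_small_cs_circuits} (when $|V(\cS)| = 2k+2$ we have $\cS \cong \cB_k$, and $(G(\cB_k),\ast)$ is $\Gamma$-rigid by Lemma~\ref{lem:cross} since $\Gamma$ is not a rotation group when $k=2$). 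For the inductive step, pick any edge $xy \in E(\cS)$; since $\ast$ is a free simplicial involution, if necessary replace $y$ by a neighbour of $x$ to ensure $xy^* \notin E$ (such a choice exists because $N_G(x)$ cannot be entirely contained in $\{y^*\} \cup (N_G(x))^*$-type obstructions — this needs a short argument). Take a $\mathbb{Z}_2$-symmetric Fogelsanger decomposition $(\cS_1^+,\dots,\cS_m^+)$ of $\cS$ at $xy$ as in Lemma~\ref{lem_cs_fog}.

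For each $\cS_i^+$, by Lemma~\ref{lem_cs_fog}\eqref{cs-d} it is a nontrivial \csir{$k$}, and by \eqref{cs-c} $(\cS_i^+/xy)/x^*y^*$ is a \csir{$k$} with strictly fewer vertices. The dichotomy I would exploit is: either $|V(\cS_i^+)| < |V(\cS)|$, in which case I want to apply the inductive hypothesis to $(G(\cS_i^+),\ast)$; or $|V(\cS_i^+)| = |V(\cS)|$, in which case $(\cS_i^+/xy)/x^*y^*$ has fewer vertices and I apply the inductive hypothesis there, then recover $\Gamma$-rigidity of $(G(\cS_i^+),\ast)$ by the double vertex-split result, Theorem~\ref{thm_whiteley_pcs_contraction} (checking its hypotheses: $xy, x^*y^* \in E$, $xy^*,x^*y \notin E$ hold by construction, and the required sets $C \subset N(x) \cap N(y)$, $D \subset N(x^*)\cap N(y^*)$ of size $k$ with $|X_C|,|X_D| \le 2$ come from a facet of $\cS_i^+$ containing $\{x,y\}$ together with a common neighbour, exactly as in the proof of Theorem~\ref{thm_smallX}). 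A subtlety: the inductive hypothesis requires $\cS_i^+$ (or its contraction) to satisfy (ii), i.e.\ to be a simplicial $k$-circuit or a suitable $\cT \sqcup \cT^*$; when it is of the latter type I must verify the separator-size bound $\max\{2(k-t),k+1,2t\}$, and this is precisely where Lemma~\ref{lem:h} is needed to control how the separator size changes under the contraction, ensuring it does not drop below the threshold. Once every $(G(\cS_i^+),\ast)$ is known to be $\Gamma$-rigid, I glue them together: using Lemma~\ref{lem_cs_fog}\eqref{cs-g} to order the pieces so each new piece meets the union of the previous ones in a $(k+1)$-clique not already in $\cS$ (hence, by \eqref{eqn_clique1}-type reasoning, $2(k+1)$ vertices in general position with empty $X$-part), and applying the Gluing Theorem~\ref{thm_pcs_gluing}(b) repeatedly. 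Since $\bigcup_i V_i = V$ and $\bigcup_i E_i = E$ by \eqref{cs-b}, this yields $\Gamma$-rigidity of $(G(\cS),\ast)$.

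For (i) $\Rightarrow$ (iii), I argue the contrapositive: if $X = X^*$, $|X| < \max\{2(k-t),k+1,2t\}$ and $G(\cS) - X$ is disconnected, then in any $\Gamma$-symmetric generic realisation the affine span of $p(X)$ has dimension at most $\dim(\aff(p(X)))$, which by Lemma~\ref{lem_affine} is strictly less than $k$ when $|X| < \max\{2(k-t),k+1,2t\}$ (this is exactly the elementary calculation appearing at the end of Lemma~\ref{lem:special}); a component of $G(\cS) - X$ together with $X$ then admits a nontrivial infinitesimal flex — rotation about $\aff(p(X))$ — that is compatible with the $\Gamma$-symmetry when extended by zero on the rest, contradicting infinitesimal rigidity. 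One must take a little care that this flex can be chosen $\Gamma$-symmetric (or $\Gamma$-anti-symmetric), but since $X$ is $\ast$-invariant and $\aff(p(X))$ is a $\gamma$-invariant subspace, an appropriate infinitesimal rotation about it lies in $M_{\rm sym}$ or $M_{\rm ant}$, so it genuinely obstructs rigidity.

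For (iii) $\Rightarrow$ (ii), I use Lemma~\ref{lem_sep_2k}: suppose $\cS$ is not a simplicial $k$-circuit, so by Lemma~\ref{lem_cs_circuits} $\cS = \cT \sqcup \cT^*$ for a nontrivial simplicial $k$-circuit $\cT$; let $h = |V(\cT) \cap V(\cT^*)|$. The set $X_0 = V(\cT) \cap V(\cT^*)$ is a $\ast$-invariant vertex separator of $G(\cS)$, and shrinking it to an inclusion-minimal one does not increase its size. If $h < \max\{2(k-t),k+1,2t\}$, then in particular $h \le 2k$ (one checks $\max\{2(k-t),k+1,2t\} \le 2k+1$ for $0 \le t \le k$, with the strict inequality giving $h \le 2k$), so Lemma~\ref{lem_sep_2k} applies to the minimal separator $X$; the first alternative gives $V(\cT) \cap V(\cT^*) = X$ hence $|X| = h < \max\{\dots\}$, and the second gives $|X| = 2k$ which forces $h \ge |X| - (\text{something bounded})$ — I would check that $2k \ge \max\{2(k-t),k+1,2t\}$ cannot coexist with $|X| = 2k$ being a separator unless $h$ itself is already $\ge \max\{\dots\}$, deriving a contradiction with (iii) in every case where $h$ is too small. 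Thus (iii) forces $h \ge \max\{2(k-t),k+1,2t\}$, which is (ii). The main obstacle throughout is the inductive step of (ii) $\Rightarrow$ (i): correctly matching the Fogelsanger pieces to the two forms allowed in (ii), and in particular using Lemma~\ref{lem:h} to guarantee that the $\cT \sqcup \cT^*$ pieces that arise still satisfy the separator lower bound so that the inductive hypothesis is genuinely applicable.
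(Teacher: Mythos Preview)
Your proposal has a genuine gap in the inductive step of (ii) $\Rightarrow$ (i), specifically in how you handle the Fogelsanger pieces $\cS_i^+$ that fail condition (ii). You write that Lemma~\ref{lem:h} ``ensures [the separator size] does not drop below the threshold'', but this is backwards: Lemma~\ref{lem:h} gives an \emph{upper} bound on $|V(\cT_i) \cap V(\cT_i^*)|$ in terms of the separator size of the contracted complex, not a lower bound. More fundamentally, even when $\cS$ itself is a simplicial $k$-circuit, the pieces $\cS_i^+$ can genuinely fail (ii): the paper's own worked example following (\ref{eqn_fog_part}) produces pieces each of which is a disjoint union of two tetrahedron boundaries, i.e.\ $\cT_i \sqcup \cT_i^*$ with $|V(\cT_i) \cap V(\cT_i^*)| = 0$. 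Such a piece is \emph{not} $\Gamma$-rigid, so your plan to establish $\Gamma$-rigidity of every $(G(\cS_i^+),\ast)$ and then glue cannot succeed as stated.

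The paper handles this by a different mechanism. When the contracted piece $\cS_i'$ fails (ii), minimality together with Lemma~\ref{lem:h} yields $\cS_i^+ = \cT_i \sqcup \cT_i^*$ with $|V(\cT_i) \cap V(\cT_i^*)| \le 2k$ (this is the correct direction of Lemma~\ref{lem:h}). The point is not that $\cS_i^+$ is rigid, but that the non-adjacent vertex pairing restricted to $\cT_i$ now has $|X| \le 2k$, so Theorem~\ref{thm_smallX} applies and $(G(\cT_i),\ast)$ and $(G(\cT_i^*),\ast)$ are each $\Gamma$-rigid. One then sets $\cW_i = \cT_i$ (rather than $\cS_i^+$), glues the $\cW_i$'s together, glues the $\cW_i^*$'s together, and finally glues the two halves; a closing case analysis (on whether some $\cW_i = \cS_i^+$, and whether some $\cT_i \cap \cT_j^* \neq \emptyset$) shows the halves meet in enough vertices for Theorem~\ref{thm_pcs_gluing}. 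This use of Theorem~\ref{thm_smallX} on the half-circuits $\cT_i$, rather than the inductive hypothesis on the full $\cS_i^+$, is the missing idea.

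A secondary gap: the existence of an edge $xy$ with $xy^* \notin E$ is not a ``short argument'' you can defer. If no such edge exists then $G[U \sqcup U^*] \cong G(\cB_k)$ for every $U \in \cS$, and the paper disposes of this case separately (Claim~\ref{clm:edge}) via Lemma~\ref{lem:cross}, strong connectivity, and gluing.
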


\begin{proof}
Let $c=\max\{2(k-t),k+1,2t\}$.

By Lemmas~\ref{lem_cs_circuits} and \ref{lem:special}, (i) implies (ii).
To see that  (ii) and (iii) are equivalent, observe that 
$c\leq 2k$ since $0 \leq t\leq k$.
Also, if $X$ is a $*$-invariant subset of $V(\cT)$ of size less than $2k$, then Lemma~\ref{lem_sep_2k} implies that $X$ is a vertex separator of $G(\cT)$ if and only if $\cS = \cT \sqcup \cT^*$ for some simplicial $k$-circuit $\cT$ with $V(\cT) \cap V(\cT^*) = X$.
The equivalence between (ii) and (iii) now follows.

It remains to prove that (ii) and (iii) imply (i).
    Suppose, for a contradiction, that $\cS$ is a counterexample to this statement with $|V(\cS)|$ minimal and 
    let $G(\cS)= G =(V,E) $.

    \begin{claim}
        \label{clm:edge}
        There exists an edge $xy \in E$ such that $x^*y \not\in E$.
    \end{claim}
    \begin{proof}[Proof of claim.]
Suppose, for a contradiction, that $x^*y \in E$ whenever  $xy \in E$. Then, for every $U \in \cS$, 
$G[U \sqcup U^*]$ can be obtained  from the complete graph on $U\sqcup U^*$ by deleting the set of edges $\{xx^*:x\in U\}$. Hence $G[U \sqcup U^*]$ is isomorphic to $G(\cB_k)$.  We can now apply Lemma~\ref{lem:cross} to deduce that 
    \begin{equation} 
        \label{eqn_inducedcross} 
        \text{$G[U \sqcup U^*]$  is $\Gamma$-rigid for all $U \in \cS$.}
    \end{equation}

    Suppose $\cS$ is a simplicial $k$-circuit.  Then $\cS$ is a strongly connected simplicial $k$-complex so, for all $U,U'\in \cS$, there exists a sequence of $k$-faces  $U_1,U_2,\ldots,U_m$ of $\cS$ such that $U_1=U$, $U_m=U'$ and $|U_i\cap U_{i+1}|=k$ for all $1\leq i\leq m-1$.
    Putting $H=\bigcup_{i=1}^mG[U_i \cup U_i^*]$, we can now use  
        (\ref{eqn_inducedcross}) and Theorem \ref{thm_pcs_gluing} to deduce that $(H,p|_H)$ is infinitesimally rigid.   This implies that every vertex of $G$ belongs to the unique maximal $\Gamma$-rigid subgraph of $G$ which contains $U\sqcup U^*$ and hence $G$ is  $\Gamma$-rigid. This contradicts the choice of $\cS$.

    Thus  $\cS$ is not a simplicial $k$-circuit. Then
$\cS = \cT \sqcup \cT^*$ for some simplicial $k$-circuit $\cT$ with  $|V(\cT) \cap V(\cT^*)| \geq c$ by the assumption that (ii) holds for $\cS$. Then $\cT$ is strongly connected so we can apply the argument in the previous paragraph to deduce that  $V(\cT)$  is contained in a $\Gamma$-rigid subgraph of $G$. 
By symmetry,  $V(\cT^*)$  is also contained in a $\Gamma$-rigid subgraph of $G$.   Since $V(G) = V(\cT) \cup V(\cT)^*$,  we can now apply Theorem~\ref{thm_pcs_gluing} to deduce that $G$ is $\Gamma$-rigid, again a contradiction.
    This completes the proof of the claim.
\end{proof}

    Claim \ref{clm:edge} tells us we can choose an edge $xy\in E$ such that $xy^*\not\in E$. 
Let $(\cS_1^+, \dots, \cS_m^+)$ be a $\mathbb{Z}_2$-symmetric Fogelsanger decomposition of $\cS$ at $xy$, and put $\cS_i' = (\cS_i^+/xy)/x^*y^*$ for all $1\leq i\leq m$.
    By Lemma~\ref{lem_cs_fog}(\ref{cs-c}), $\cS_i'$ is a $\mathbb{Z}_2$-irreducible cycle.

    \begin{claim}\label{claim_2}
        For all $1\leq i\leq m$, either $G(\cS_i^+)$ is $\Gamma$-rigid or $\cS_i^+=\cT_i\sqcup \cT_i^*$ for some simplicial $k$-circuit $\cT_i$ with $|V(\cT_i)\cap V(\cT_i^*)|\leq 2k$.
    \end{claim}
    \begin{proof}
 The proof splits into two cases depending on the $\Gamma$-rigidity of  $\cS_i' = (\cS_i^+/xy)/x^*y^*$.

        Suppose $\cS_i'$ is $\Gamma$-rigid.
        Since $xy \in E(\cS_i^+)$  and $\cS_i^+$ is a nontrivial \csir{$k$} by Lemma~\ref{lem_cs_fog}(a)(d), there exist 
$U_1,U_2\in \cS_i^+$ such that $|U_1 \cap U_2| = k$ and $\{x,y\} \subseteq U_1 \cap U_2$. Let 
$C = (U_1 \cup U_2)\setminus \{x,y\}$. Then  $C \subset N_{G(\cS_i^+)}(x) \cap N_{G(\cS_i^+)}(y)$, $|C|=k$ and $|C \cap C^*| \leq 2$. Similarly we can choose $D \subset N_{G(\cS_i^+)}(x^*) \cap N_{G(\cS_i^+)}(y^*)$ such that $|D| = k$ and 
$|D \cap D^*| \leq 2$. 
Theorem \ref{thm_whiteley_pcs_contraction} now implies that $G(\cS_i^+)$ is $\Gamma$-rigid.

Next suppose $\cS_i'$ is not $\Gamma$-rigid.
Then, by the minimality of $|V(\cS)|$, $\cS_i'$ does not satisfy (ii) in the statement of the theorem. Lemma~\ref{lem_cs_circuits} now implies that $\cS_i'=\cT'\sqcup \cT'^*$ for some $k$-circuit $\cT'$ with $|V(\cT')\cap V(\cT'^*)|<c$.
By Lemma~\ref{lem:h}, there is a simplicial $k$-circuit $\cT$ such that 
$\cS_i^+=\cT\sqcup \cT^*$ with $|V(\cT)\cap V(\cT^*)|\leq |V(\cT')\cap V(\cT'^*)|+2$.
Suppose $1 \leq t \leq k-1$. Then $c=\max\{2(k-t),k+1,2t\}\leq 2k-1$. Hence $|V(\cT)\cap V(\cT^*)|\leq |V(\cT')\cap V(\cT'^*)|+2\leq  c+1\leq 2k$, as required.
Hence we may assume that $t=0$ or $t=k$. Then $c=2k$. Since $|V(\cT')\cap V(\cT'^*)|$ and $c$ are both even, 
$|V(\cT)\cap V(\cT^*)|\leq |V(\cT')\cap V(\cT'^*)|+2\leq c=2k$.
This completes the proof.
    \end{proof}

We next define a sequence of simplicial $k$-cycles $\cW_1,\cW_2,\ldots, \cW_m$.
For each $1\leq i\leq m$, we set $\cW_i=\cS_i^+$ if  
$G(\cS_i^+)$ is $\Gamma$-rigid. Otherwise  we put $\cW_i=\cT_i$, where $\cT_i$ is the simplicial $k$-circuit given by Claim~\ref{claim_2} for ${\cal S}_i^+$.
We will show that
\begin{equation}\label{eq:rigidity}
\text{$G(\cW_i)$ and $G(\cW_i^*)$ are both $\Gamma$-rigid.}
\end{equation}
If $\cW_i=\cS_i^+$ then $G(\cW_i)$ is $\Gamma$-rigid by definition and we have $\cW_i^*=(\cS_i^+)^*=\cS_i^+$ because $\cS_i^+ $ is a \csir{$k$}.
On the other hand, if $\cW_i=\cT_i$, then we have 
$|V(\cT_i) \cap V(\cT_i^*)| \leq 2k$ and we can apply Theorem \ref{thm_smallX} to deduce that $G(\cT_i)$ and $G(\cT_i^*)$ are both $\Gamma$-rigid. Thus (\ref{eq:rigidity}) holds.

    We can now complete the proof of the theorem.
    Using Lemma \ref{lem_cs_fog}(\ref{cs-g}) to reorder $(\cS_1^+,\ldots,\cS_m^+)$ as necessary, and interchanging 
    $\cT_i^*$ and $\cT_i$ where necessary, we can assume that 
   \begin{equation}
        \label{eqn_nonempty}
        \cW_j \cap (\triangle_{i=1}^{j-1}\cW_i) \neq \emptyset \mbox{ for $2\leq j \leq m$.}
    \end{equation}
The hypothesis that $xx^*\not\in E$ for all $x\in X$ implies that 
\begin{equation}\label{eqn:empty}
\mbox{$U\cap U^*=\emptyset$ for all cliques $U$ of $G(\cS)$.}
\end{equation} 
We can now apply a straightforward induction argument using  (\ref{eq:rigidity}), (\ref{eqn_nonempty}), (\ref{eqn:empty})
    and Theorem \ref{thm_pcs_gluing} to deduce that 
    \begin{equation}
        \label{eqn_rig}
        \text{ 
            $\bigcup_{i=1}^m G(\cW_i)$ and $\bigcup_{i=1}^m G(\cW^*_i)$ are both
        $\Gamma$-rigid.}
    \end{equation}
    
If $\cW_i=\cS_i^+$ for some $1\leq i\leq m$, then  
$\bigcup_{i=1}^m G(\cW_i)$ and $\bigcup_{i=1}^m G(\cW^*_i)$ 
intersect in  at least $2k$ vertices. By Theorem~\ref{thm_pcs_gluing},
$G(\cS) = \left(\bigcup_{i=1}^m G(\cW_i)\right) \cup \left(\bigcup_{i=1}^m G(\cW^*_i)\right)$ is $\Gamma$-rigid
contradicting our choice of $\cS$.

So we can assume that for $i = 1,\dots,m$, $\cS_i^+ = \cT_i \sqcup \cT_i^*$ 
where $\cT_i$ is simplicial $k$-circuit properly contained in 
$\cS_i^+$ and $\cW_i = \cT_i$. If  $\cT_i\cap \cT_j^*\neq \emptyset$ for some distinct $i,j$, 
then $\bigcup_{i=1}^m G(\cT_i)$ and $\bigcup_{i=1}^m G(\cT^*_i)$ intersect on a $(k+1)$-clique in $G(\cS)$. Hence their union is $\Gamma$-rigid by (\ref{eqn:empty}) and the gluing theorem (Theorem~\ref{thm_pcs_gluing}). Since the union is $G(\cS)$ by Lemma~\ref{lem_cs_fog}(b), this contradicts the choice of $\cS$ as a counterexample.

Therefore $\cT_i\cap \cT_j^*=\emptyset$ for all $1\leq i, j\leq m$.
By Lemma~\ref{lem_cs_fog}(e), $\cT_i^*$ contains a simplex $U\in \cS$ that is not contained in any $\cS_j^+$ for $j\neq i$.
This, and the facts that $\cT_i\cap \cT_j^*=\emptyset$ for all $1\leq i,j\leq m$ and $\triangle_{i=1}^{m}\cS_i^+=\cS$, imply that $\triangle_{i=1}^{m}\cT_i$ is a simplicial $k$-cycle that is properly contained in $\cS$, so $\cS$ is not a simplicial $k$-circuit.
Since $\cS$ satisfies (ii) from the statement of the theorem, it follows from Lemma 5.1 that $\cS=\cT  \sqcup \cT^*$, where $\cT=\triangle_{i=1}\cT_i$ and
$|V(\cT)\cap V(\cT^*)|\geq \max\{2(k-t),k+1,2t\} =:c$.
Moreover, also by Lemma \ref{lem_cs_circuits}, $\cT$ must be a simplicial $k$-circuit.

Note that $G(\cT)\subseteq \bigcup_{i=1}^m G(\cT_i)$ and $G(\cT^*)\subseteq \bigcup_{i=1}^m G(\cT_i^*)$,
so $\bigcup_{i=1}^m G(\cT_i)$ and $\bigcup_{i=1}^m G(\cT_i^*)$ also have at least $c$ common vertices.
We can now use
(\ref{eqn_rig}), Theorem \ref{thm_pcs_gluing}, and Lemma~\ref{lem_cs_fog}(b)  to deduce that $G(\cS)$ is $\Gamma$-rigid, contradicting 
our choice of $\cS$. This final contradiction completes the proof of the theorem.
\end{proof}

Theorem \ref{thm_real_main} immediately gives the following sufficient condition for the $\Gamma$-rigidity of $\Z_2$-symmetric  simplicial circuits.

\begin{theorem}
    \label{cor_main_rigid}
    Let $(\cS,*)$ be a $\Z_2$-symmetric simplicial $k$-circuit
with $k\geq 2$. Then $(\cS,*)$ is $\Gamma$-rigid in $\R^{k+1}$ if either $k\geq 3$ or $k=2$ and $\Gamma$ is not the half-turn rotation group.
\end{theorem}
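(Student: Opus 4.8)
The plan is to obtain this as an immediate corollary of Theorem~\ref{thm_real_main}, so the argument will be short. The first step is to check that a $\Z_2$-symmetric simplicial $k$-circuit $(\cS,\ast)$ is a \emph{nontrivial} \csir{$k$}, as required by Theorem~\ref{thm_real_main}. That $(\cS,\ast)$ is a \csir{$k$} was already observed in the text, and it is nontrivial because a trivial \csir{$k$} is not a simplicial $k$-circuit (it properly contains one of its two trivial simplicial $k$-circuit summands). Since $k\geq 2$ by hypothesis, the conditions that Theorem~\ref{thm_real_main} imposes on the complex are all met.

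The second step handles the point group. After an orthogonal change of coordinates we may assume $\Gamma=\Gamma_{t,k+1}$ for some $0\leq t\leq k$; this is harmless, since replacing $\Gamma$ by an orthogonal conjugate and post-composing realisations by the same orthogonal map preserves the $\Gamma$-symmetry constraint and infinitesimal rigidity, hence $\Gamma$-rigidity. If $k\geq 3$, the extra hypothesis of Theorem~\ref{thm_real_main} (that $\Gamma$ is not a rotation group when $k=2$) is vacuous. If $k=2$, then $\Gamma_{1,3}$ is exactly the half-turn rotation group in $\R^3$, so the hypothesis of the corollary that $\Gamma$ is not the half-turn rotation group forces $t\in\{0,2\}$, and in either case $\Gamma_{t,3}$ is not a rotation group. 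Thus $(\cS,\ast)$ and $\Gamma$ satisfy all the hypotheses of Theorem~\ref{thm_real_main}.

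Finally, I would invoke Theorem~\ref{thm_real_main}: statement~(ii) of that theorem holds for $\cS$ — indeed its first alternative holds, as $\cS$ is by assumption a simplicial $k$-circuit — so the equivalent statement~(i) holds, i.e.\ $(G(\cS),\ast)$, equivalently $(\cS,\ast)$, is $\Gamma$-rigid in $\R^{k+1}$. I do not anticipate any genuine obstacle, since all the substance lies in Theorem~\ref{thm_real_main}; the only points deserving a line of care are the verification that $(\cS,\ast)$ is a nontrivial \csir{$k$} and the reduction of an arbitrary order-two point group to the form $\Gamma_{t,k+1}$, together with the identification of the excluded half-turn case with $t=1$ in dimension three.
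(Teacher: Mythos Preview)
Your proposal is correct and matches the paper's approach exactly: the paper states Theorem~\ref{cor_main_rigid} without proof, noting only that it follows immediately from Theorem~\ref{thm_real_main}. Your verification that a $\Z_2$-symmetric simplicial $k$-circuit is a nontrivial \csir{$k$}, together with the reduction to $\Gamma=\Gamma_{t,k+1}$ and the identification of the excluded half-turn case with $t=1$ when $k=2$, is precisely the check needed to invoke condition~(ii) of Theorem~\ref{thm_real_main}.
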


Since every pseudomanifold is a simplicial circuit, \cite[Conjecture 8.3]{KNNZ} follows immediately as a special case of Theorem \ref{cor_main_rigid}.

\section{The Lower Bound Theorem}
\label{sec_lbt}

The lower bound theorem for \csir{$k$}s is an immediate corollary of Theorem \ref{thm_real_main} and Lemma~\ref{lem:symmetric_maxwell}.

\begin{theorem}
    Let $(\cS,*)$ be a nontrivial simplicial \csir{$k$} for some $k \geq 2$.
    Suppose that $G(\cS) - X$ is connected for all $X \subset V$ which satisfy $X^* = X$ and 
    $|X| \leq 2k$. Then $g_2( \cS) = |E(\cS)| - (k+1)|V(\cS)| + \binom{k+2}2 \geq \binom{k+1}2 - (k+1)$.
    \label{thm_KNNZ_conj}
\end{theorem}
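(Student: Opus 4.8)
The plan is to combine the rigidity theorem with the symmetric Maxwell-type bound. First I would observe that the connectivity hypothesis on $G(\cS)$ is exactly condition (iii) of Theorem~\ref{thm_real_main} for the worst case $t=0$ or $t=k$, but in fact we need to be slightly careful: the hypothesis here is that $G(\cS) - X$ is connected whenever $X^*=X$ and $|X| \leq 2k$, and since $\max\{2(k-t),k+1,2t\} \leq 2k$ for all $0 \leq t \leq k$, this implies condition (iii) for every relevant $\Gamma = \Gamma_{t,k+1}$ (excluding the half-turn case $k=2$, $t=1$, which we handle separately below). Hence, for each such $\Gamma$, Theorem~\ref{thm_real_main} gives that $(G(\cS),\ast)$ is $\Gamma$-rigid in $\R^{k+1}$.

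Next I would apply Lemma~\ref{lem:symmetric_maxwell} with $d = k+1$. Since $(G(\cS),\ast)$ is $\Gamma_{t,k+1}$-rigid and $|V(\cS)| \geq 2k+2 = 2d$ by Lemma~\ref{lem_small_cs_circuits}, the lemma yields
\[
|E(\cS)| \geq (k+1)|V(\cS)| - 2\min\left\{\binom{t+1}{2}+\binom{k+1-t}{2},\ \binom{k+2}{2} - \binom{t+1}{2}-\binom{k+1-t}{2}\right\}.
\]
To get the strongest inequality I would choose $t$ to minimise the subtracted quantity. As noted in the remark after Lemma~\ref{lem:symmetric_maxwell}, for $d = k+1 \geq 4$ (i.e. $k \geq 3$) the right side of \eqref{eq:symmetric_maxwell} is maximised at $t = 0$; for $k = 2$ one should instead take $t=0$ or $t=2$ (the reflection/inversion case), avoiding the exceptional half-turn case. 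With $t = 0$ we have $\binom{t+1}{2} = 0$ and $\binom{k+1-t}{2} = \binom{k+1}{2}$, so the first term in the $\min$ is $\binom{k+1}{2}$ and the second is $\binom{k+2}{2} - \binom{k+1}{2} = k+1$. Thus the minimum is $\min\{\binom{k+1}{2}, k+1\}$, which equals $k+1$ whenever $k \geq 2$ (since $\binom{k+1}{2} = \frac{k(k+1)}{2} \geq k+1$ for $k \geq 2$). Therefore $|E(\cS)| \geq (k+1)|V(\cS)| - 2(k+1)$.

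Finally I would rearrange this into the $g_2$ form:
\[
g_2(\cS) = |E(\cS)| - (k+1)|V(\cS)| + \binom{k+2}{2} \geq \binom{k+2}{2} - 2(k+1).
\]
It remains to check the elementary identity $\binom{k+2}{2} - 2(k+1) = \binom{k+1}{2} - (k+1)$: indeed $\binom{k+2}{2} - \binom{k+1}{2} = k+1$, so $\binom{k+2}{2} - 2(k+1) = \binom{k+1}{2} + (k+1) - 2(k+1) = \binom{k+1}{2} - (k+1)$, as required. The main obstacle, such as it is, lies in the bookkeeping of the first step — namely verifying that the stated connectivity hypothesis genuinely implies condition (iii) of Theorem~\ref{thm_real_main} for an admissible choice of $\Gamma$ (one must pick $t \in \{0, k\}$ when $k = 2$ so as to avoid the half-turn exception, and confirm that $2k$ dominates $\max\{2(k-t),k+1,2t\}$ for that choice); everything after that is a direct substitution into Lemma~\ref{lem:symmetric_maxwell} followed by the binomial identity above.
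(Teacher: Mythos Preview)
Your proposal is correct and follows essentially the same approach as the paper: apply Theorem~\ref{thm_real_main} with $\Gamma = \Gamma_{0,k+1}$ (point inversion) to obtain $\Gamma$-rigidity, then invoke Lemma~\ref{lem:symmetric_maxwell} with $t=0$ and $d=k+1$ to get the edge bound, which rearranges to the desired inequality on $g_2$. The paper's proof is terser and commits to $t=0$ from the outset rather than surveying all admissible $t$, but the logical content is the same; your extra care in checking $|V(\cS)| \geq 2(k+1)$ via Lemma~\ref{lem_small_cs_circuits} and verifying the binomial identity is welcome detail that the paper leaves implicit.
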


\begin{proof}
We can apply  Theorem \ref{thm_real_main} in the case when $\Gamma = \Gamma_{0,k+1}$ is a point inversion to deduce that $(G(\cS),*)$ is $\Gamma$-rigid. The case $t=0$ of   Lemma~\ref{lem:symmetric_maxwell} now gives $g_2( \cS) \geq \binom{k+1}2 - (k+1)$.
\end{proof}

A similar argument using Theorem \ref{cor_main_rigid} and Lemma~\ref{lem:symmetric_maxwell} gives
\begin{theorem}
    \label{cor_main}
    Let $(\cS,*)$ be a $\Z_2$-symmetric simplicial $k$-circuit
with $k\geq 2$. Then $g_2(\cS) \geq \binom{k+1}2 -(k+1)$.
\end{theorem}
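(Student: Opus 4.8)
The plan is to read the inequality off directly from two results already at our disposal: the $\Gamma$-rigidity statement of Theorem~\ref{cor_main_rigid} and the symmetric edge-count of Lemma~\ref{lem:symmetric_maxwell}. The one thing that needs a moment's thought is which point group $\Gamma$ to use, since Theorem~\ref{cor_main_rigid} excludes the half-turn rotation group in the case $k=2$. I would sidestep this by always taking $\Gamma=\Gamma_{0,k+1}$, the point group generated by $-I_{k+1}$; this is a point inversion and so is never the half-turn rotation group, and it also corresponds to the parameter value $t=0$, which is exactly the value that makes the bound in (\ref{eq:symmetric_maxwell}) strongest.

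First I would note that $(\cS,\ast)$ is a nontrivial \csir{$k$}: since $\ast$ is a free simplicial involution, no facet $U\in\cS$ can satisfy $U\cap U^*\neq\emptyset$ (a common vertex would produce a $\ast$-invariant edge of $\cS$), so $\cS$ is not a trivial simplicial $k$-circuit, and moreover Lemma~\ref{lem_small_cs_circuits} gives $|V(\cS)|\geq 2k+2$. Applying Theorem~\ref{cor_main_rigid} with $\Gamma=\Gamma_{0,k+1}$ then shows that $(G(\cS),\ast)$ is $\Gamma$-rigid in $\R^{k+1}$ for every $k\geq 2$. Next I would apply Lemma~\ref{lem:symmetric_maxwell} with $d=k+1$ and $t=0$; its hypothesis $|V(\cS)|\geq 2d$ is the vertex count just recorded. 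With $t=0$ one has $\binom{t+1}{2}=0$ and $\binom{d-t}{2}=\binom{k+1}{2}$, so the minimum appearing in (\ref{eq:symmetric_maxwell}) is
\[
\min\left\{\binom{k+1}{2},\ \binom{k+2}{2}-\binom{k+1}{2}\right\}=\min\left\{\binom{k+1}{2},\ k+1\right\}=k+1,
\]
the last equality because $\binom{k+1}{2}\geq k+1$ for all $k\geq 2$. Hence $|E(\cS)|\geq (k+1)|V(\cS)|-2(k+1)$.

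Finally, substituting this into the definition of $g_2$ and using $\binom{k+2}{2}=\binom{k+1}{2}+(k+1)$ gives
\[
g_2(\cS)=|E(\cS)|-(k+1)|V(\cS)|+\binom{k+2}{2}\ \geq\ \binom{k+2}{2}-2(k+1)\ =\ \binom{k+1}{2}-(k+1),
\]
as required. I do not expect any genuine obstacle in this argument: all of the substantive work is carried by Theorem~\ref{cor_main_rigid} and Lemma~\ref{lem:symmetric_maxwell}, which we may assume, and what remains is purely bookkeeping --- choosing the point-inversion group to avoid the $k=2$ half-turn exception, verifying $|V(\cS)|\geq 2(k+1)$, and evaluating the minimum.
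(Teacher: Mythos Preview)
Your proposal is correct and follows essentially the same route as the paper: apply Theorem~\ref{cor_main_rigid} with the point-inversion group $\Gamma=\Gamma_{0,k+1}$ to obtain $\Gamma$-rigidity, and then read off the edge inequality from the $t=0$ case of Lemma~\ref{lem:symmetric_maxwell}. The paper's own proof is just the one-line remark ``A similar argument using Theorem~\ref{cor_main_rigid} and Lemma~\ref{lem:symmetric_maxwell} gives\ldots'', so your write-up simply makes explicit the bookkeeping (the vertex count $|V(\cS)|\geq 2k+2$ and the evaluation of the minimum as $k+1$) that the paper leaves implicit.
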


Since every pseudomanifold is a simplicial circuit, this immediately gives the following lower bound result for pseudomanifolds, and hence verifies the inequality part of \cite[Conjecture 8.1]{KNNZ} 

\begin{corollary}
    \label{cor_main_2}
    Let $(\cS,*)$ be a  $\Z_2$-symmetric 
    $k$-pseudomanifold 
    with $k \geq 2$. Then $g_2(\cS) \geq \binom{k+1}2 - (k+1)$.
\end{corollary}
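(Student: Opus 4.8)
The plan is to deduce the corollary directly from Theorem~\ref{cor_main}, so the only thing that needs verifying is the (classical) fact that every $k$-pseudomanifold is a simplicial $k$-circuit in the sense of Section~\ref{sec_bk}. Recall that a $k$-pseudomanifold $\cS$ is a pure $k$-dimensional simplicial complex that is strongly connected and in which every $(k-1)$-face is contained in exactly two $k$-faces. First I would observe that this last condition says precisely that $\partial\cS=\emptyset$, so that $\cS$ is a simplicial $k$-cycle; being a complex it has no repeated facets, so it is a simplicial $k$-complex in the paper's terminology.

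Next I would check minimality. Suppose $\emptyset\neq\cU\subsetneq\cS$ is a simplicial $k$-cycle. Then $\cS\sm\cU$ is nonempty and, by the identity $\partial(\cS\sm\cU)=\partial\cS$ recorded in Section~\ref{sec_bk}, it is also a simplicial $k$-cycle. Pick facets $U\in\cU$ and $W\in\cS\sm\cU$ and use strong connectivity to join them by a sequence of $k$-faces $U=W_0,W_1,\dots,W_\ell=W$ with $|W_i\cap W_{i+1}|=k$; some consecutive pair then has $W_i\in\cU$ and $W_{i+1}\in\cS\sm\cU$. Setting $F=W_i\cap W_{i+1}$, the $(k-1)$-face $F$ lies in at least one facet of $\cU$ and, since $\partial\cU=\emptyset$, hence in at least two facets of $\cU$; symmetrically $F$ lies in at least two facets of $\cS\sm\cU$. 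As these facet sets are disjoint, $F$ lies in at least four facets of $\cS$, contradicting the pseudomanifold condition. Hence no such $\cU$ exists and $\cS$ is a simplicial $k$-circuit.

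Finally, if $(\cS,*)$ is a $\Z_2$-symmetric $k$-pseudomanifold then $*$ is a free simplicial involution on the simplicial $k$-circuit $\cS$, so $(\cS,*)$ is a $\Z_2$-symmetric simplicial $k$-circuit and Theorem~\ref{cor_main} applies verbatim, giving $g_2(\cS)\geq\binom{k+1}{2}-(k+1)$. I do not anticipate any genuine obstacle here: the mathematical content lies entirely in Theorem~\ref{cor_main} (hence ultimately in the rigidity result Theorem~\ref{thm_real_main}), and the pseudomanifold-to-circuit reduction above is standard — it is exactly the observation that lets Fogelsanger's argument recover the ordinary Lower Bound Theorem for pseudomanifolds.
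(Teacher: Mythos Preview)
Your proposal is correct and follows exactly the paper's approach: the paper states the corollary immediately after Theorem~\ref{cor_main} with the one-line justification ``since every pseudomanifold is a simplicial circuit,'' and you have simply supplied the standard verification of that fact. There is nothing to add.
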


\section{Closing Remarks}\label{sec_conclusion}

\subsection{Characterising $\Gamma_{1,3}$-rigid simplicial $2$-circuits}

Theorem \ref{thm_real_main} resolves the $\Gamma_{t,k+1}$-rigidity 
question for $\Z_2$-symmetric simplicial $k$-circuits in $\R^{k+1}$, except in the case when $t=1$ and  $k=2$.
In this case,  Lemma~\ref{lem:symmetric_maxwell} implies that the boundary complex of a $\Z_2$-symmetric simplicial polyhedron cannot be 
$\Gamma_{1,3}$-rigid in $\R^3$. Other examples  can be obtained from the boundary complex $\cP$ of a $\Z_2$-symmetric simplicial polyhedron by choosing a face $F$ of $\cP$ and `inserting'  an arbitrary simplicial $2$-circuit $\cT$ into both $F$ and $F^*$.  More precisely we chose a  copy $\cT^*$  of $\cT$, label the vertices of $\cT$ so that $F\in \cT$ and  $V(\cP)\cap V(\cT)=F$,  and then put $\cS=\cP\triangle (\cT\sqcup \cT^*)$.

We believe that all examples of  $\Z_2$-symmetric simplicial $2$-circuits which are not $\Gamma_{1,3}$-rigid in $\R^{3}$ can be obtained by recursively applying this insertion operation to the boundary complex of a $Z_2$-symmetric simplicial polyhedron.
This would imply the following conjecture.

\begin{conjecture}
    \label{conj_2_circuits}
    Suppose that $(\cS,*)$ is a $\mathbb Z_2$-symmetric simplicial
    2-circuit such that $G(\cS)$ is 4-connected and non-planar. Then $G(\cS)$ is $\Gamma_{1,3}$-rigid in $\R^3$.
\end{conjecture}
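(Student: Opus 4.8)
\emph{Sketch of a proposed proof.} The plan is to argue by induction on $|V(\cS)|$, following the architecture of the proof of Theorem~\ref{thm_real_main} — pick an edge $xy$ with $xy^*\notin E$, take a $\Z_2$-symmetric Fogelsanger decomposition at $xy$, make each piece rigid, and glue the pieces back using Theorem~\ref{thm_pcs_gluing} — but carrying the hypotheses ``$G(\cS)$ is $4$-connected'' and ``$G(\cS)$ is non-planar'' along the induction. Two features of the regime $t=1$, $k=2$ make this harder than the cases settled in Theorem~\ref{thm_real_main}. First, Lemma~\ref{lem:symmetric_maxwell} gives only the tighter bound $|E(\cS)|\ge 3|V(\cS)|-4$, so the Gluing Theorem now needs overlaps of a prescribed size rather than the more generous $|V_1\cap V_2|\ge d$ of part (b); one must check that the overlaps produced by a $\Z_2$-symmetric Fogelsanger decomposition (Lemma~\ref{lem_cs_fog}(\ref{cs-g}) together with the clique structure of the added faces) are still large enough. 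Second, and more seriously, Theorem~\ref{thm_smallX} genuinely fails here — the Bricard octahedron is the obstruction — so one cannot, as in the general argument, fall back on the ``$|X|\le 2k$'' case to dispose of pieces of the form $\cT\sqcup\cT^*$; a different device is needed for such pieces.

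Before the induction I would record the necessary conditions. Since $*$ is free every $*$-invariant vertex set has even size, so $4$-connectivity is needed precisely to rule out $*$-invariant separators of size $2$ (which would create a rotational flex about the line through their images in a half-turn-symmetric realisation), while Lemma~\ref{lem:symmetric_maxwell} supplies the sparsity necessary condition; the substance of the conjecture is that $4$-connectivity together with non-planarity is sufficient. For the base cases one determines the finitely many minimal $\Z_2$-symmetric simplicial $2$-circuits whose graph is $4$-connected and non-planar and exhibits for each an explicit half-turn-symmetric realisation with full-rank rigidity matrix (a finite computation, or a coning argument in the spirit of Lemma~\ref{lem:cross}). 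It is worth noting that $G(\cB_2)$, the octahedron graph, is $4$-connected but planar, and so is correctly excluded — consistent with its being the Bricard example.

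For the inductive step I would first note a simplification peculiar to this regime: the case of Claim~\ref{clm:edge} in which $x^*y\in E$ whenever $xy\in E$ forces each $G[U\sqcup U^*]$ ($U\in\cS$) to be a copy of $G(\cB_2)$, hence forces $G(\cS)$ to be planar, so under our hypothesis this case does not occur and we may directly choose $xy$ with $xy^*\notin E$. Forming a $\Z_2$-symmetric Fogelsanger decomposition $(\cS_1^+,\dots,\cS_m^+)$ at $xy$, the key point becomes showing that every piece can be made ``rigid enough'' to glue. If $\cS_i'=(\cS_i^+/xy)/x^*y^*$ is again $4$-connected and non-planar and has fewer vertices, the induction hypothesis applies and, as in Theorem~\ref{thm_real_main}, Theorem~\ref{thm_whiteley_pcs_contraction} transfers $\Gamma_{1,3}$-rigidity up to $\cS_i^+$; if $\cS_i'$ is planar or has a small separator, the associated subgraph of $G(\cS)$ need not be $\Gamma_{1,3}$-rigid on its own, and one must instead identify its (low-dimensional) space of symmetric and anti-symmetric infinitesimal flexes explicitly and show these cannot extend to a flex of the $4$-connected non-planar whole. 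This calls for a quantitative refinement of Theorem~\ref{thm_pcs_gluing} that tracks the residual flex space of each piece rather than merely its rigidity, together with a counting argument exploiting $|E(\cS)|\ge 3|V(\cS)|-4$ and the non-planarity of $G(\cS)$.

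The main obstacle is exactly the propagation of planarity and $4$-connectivity through the decomposition: edge contraction preserves planarity, but the pieces $\cS_i^+$ acquire new cliques that can create or destroy non-planarity, and there is no clean statement that a symmetric vertex split preserves non-planarity, so neither hypothesis is obviously inherited by the pieces. I expect the cleanest way around this is to prove, as a separate structural result, that every $\Z_2$-symmetric simplicial $2$-circuit that is \emph{not} $\Gamma_{1,3}$-rigid arises from the boundary complex of a $\Z_2$-symmetric simplicial polyhedron by recursively performing the face-insertion operation $\cS\mapsto\cS\triangle(\cT\sqcup\cT^*)$ described above; since every complex built that way is planar, this would immediately yield the conjecture (indeed a full characterisation) once one checks that $4$-connectivity excludes the remaining low-connectivity boundary-complex-based examples. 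A more analytic alternative would be to work directly with the symmetry-adapted (orbit) rigidity matrix of a half-turn-symmetric realisation, whose two blocks have deficiencies $2$ and $4$, and to run a Fogelsanger-style induction block by block; there the planarity obstruction should reappear as a failure of the symmetric block to attain full rank, and isolating that phenomenon combinatorially is, I believe, the crux of the problem.
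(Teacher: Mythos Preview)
The statement is labelled a \emph{conjecture} in the paper and appears in the closing remarks as an open problem; the authors give no proof. So there is no paper argument to compare against, and the relevant question is simply whether your outline can be completed to a proof.

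As it stands it cannot. You correctly locate the central obstruction --- neither ``$4$-connected'' nor ``non-planar'' is inherited by the pieces $\cS_i^+$ of a $\Z_2$-symmetric Fogelsanger decomposition or by their contractions $\cS_i'$ --- but you do not overcome it: the two workarounds you propose (first prove the structural classification of all non-$\Gamma_{1,3}$-rigid $\Z_2$-symmetric $2$-circuits, or run a block-by-block Fogelsanger induction on the orbit rigidity matrix) are themselves open programmes, and the first is in fact the stronger statement from which the conjecture is meant to follow, not a lemma on the way to it. Two further specific gaps deserve mention. Your claim that the degenerate case ``$x^*y\in E$ whenever $xy\in E$'' forces $G(\cS)$ to be planar is unjustified: that hypothesis makes each pair $x,x^*$ a pair of non-adjacent twins in $G(\cS)$, so $G(\cS)$ is obtained from the quotient graph $H=G(\cS)/\ast$ by replacing every vertex with two non-adjacent copies, and such a graph is non-planar as soon as $H\supseteq K_4$; you would need a separate structural lemma about simplicial $2$-circuits to exclude this, and none is available in the paper. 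Second, because the inductive step only applies when the contracted piece again satisfies the hypotheses, the ``base case'' is not a finite list of smallest $4$-connected non-planar examples but rather \emph{every} piece whose contraction fails to be $4$-connected or non-planar --- handling those is exactly the propagation problem again, not a finite computation. Your proposal is a fair diagnosis of why the case $(t,k)=(1,2)$ is left open, but it does not supply the missing idea.
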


\subsection{Redundant $\Gamma$-rigidity of simplicial circuits}
A graph $G=(V,E)$ is {\em redundantly rigid} in $\R^d$ if $G-e$ is rigid in $\R^d$ for all $e\in E$. We showed in \cite{CJT1} that, if $G$ is the graph of a simplicial $k$-circuit for some $k\geq 3$ and $G$ is $(k+1)$-connected,  then $G$ is redundantly rigid in $\R^{k+1}$, and used this to deduce that equality in the Lower Bound Theorem can only hold for simplicial $k$-circuits when they are stacked $k$-spheres. We believe that an analogous approach may work for $\Gamma$-rigidity and our symmetric lower bound theorem.

\begin{conjecture}
    \label{con:red_rigid}
    Let $(\cS,*)$ be a $\Z_2$-symmetric simplicial $k$-circuit 
    such that $k\geq 3$, $\cS\neq \cB_k$ and $G(\cS)$ is $(k+1)$-connected. Then, for all point groups $\Gamma$ in $\R^{k+1}$ of order two and all $e\in E(\cS)$, $(G(\cS)-e-e^*,*)$ is $\Gamma$-rigid in $\R^{k+1}$.
\end{conjecture}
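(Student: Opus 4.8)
The plan is to adapt the argument of \cite{CJT1}, which establishes redundant rigidity of $(k+1)$-connected simplicial $k$-circuits, to the $\mathbb{Z}_2$-symmetric setting, replacing the ordinary Fogelsanger decomposition by its $\mathbb{Z}_2$-symmetric analogue (Lemma~\ref{lem_cs_fog}) and Whiteley's vertex-splitting lemma by the symmetric versions developed in Section~\ref{sec_pre_rigidity} (Lemma~\ref{lem_whiteley_pcs_contraction2} and Theorem~\ref{thm_whiteley_pcs_contraction}). Fix a point group $\Gamma$ of order two and an edge $e=xy\in E(\cS)$; we must produce an infinitesimally rigid $\Gamma$-symmetric realisation of $(G(\cS)-e-e^*,*)$ in $\R^{k+1}$, and we proceed by induction on $|V(\cS)|$.

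A necessary preliminary is the symmetric analogue of the tightness of the crosspolytope: one should show that, among $(k+1)$-connected $\mathbb{Z}_2$-symmetric simplicial $k$-circuits with $k\geq 3$, $\cB_k$ is the unique one with $g_2=\binom{k+1}{2}-(k+1)$, and more precisely that every $\cS\neq\cB_k$ satisfying the hypotheses has $g_2(\cS)\geq\binom{k+1}{2}-(k+1)+2$ with the two extra edges deletable. This is also what makes the statement plausible, since by Lemma~\ref{lem:symmetric_maxwell} (case $t=0$, which is the binding one since $k+1\geq 4$) a $\Gamma$-rigid $(G(\cS)-e-e^*,*)$ forces $|E(\cS)|-2\geq (k+1)|V(\cS)|-2(k+1)$. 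These considerations, together with Lemma~\ref{lem:cross}, should dispose of the small base cases.

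For the inductive step we mimic the proof of Theorem~\ref{thm_real_main}. Choose an edge $uv\in E(\cS)$ with $uv^*\notin E(\cS)$, taking care --- using $(k+1)$-connectivity and the assumption that $|V(\cS)|$ is large --- that $\{u,v,u^*,v^*\}$ is disjoint from $\{x,y,x^*,y^*\}$, so that $e$ and $e^*$ are untouched by the contractions of $uv$ and $u^*v^*$. Form a $\mathbb{Z}_2$-symmetric Fogelsanger decomposition $(\cS_1^+,\dots,\cS_m^+)$ of $\cS$ at $uv$, with $e,e^*\in E(\cS_i^+)$. If $m\geq 2$, then $|V(\cS_j^+)|<|V(\cS)|$ for all $j$: apply the inductive hypothesis to $\cS_i^+$ to obtain that $(G(\cS_i^+)-e-e^*,*)$ is $\Gamma$-rigid, and Theorem~\ref{thm_real_main} to the remaining pieces to obtain that they are $\Gamma$-rigid, then glue as in the final part of the proof of Theorem~\ref{thm_real_main}, using Lemma~\ref{lem_cs_fog}(\ref{cs-g}), (\ref{eqn:empty}) and the Gluing Theorem (Theorem~\ref{thm_pcs_gluing}); the only absent edges are $e$ and $e^*$. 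If $m=1$, then $(\cS/uv)/u^*v^*$ is a smaller \csir{$k$}; remove $e,e^*$ there by induction and restore the contracted edges using the two-vertex symmetric splitting of Theorem~\ref{thm_whiteley_pcs_contraction}, whose hypotheses are verified exactly as in Claim~\ref{claim_2}.

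The main obstacle is the one already present in \cite{CJT1}: a Fogelsanger piece $\cS_j^+$ that is itself a crosspolytope, or is of the form $\cT\sqcup\cT^*$, carries no redundancy, so the inductive hypothesis is vacuous for it and $e$ cannot be routed through such a piece --- note that $\cB_k-e-e^*$ is not $\Gamma$-rigid, which is exactly why $\cS\neq\cB_k$ is needed. Overcoming this will require a structural analysis showing either that $uv$ can always be chosen so that the piece containing $e$ is non-degenerate and inherits enough connectivity, or that a degenerate piece forces global structure incompatible with $G(\cS)$ being $(k+1)$-connected. Carrying out the crosspolytope tightness statement above, propagating $(k+1)$-connectivity --- or a workable proxy such as a bound on the size of $\ast$-invariant vertex separators, in the spirit of condition (iii) of Theorem~\ref{thm_real_main} --- through the $\mathbb{Z}_2$-symmetric Fogelsanger decomposition, and reconciling this with the demand that $e,e^*$ survive the contraction, is where essentially all the work lies.
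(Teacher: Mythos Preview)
The statement you are addressing is Conjecture~\ref{con:red_rigid}, which the paper explicitly leaves open; there is no proof in the paper to compare your attempt against. What you have written is not a proof but a programme, and to your credit you say as much in the final paragraph.

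As a programme it is the natural one, and it matches what the authors themselves have in mind when they write that ``an analogous approach may work for $\Gamma$-rigidity''. The skeleton --- symmetric Fogelsanger decomposition, symmetric vertex splitting, gluing --- is correct. But several steps you present as routine are not. Your assertion that ``if $m\geq 2$ then $|V(\cS_j^+)|<|V(\cS)|$ for all $j$'' is not a consequence of Lemma~\ref{lem_cs_fog}; a Fogelsanger piece can span the full vertex set even when $m\geq 2$ (compare the handling of the case $V_1=V$ in the proof of Theorem~\ref{thm_smallX}), so you must pass to the contraction to obtain a strictly smaller object, which then forces you through the splitting argument in that branch as well.

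The decisive gap is the one you flag yourself. The inductive hypothesis demands that the smaller object be a $(k+1)$-connected $\mathbb{Z}_2$-symmetric simplicial $k$-circuit different from $\cB_k$. None of these three properties is preserved by $\mathbb{Z}_2$-symmetric Fogelsanger decomposition or by symmetric contraction: a piece may be $\cB_k$, may be a \csir{$k$} of the form $\cT\sqcup\cT^*$ rather than a simplicial $k$-circuit, and will typically lose connectivity. You have no mechanism for any of these, and ``where essentially all the work lies'' understates the matter: this \emph{is} the conjecture. In the non-symmetric theorem of \cite{CJT1} the corresponding difficulty already required substantial structural input beyond the Fogelsanger machinery, and here the $\cT\sqcup\cT^*$ degeneracy is an additional obstruction with no analogue in that setting. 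Your outline is a sensible plan of attack, but it does not close the gap that makes this a conjecture rather than a theorem.
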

Conjecture \ref{con:red_rigid} would imply that equality can only hold  in Theorem \ref{cor_main} when $\cS$ is a symmetrically stacked sphere (see \cite{KNNZ} for a definition)  and would verify \cite[Conjecture 8.1]{KNNZ} as a special case.

\subsection{Non-adjacent vertex pairings}
The Bricard octahedron shows that
Theorem \ref{thm_smallX} will become false if we remove the hypothesis that 
$|X| \leq 2k$. More generally, any $\Z_2$-symmetric  boundary complex of a polyhedron will not be $\Gamma$-rigid in $\R^3$ when $\Gamma$ is the half turn rotation group by Lemma~\ref{lem:symmetric_maxwell}.  
It is conceivable, however, that the hypothesis 
that $|X| \leq 2k$ 
 is not necessary in certain special cases. 
Indeed \cite[Conjecture 8.4]{KNNZ} is precisely this 
statement in the case when $k=2$, $\cS$ is a simplicial sphere and $\Gamma$ is a point inversion group.  

\subsection{More general group actions}
Adin~\cite{A95}, and later Jorge~\cite{J03}, extended Stanley's lower bound theorem to rational polytopes with a cyclic linear symmetry group of prime power order. Their methods are very different from ours, relying on 
algebraic properties of the Stanley-Reisner ring of a simplicial complex.

It is natural to ask if the rigidity-based approach can be used to extend these results to simplicial circuits and/or other point groups.
In this paper we frequently use the fact that $*$ is free and $\Gamma$ is a point group of order two. Relaxing either of these restrictions in a rigidity-based approach remains an open problem.


\subsection{The lower bound conjecture for higher dimensional faces}

Let $\phi_j(n,k)$ be the number of $j$-faces of a symmetrically stacked
$k$-sphere with $n$ vertices. One can easily verify by induction on $n$ that 
\[
    \phi_j(n,k) = \left\{ \begin{array}{ll} 
    2^{j+1}\binom{k+1}{j+1} + (n-2k-2)\binom{k+1}{j} & \text{ if } j \leq k-1 \\
    2^{k+1}+ k(n-2k-2) & \text{ if }j =k
    \end{array}\right.
\]

For a simplicial complex $\cS$ let $f_j(\cS)$ be the number of $j$-faces of
$\cS$.

\begin{conjecture}
    \label{conj_higher}
    Suppose that $k\geq 2$ and $\cS$ is 
    a simplicial $k$-circuit with $n$ vertices and $*$ is a free simplicial 
    involution on $\cS$. Then $f_j(\cS) \geq \phi_n(n,k)$ for $j = 1,\dots,k$.
    Moreover, if equality occurs for any $j$ then $\cS$ must be a $k$-sphere and,
    in the case that $k\geq 3$, must be a symmetrically stacked $k$-sphere.
\end{conjecture}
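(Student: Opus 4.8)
The plan is to follow the template that establishes the Lower Bound Theorem for simplicial $k$-circuits in the non-symmetric setting (Kalai, Tay, and \cite{CJT1}): first deduce the face inequalities $f_j(\cS)\ge\phi_j(n,k)$ from the edge inequality by a McMullen--Perles--Walkup type reduction, then treat the equality cases via a redundant-rigidity argument. The edge inequality is already in hand: rewriting $g_2(\cS)\ge\binom{k+1}{2}-(k+1)$ with $\binom{k+2}{2}-\binom{k+1}{2}=k+1$ shows that Theorem~\ref{cor_main} is exactly $f_1(\cS)\ge(k+1)(n-2)=\phi_1(n,k)$, and the role of redundant rigidity will be played by Conjecture~\ref{con:red_rigid}. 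So the genuinely new ingredient must be a version of the reduction that is sensitive to the involution $\ast$.

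Two steps are routine. The facet bound $j=k$ follows from the ridge bound $j=k-1$: every ridge of $\cS$ lies in at least two (in fact an even number of) facets, so $(k+1)f_k(\cS)\ge 2f_{k-1}(\cS)$, and an elementary identity gives $\tfrac{2}{k+1}\phi_{k-1}(n,k)=\phi_k(n,k)$. The base case of an induction on $|V(\cS)|$ is $n=2k+2$, where by Lemma~\ref{lem_small_cs_circuits} a \csir{$k$} is trivial or isomorphic to $\cB_k$; since $\cS$ is a circuit it is $\cB_k$, and $f_j(\cB_k)=2^{j+1}\binom{k+1}{j+1}=\phi_j(2k+2,k)$ for $1\le j\le k-1$ and $f_k(\cB_k)=2^{k+1}=\phi_k(2k+2,k)$, so the base case holds with equality. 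The induction step should mimic the proof of Theorem~\ref{thm_real_main}: choose $xy\in E(\cS)$ with $xy^\ast\notin E(\cS)$ (possible by the argument of Claim~\ref{clm:edge}), form a $\mathbb{Z}_2$-symmetric Fogelsanger decomposition $(\cS_1^+,\dots,\cS_m^+)$ at $xy$, apply the inductive hypothesis to the smaller complexes $\cS_i'=(\cS_i^+/xy)/x^\ast y^\ast$ (and, when some $\cS_i^+$ has the form $\cT\sqcup\cT^\ast$ with $|V(\cT)\cap V(\cT^\ast)|\le 2k$, use the non-symmetric Lower Bound Theorem on $\cT$ together with the overlap condition of Theorem~\ref{thm_real_main}(ii)), and finally reassemble these bounds for $\cS$ using Lemma~\ref{lem_cs_fog} to account for the faces created and destroyed by the decomposition.

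I expect this reassembly -- a \emph{symmetric} McMullen--Perles--Walkup reduction -- to be the main obstacle. The classical reduction, via $(j+1)f_j(\cS)=\sum_{v}f_{j-1}(\mathrm{lk}_\cS(v))$ together with the non-symmetric Lower Bound Theorem applied to each $(k-1)$-cycle link and $\sum_v\deg_\cS(v)=2f_1(\cS)\ge2\phi_1(n,k)$, is \emph{not} enough here: it recovers $f_j(\cS)\ge\phi_j(n,k)$ only when $k=3$, and for every $k\ge4$ it falls short of $\phi_j(n,k)$. The reason is structural: the vertex links of a symmetrically stacked $k$-sphere are crosspolytopes $\cB_{k-1}$ (with ordinary stacking attached), and $g_2(\cB_{k-1})=\binom{k}{2}-k>0$ for $k\ge4$, so the non-symmetric bound on the links is not tight at the extremal complexes, and the extra slack $\varepsilon_k:=\binom{k+1}{2}-(k+1)$ in the edge bound must be made to \emph{grow} as it is pushed up through the skeleta. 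Making this precise -- whether by propagating a ``crosspolytope-type'' structure through the links, by bounding $\sum_v g_2(\mathrm{lk}_\cS(v))$ below by its value at symmetrically stacked spheres, or by a symmetric higher-rigidity argument at the level of the $j$-skeleton -- has no counterpart in the literature and is where the real work lies.

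For the equality statement: if $f_j(\cS)=\phi_j(n,k)$ for some $j$, then tracing equality back through the reduction should force $f_1(\cS)=\phi_1(n,k)$, i.e.\ equality in Theorem~\ref{cor_main}, together with extremality of every step of the induction. The classification of that equality case -- that $\cS$ is obtained from $\cB_k$ by a sequence of symmetric stellar subdivisions, i.e.\ is a symmetrically stacked $k$-sphere -- is exactly what Conjecture~\ref{con:red_rigid} is designed to supply, in the same way that redundant rigidity was used in \cite{CJT1} for non-symmetric circuits; so, with current technology, the $k\ge3$ part of the equality statement is conditional on Conjecture~\ref{con:red_rigid}. For $k=2$ the $\Gamma_{1,3}$-rigidity needed for this route is unavailable (the half-turn phenomenon intervenes), and one should expect only the weaker conclusion that $\cS$ is a $2$-sphere, to be obtained by a separate combinatorial argument.
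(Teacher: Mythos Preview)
The statement you are attempting to prove is \emph{Conjecture}~\ref{conj_higher}: the paper does not prove it and presents it explicitly as open. So there is no ``paper's own proof'' to compare against; the relevant comparison is between the obstacles you identify and the obstacles the authors identify in the paragraphs following the conjecture.

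On that score your diagnosis is accurate and in fact sharper than the paper's. The authors observe that the classical reduction from $j=1$ to higher $j$ (via links, as in Kalai's argument for manifolds) breaks down here for two reasons: links are not $\mathbb{Z}_2$-symmetric (and the union $\mathrm{lk}(F)\cup\mathrm{lk}(F^*)$ need not be a manifold), and simplicial circuits are not link-closed. You have rediscovered the first of these quantitatively --- your computation that the naive link bound falls short of $\phi_j(n,k)$ for $k\ge4$ because $g_2(\cB_{k-1})>0$ is exactly the symptom of the links failing to carry the symmetry --- and your proposed workaround via the $\mathbb{Z}_2$-symmetric Fogelsanger decomposition is a reasonable alternative line of attack that the paper does not discuss. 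But you correctly flag that the ``reassembly'' step, controlling how $j$-faces are created and destroyed under the decomposition and contractions for $j\ge2$, is the missing ingredient, and nothing in the paper supplies it either.

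Your treatment of the equality case is also honest: you make it conditional on Conjecture~\ref{con:red_rigid}, which the paper likewise leaves open and explicitly ties to the equality characterisation. In short, your proposal is not a proof and does not pretend to be one; the gaps you name are real, they coincide with (and refine) the difficulties the authors themselves record, and the conjecture remains open.
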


In the non-symmetric setting the analogous conjecture for simplicial manifolds
can be derived from 
the case $j=1$ by an induction argument based on the fact that the link of
any proper face in a simplicial manifold is a simplicial sphere of lower dimension (see \cite{kalai} for details). However in the $\mathbb Z_2$-symmetric case this argument does not generalise easily, since the 
the link of $F$ is not $\mathbb Z_2$-symmetric.
One might consider the union of the links of $F$ and $F^*$
but that is not necessarily a simplicial manifold, even when $\cS$ is a simplicial sphere. 

A further difficulty for Conjecture \ref{conj_higher}, and indeed for the 
non-symmetric version \cite[Conjecture 8.]{CJT1}, arises  from the fact that the class of simplicial circuits is not link closed. In general the link of a $j$-face in a simplicial $k$-circuit will be a simplicial $(k-j-1)$-cycle but not necessarily a simplicial $(k-j-1)$-circuit.

\printbibliography

\end{document}